\newtheorem{thm}{Theorem}[section]
\newtheorem{lem}[thm]{Lemma}
\newtheorem{cor}[thm]{Corollary}
\newtheorem{prop}[thm]{Proposition}
\newtheorem{conj}[thm]{Conjecture}
\newtheorem{ques}[thm]{Question}
\newtheorem{hyp}[thm]{Hypothesis}
\newtheorem{defn}[thm]{Definition}
\newtheorem{ex}[thm]{Example}
\newtheorem{rmk}[thm]{Remark}
\newcommand{\isom}{\cong}
\newcommand{\ins}{\subset}
\newcommand{\Ker}{\operatorname{Ker}}
\newcommand{\sprt}{\operatorname{supp}}
\newcommand{\Frac}{\operatorname{Frac}}
\newcommand{\inj}{\xhookrightarrow{}}
\newcommand{\residue}{\kappa}
\newcommand{\nonneg}{\mathbb{R}_{\geq 0}}
\newcommand{\Gammanonneg}{\Gamma_{\geq 0}}
\newcommand{\contposreal}{C(\mathbb{R}_{\geq 0}, \nonneg)}
\newcommand{\contu}{C(U, \nonneg)}
\newcommand{\leg}{\mathscr{L}}
\newcommand{\newt}{\mathscr{N}}
\newcommand{\LN}{\leg\!\newt}
\newcommand{\res}{\operatorname{res}}
\newcommand{\restr}[2]{\res_{#1}^{#2 \leq} }
\newcommand{\restrint}[1]{\res_{#1} }
\newcommand{\ebox}{\epsilon^{\text{box}}}
\newcommand{\ebaar}{\epsilon^{\text{bar}}}
\newcommand{\dbox}{\delta^{\text{box}}}
\newcommand{\dbaar}{\delta^{\text{bar}}}
\newcommand{\tateone}[1]{\mathcal{O}_{#1}\langle x^{1 / p^{\infty}}\rangle}
\newcommand{\malcevt}{[[t^{\Gammanonneg}]]}
\newcommand{\malcevtrat}{((t^{\Gamma}))}
\newcommand{\malcevp}{[[p^{\Gammanonneg}]]}
\newcommand{\malcevprat}{((p^{\Gamma}))}
\newcommand{\coperfx}{[x^{1 / p^{\infty}}]}
\newcommand{\coperfxpow}{[[x^{1 / p^{\infty}}]]}
\newcommand{\puiseuxxrat}{((x^{1 / \mathbb{N}}))}
\newcommand{\puiseuxx}{[[x^{1 / \mathbb{N}}]]}
\newcommand{\argnorm}{\operatorname{argnorm}}
\newcommand{\supp}{\operatorname{supp}}
\title{The perfectoid Tate algebra
has uncountable Krull dimension}
\author{Jack J Garzella}
\begin{document}

\maketitle

\begin{abstract}
Let \(K\) be a perfectoid field with 
pseudo-uniformizer \(\pi\).
We adapt an argument of Du in 
\cite{DuUncountable} to show that the perfectoid
Tate algebra \(K\langle x^{1 / p^{\infty}} \rangle\) has an uncountable chain
of distinct prime ideals.
First, we conceptualize Du's argument,
defining the notion of a 
\textit{Newton polygon formalism}
on a ring.
We prove 
a version of Du's theorem in 
the prescence of a sufficiently nondiscrete
Newton polygon formalism.
Then, we apply our framework to the perfectoid
Tate algebra via a "nonstandard" Newton polygon
formalism (roughly, the roles of the series
variable \(x\) and the pseudo-uniformizer \(\pi\) 
are switched).
We conclude a similar statement for multivatiate
perfectoid Tate algebras using the one-variable
case.
\end{abstract}

\section{Introduction}

Perfectoid rings have been extremely useful tools since 
their introduction by Scholze in \cite{ScholzePerfSpaces},
leading to many advances in 
the mixed characteristic theory of
commutative algebra 
(see \cite{AndreDirectSummand}, \cite{MaSchwedeBCM}, \cite{BhattIyengarMa}),
\(p\)-adic Hodge theory (\cite{ScholzeSurvey}, \cite{ScholzeHodgeRigid}, \cite{KedlayaLiu1})
and the Langlands program (e.g. \cite{CarianiScholze1}, \cite{ScholzeTorsion}, \cite{FarguesScholze}).
However, their study as algebraic objects is 
limited due to the fact that any perfectoid
ring which is not a field is non-noetherian \cite[\S 2, Corollary 2.9.3]{AWSNotes},
so the vast majority of theorems in commutative
algebra do not apply.

Despite their
non-noetherianness, 
perfectoid rings
have many nice properties, such as the
tilting equivalence (\cite{KedlayaLiu1} and \cite{ScholzePerfSpaces}).
Most if not all of these properties come from the
fact that perfectoid rings are \textit{geometrically meaningful},
which in this case precisely means
they are rings of functions associated to meaningful 
geometric objects 
via
Huber's theory of adic spaces (see \cite{HuberAdicSpaces},
the geometric objects in question are always
\textit{analytic} adic spaces). 
One may then ask the question

\begin{ques}
	Do perfectoid rings form a nice class of non-noetherian rings
	for which we can ``do commutative algebra''?
\end{ques}

This question is of special interest when the perfectoid rings
are (quotients of) a \textit{perfectoid Tate algebra}
over a field, i.e.
\[
	K\langle x_{1}^{1 / p^{\infty}}, \ldots, x_{n}^{1 / p^{\infty}}\rangle
.\] 
Such rings form the so-called perfectoid covers of
noetherian rigid analytic (adic) spaces,
and thus are the most closely related to noetherian rings.

The purpose of this work is to show, at least in one sense, the
answer to this question is negative.

\begin{thm}[Corollary \ref{cor:field:uncountable}]
	Let \(K\) be a perfectoid field. 
	Then the perfectoid Tate algebra in \(n\) variables
	has uncountable Krull dimension.
\end{thm}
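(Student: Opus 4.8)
The plan is to isolate the real content in the one-variable ring $A := K\langle x^{1/p^\infty}\rangle$ and then to deduce the $n$-variable statement from it by a soft argument with a quotient map. Write $B := K\langle x_1^{1/p^\infty},\dots,x_n^{1/p^\infty}\rangle$; recall that $B$ is the $\pi$-adic completion of $\mathcal O_K[x_1^{1/p^\infty},\dots,x_n^{1/p^\infty}]$ with $\pi$ subsequently inverted, and similarly for $A$. The assignment $x_1^{1/p^k}\mapsto x^{1/p^k}$ and $x_i^{1/p^k}\mapsto 0$ for $i\ge 2$ defines an $\mathcal O_K$-algebra surjection on polynomial rings which is $\pi$-adically continuous (it carries $\pi^m$ times a polynomial to $\pi^m$ times a polynomial), hence extends to the $\pi$-adic completions and then to the localizations at $\pi$, yielding a continuous surjective $K$-algebra homomorphism $\phi\colon B\surj A$. (It is in fact split by the evident inclusion $A\hookrightarrow B$, but only its surjectivity matters below.)

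Granting the one-variable case---that $A$ possesses an uncountable chain $\{\mathfrak p_\alpha\}_{\alpha\in S}$ of distinct prime ideals totally ordered by inclusion---the theorem follows formally. Each $\phi^{-1}(\mathfrak p_\alpha)$ is a prime of $B$; pullback along a ring map preserves inclusions; and since $\phi$ is surjective one has $\phi\bigl(\phi^{-1}(\mathfrak q)\bigr)=\mathfrak q$ for every ideal $\mathfrak q\subseteq A$, so the ideals $\phi^{-1}(\mathfrak p_\alpha)$ are pairwise distinct and strictly nested exactly as the $\mathfrak p_\alpha$ are. Thus $\{\phi^{-1}(\mathfrak p_\alpha)\}_{\alpha\in S}$ is an uncountable chain of primes in $B$, i.e.\ $B$ has uncountable Krull dimension.

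It remains to produce the chain in $A = K\langle x^{1/p^\infty}\rangle$, and here I would not reprove everything from scratch but rather use the machinery set up above: it suffices to equip $A$ with a \emph{sufficiently nondiscrete Newton polygon formalism} and apply the Du-type theorem for such formalisms. The obvious candidate---expand $f=\sum_i a_i x^i$ with $a_i\in K$, $i\in\mathbb Z[1/p]_{\ge 0}$, $a_i\to 0$, and take the lower convex hull of the points $(i,v(a_i))$, the multiplicativity coming from the infimal-convolution estimate $v\bigl(\sum_{i+j=k}a_i b_j\bigr)\ge\min_{i+j=k}\bigl(v(a_i)+v(b_j)\bigr)$---is a perfectly good Newton polygon formalism, but it is not the one used here; following Du, one instead uses the ``nonstandard'' formalism obtained by exchanging the roles of $x$ and $\pi$, i.e.\ re-expanding $f$ in $\pi$-adic form and measuring the resulting coefficients by the $x$-adic valuation. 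I expect the main obstacle to be precisely the verification that this exchanged construction is still a genuine Newton polygon formalism: one must show that the relevant associated graded ring of $A$ has no zero divisors---equivalently, that the associated mixed Gauss seminorms are multiplicative---and this is where the perfectoid hypothesis on $K$ (compatible $p$-power roots $\pi^{1/p^\infty}$, and the rigidity they force via tilting) is genuinely used. Once that is in place, ``sufficiently nondiscrete'' is verified by producing, for each element of an uncountable totally ordered parameter set, an explicit convergent series---e.g.\ of the form $\sum_n\pi^{a_n} x^{b_n}$ with $v(\pi^{a_n})\to\infty$ and the associated slopes tending to a prescribed real value---whose Newton polygon pins down the corresponding prime, using the density of the value group of $K$ together with the $p$-divisibility of the exponent monoid. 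The Du-type theorem then delivers the uncountable chain in $A$, and the reduction above completes the proof.
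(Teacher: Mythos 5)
Your reduction from $n$ variables to one is the same as the paper's (pull the chain back along the evaluation surjection), and your identification of the key input --- the ``nonstandard'' Newton polygon formalism with the roles of $x$ and $\pi$ exchanged, fed into the Du-type theorem --- is also the paper's route. But there is a genuine gap in how you propose to handle the inversion of $\pi$. A Newton polygon formalism takes values in $\contposreal$, and units are forced to have trivial Newton polygon: if a unit had a nonzero polygon, multiplicativity would force its inverse to have a negative one. In the exchanged formalism $\pi$ plays the role of the series variable, so $\newt(\pi)$ is nontrivial; hence this formalism lives on $\mathcal{O}_K\langle x^{1/p^\infty}\rangle$, where $\pi$ is not a unit, and cannot be transported to $A = K\langle x^{1/p^\infty}\rangle$, where it is. The Du-type theorem therefore only produces an uncountable chain of primes $\mathfrak{p}_\lambda$ in the integral ring, and a priori every one of them could contain $\pi$, in which case the entire chain would collapse upon inverting $\pi$ and you would learn nothing about $A$.

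Closing this gap is the content of the paper's subsection on the Tate algebra over $K$: one traces through the Kang--Park construction $\mathfrak{p}_\lambda = \sqrt{I_\lambda W}\cap R$ and checks that each $\mathfrak{p}_\lambda$ is contained in the ideal $\mathfrak{m}$ of series $\sum_i a_i\pi^i$ all of whose coefficients have positive ($x$-adic) valuation: the generators $g_\mu$ lie in $\mathfrak{m}$ by construction, and since $\mathfrak{m}$ is prime one gets $\sqrt{I_\lambda W}\cap R \subseteq \sqrt{\mathfrak{m}W}\cap R \subseteq \mathfrak{m}$, while $\pi\notin\mathfrak{m}$. Some version of this step is indispensable in your write-up. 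A smaller point: multiplicativity of the mixed Gauss norms is not obtained via tilting --- the paper explicitly avoids tilting, since the untilt of a prime need not even be closed under addition --- but by embedding into the spherical completion and proving multiplicativity directly on the Mal'cev--Neumann model.
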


In \cite{LangLudwig}, Lang and Ludwig show 
that Fontaine's period ring
\(\mathbb{A}_{\inf}\) has infinite Krull dimension.
Building on this, Du shows in \cite{DuUncountable}
that 
\(\mathbb{A}_{\inf}\) in fact
has uncountable Krull dimension.
Du's method additionally applies to the ring of formal
power series over a nondiscretely valued ring.
Du uses a family of valuations arising from mutiplicative
Gauss norms on \(\mathbb{A}_{\inf}\), 
explicitly calculating some Newton polygons of elements
and estimating the 
asymptotic behavior of their Legendre transforms.

In this work, we conceptualize Du's argument, 
defining a framework in which Du's argument 
applies, which includes more general rings 
than are treated in \cite{DuUncountable}.
We also provide a version of Du's proof which
uses basic properties of 
little omega notation (a variant of Big O)
instead of explicit estimates.
Then, we apply this formalism to 
the perfectoid Tate algebra over \(\mathcal{O}_{K}\),
the ring of integers of \(K\),
and check that the ideals which form the uncountable 
chain do not contain our selected pseudo-uniformizer
\(\pi\).

There are two important points--firstly,
the roles of \(x\) and \(\pi\)
are swapped from the ``standard'' way that
one thinks about Newton polygons.
This exchange of the arithmetic and geometric variables
is reminiscent of ideas from the world of
varieties over function fields,
and reflects the ``perfectoid outlook''
that \(p\) (or more generally, \(\pi\)) should 
be treated as ``just another variable''.

Secondly, unlike many proofs about perfectoid
rings, the result in mixed characteristic
does not follow from tilting to characteristic
\(p\)--in particular, as the tilting
map is not additive, there no guarantee that
the untilt of an arbitrary prime ideal should 
even be closed under addition.
Instead, we prove the theorem in 
each characteristic separately, 
in both cases passing to the spherical completion,
and using the explicit description of the
spherical completion described in \cite{PoonThesis}.
In fact, passing to the explicit description of the spherical 
completion happens to be 
just what is needed for formalizing for Du's argument, 
which roughly corresponds to the fact that in order
to use Newton polygon methods, one has to 
``tell apart the coefficients from the variables''.

Finally, we apply our same formalism to prove 
that 
\(A_{\infty}\), in the notation of the paper
\cite{PerfectoidSignature}, has uncountable
Krull dimension.
We also prove that a large 
(non-noetherian) subring of 
\(\widehat{R^{+}}\), the \(p\)-adic completion
of the absolute integral closure of a noetherian 
local domain \(R\) of mixed characteristic 
and dimension greater than 2,
has uncountable Krull dimension. 
This is related to a question posed in \cite{HeitmannRplus}.

All of the examples of rings with uncountable Krull 
dimension via Du's method have the one thing in common:
they are ``nondiscrete'' with respect to one variable,
and complete with respect to another. 
For example, if we consider \(R = \mathcal{O}_{K}[[x]]\), where
\(K\) is a perfectoid field,
\(R\) is complete with respect to \(x\), and nondiscrete
with respect to \(\pi\) (the pseudo-uniformizer).
On the other hand, if \(R = \tateone{K}\),
then \(R\) is complete with resepct to \(\pi\) and
nondiscrete with respect to \(x\) 
(it is also nondiscrete with respect to \(\pi\), 
but this is immaterial to the argument).
Roughly, these two facts guarantee the ability 
to construct Newton polygons of (sufficiently)
arbitrary shapes. 
Completeness allows for infinite Newton polygons and
ensures that a convergence condition is not a
restriction on the possible shapes of Newton polygons,
while nondiscreteness allows these Newton polygons
to approach 0 as the index variable goes to infinity.
Uncountability via Du's method follows from
the ability to construct such Newton polygons of
elements in \(R\).

Thus, 
we expect that any perfectoid ring which
``looks at least two dimensional''
(for example, has a regular element which is 
algebraically independent from \(\pi\))
to have uncountable Krull dimension.
Furthermore, we expect that Du's method
will apply to such rings, given
a sufficently explicit way of writing down
Newton polygons (we make this precise in 
Section \ref{sec:formal}).

In Section 2, we give a bit of technical background 
on Mal'cev-Neumann Series, the key ingredient in 
Poonen's explicit description of spherical completions.
Section 2.1 is devoted
to proving the multiplicativity of Gauss norms
on Mal'cev-Neumann series in a very general situation.
In this general setting, we give an example of an
element whose Gauss norm is not a maximum and is instead
an honest supremum. 
Getting around such examples is quite technical
and not strictly necessary for our examples, but 
it could have independent interest.
Section 2.1 can be skipped for a first reading.
In Section 3, we define the an abstract formalism for
Newton polygons of elements of a ring,
and show a few properties.
In Section 4, we outline Du's argument and prove it
for an arbitrary Newton polygon formalism.
In Section 5, we apply Du's theorem to various perfectoid
rings, including the (p-adic completion of) the
absolute integral closure and the perfectoid Tate algebra.

\subsection{Acknowledgements}

The author would like to thank Kiran Kedlaya for providing 
much support during the process of writing this paper. 
Moreover, the author would like to thank 
Hanlin Cai, Gabriel Dorfsman-Hopkins, 
Linquan Ma, Alexander Mathers, Shravan Patankar, Karl Schwede, 
Kevin Tucker, and Peter Wear for helpful conversations and comments. 
In addition, the author thanks 
Shravan Patankar for pointing out an error in an early draft of this paper.

This material is based on work partially supported by the 
National Science Foundation Graduate Research Fellowship Program under 
Grant No. 2038238, 
and a fellowship from the Sloan Foundation.

\section{Background on Mal'cev-Neumann Series}

We define Mal'cev-Neumann rings in a slightly more
general setting than they sometimes appear in the literature,
as we will need this later.

\begin{defn}
	Let \(R\) be a ring,
	and let \(\Gamma\) be a totally ordered group.
	The \textit{Mal'cev-Neumann series} (in one variable)
	over \(R\) with value group \(\Gamma\)
	is the set of formal sums
	\[
	f = \sum_{i \in \Gammanonneg}^{} a_{i}t^{i} 
	\] 
	such that the
	support of \(f\), 
	i.e. \(\{i | a_{i} \neq 0\}\), 
	is well-ordered.
	It is denoted \(R\malcevt\).
\end{defn}

One can define addition and multiplication
as with standard power series, using 
the well-ordered property to guarentee that
the ring is closed under multiplication.
Note that since we only allow positive exponents, 
\(R\malcevt\) is the ``integral'' version
of the Mal'cev-Neumann series.
We can analogously define a ``rational'' version,
which is a field when \(R = K\), and we denote
this by \(K\malcevtrat\).

\begin{defn}
	Let \(R\) be a valued ring.
	Then the 
	\textit{Gauss norm of radius \(\rho\) }
	on 
	\(R\malcevt\) is given by 
	\[
	\sup_{i} \{|a_{i}|\rho^{i}\}
	\] 
	for \(0 < \rho < 1\).
\end{defn}

Note that we have not yet proven that the Gauss norm
is a norm, we will do this in a bit. 
We also make use of the following
(slightly generalized) \(p\)-adic version 
of the Mal'cev-Neumann series, as
defined in \cite{PoonThesis}.

\begin{defn}
	Let \(R\) be a \(W(k)\)-algebra, for \(k\)
	a field of characteristic \(p\).
	Let \(\Gamma\) be a totally ordered group
	with a fixed embedding \(\mathbb{Z} \ins \Gamma\).

	We say a series 
	\(\sum_{i}^{} a_{i}t^{i} \) in \(R\malcevt\) is
	a \textit{null series} if
	\(\sum_{j \in i + \mathbb{Z}}^{} a_{j}p^{j} = 0 \) in \(R\).
	The set of null series is an ideal \(N\) 
	(use the proof of \cite{PoonThesis}, Prop 4.3),
	and we define 
	the \(p\)-adic Mal'cev-Neumann series as
	\[
	R\malcevp \colonequals R\malcevt / N
	\] 
\end{defn}

As before, we can define a ``rational''
version for a field \(K\) which contains
the fraction field of \(W(k)\), 
with similar operations.
We denote this by \(K\malcevprat\).
We record the following folklore lemma, due to 
Kedlaya:

\begin{lem}[Kedlaya]
	\label{lem:folklore:malcevp}
	Let \(k\) be a perfect field of characteristic \(p\).
	Let \(R\) be a \(p\)-complete \(W(k)\)-algebra
	which embeds into \(W(R / p)\).
	Then
	\[
		R\malcevp \isom \widehat{R\malcevt} / (t-p)
	\] 
	where the completion is \(p\)-adic.
\end{lem}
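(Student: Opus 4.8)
The plan is to describe both sides concretely, as modules of ``reduced'' Mal'cev--Neumann series, and match them up. Write \(A \colonequals R\malcevt\), let \(N \ins A\) be the ideal of null series, so that \(R\malcevp = A/N\) by definition, and let \(\widehat{A}\) be the \(p\)-adic completion. Throughout I use that for every \(\mathbb{Z}\)-coset of exponents the least nonnegative exponent exists and lies in \(\Gamma\cap[0,1)\) (automatic under the conventions on \(\Gamma\) in force --- this is implicit already in the null-series sum involving only nonnegative powers of \(p\)). The cheap preliminary step is that \(t-p\in N\): the exponent support \(\{0,1\}\) of \(t-p\) sits in the single \(\mathbb{Z}\)-coset \(\mathbb{Z}\), and there the sum in the null condition is \((-p)p^{0}+1\cdot p^{1}=0\). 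So the quotient \(A\surj A/N\) kills \(t-p\), and the lemma asserts that the resulting surjection becomes an isomorphism once one \(p\)-completes the source.

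First I would identify \(\widehat{A}\). Since \(p^{n}A=(p^{n}R)\malcevt\) and \(R\) is \(p\)-adically complete, \(\widehat{A}\) consists of the formal series \(\sum_{i\in\Gammanonneg}a_{i}t^{i}\) with \(a_{i}\in R\) for which \(\{i:a_{i}\notin p^{n}R\}\) is well-ordered for every \(n\) --- the defining support condition of \(A\), relaxed ``\(p^{n}\)-level by \(p^{n}\)-level''. The heart of the argument is division with remainder by \(t-p\) in \(\widehat{A}\): given \(\hat f=\sum a_{i}t^{i}\), set \(\hat g\colonequals\sum_{\delta}\bigl(\sum_{l\geq 0}a_{\delta+1+l}p^{l}\bigr)t^{\delta}\), whose coefficients converge in \(R\) and which lies in \(\widehat{A}\) by a routine well-orderedness estimate. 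Comparing \(t^{i}\)-coefficients termwise gives
\[
	\hat f-(t-p)\hat g \;=\; \sum_{\delta\in\Gammanonneg\cap[0,1)}\Bigl(\sum_{m\geq 0}a_{\delta+m}p^{m}\Bigr)t^{\delta}\;=:\;r(\hat f),
\]
a series supported on \(\Gammanonneg\cap[0,1)\). One checks that \(\hat f\mapsto r(\hat f)\) is an \(R\)-linear projection of \(\widehat{A}\) onto the submodule of such ``reduced'' series with kernel exactly \((t-p)\widehat{A}\), and that \(t-p\) is a nonzerodivisor on \(\widehat{A}\) (if \((t-p)\hat h=0\) the relations \(a^{h}_{\gamma}=p\,a^{h}_{\gamma+1}\) drive every coefficient into \(\bigcap_{n}p^{n}R=0\)). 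Hence \(\widehat{A}/(t-p)\widehat{A}\) is genuinely \(R\)-linearly isomorphic, via \(r\), to the module of reduced series.

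Next I would identify \(R\malcevp=A/N\) with the same module, compatibly, using the reduction homomorphism \(\rho\colon A\to\{\text{reduced series}\}\) that pushes each coefficient \(a_{i}\) down its \(\mathbb{Z}\)-coset to the least nonnegative exponent, absorbing the matching power of \(p\). One has \(\ker\rho=N\) (unwinding the null condition) and \(\rho=r\) on \(A\). The only point that needs an argument is that \(\rho\) is onto: given a reduced series \(\sum_{\delta}b_{\delta}t^{\delta}\), put \(m_{\delta}\colonequals v_{p}(b_{\delta})\), which is finite since \(b_{\delta}\neq 0\) and \(R\) is \(p\)-separated, and set \(f\colonequals\sum_{\delta}(b_{\delta}/p^{m_{\delta}})\,t^{\delta+m_{\delta}}\); then \(f\in A\) --- its support meets each interval \([n-1,n)\) in a translate of the well-ordered set \(\{\delta:v_{p}(b_{\delta})=n-1\}\), and well-ordered sets laid end to end stay well-ordered --- and \(\rho(f)=\sum_{\delta}b_{\delta}t^{\delta}\). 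Thus \(R\malcevp\isom\{\text{reduced series}\}\) as rings. The two ring structures on the module of reduced series (one transported from \(\widehat{A}/(t-p)\widehat{A}\), one from \(A/N\)) agree, since on the image of \(A\) --- which is all of it --- both compute ``multiply in \(A\), then reduce''. Combining, \(R\malcevp\isom\widehat{R\malcevt}/(t-p)\), and chasing the identifications this is the isomorphism induced by \(t\mapsto p\).

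I expect the only genuine friction to be combinatorial rather than algebraic: the ``algebra'' is just Euclidean division by \(t-p\), but one must keep three slightly different support conditions apart --- well-ordered support on \(A\), ``well-ordered modulo \(p^{n}\)'' on \(\widehat{A}\) and on reduced series --- and be precise about the meaning of the null-series sum \(\sum_{j\in i+\mathbb{Z}}a_{j}p^{j}\) (read with the coset representative taken to be the least nonnegative exponent). The surjectivity of \(\rho\) --- equivalently, the fact hidden in the statement that \(R\malcevp\) is already \(p\)-adically complete --- is the one step that really uses \(p\)-separatedness of \(R\) together with the relaxed support condition, via the end-to-end well-ordering trick above. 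I would also note that the hypothesis that \(R\) embeds into \(W(R/p)\) enters only through \cite[Prop.~4.3]{PoonThesis}, to know that \(N\) is an ideal so that \(R\malcevp\) is a ring; the rest uses only that \(R\) is \(p\)-adically complete and separated.
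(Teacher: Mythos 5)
Your proof is correct, but it takes a genuinely different route from the paper's. The paper's proof starts from the surjection \(\widehat{R\malcevt} \to R\malcevp\) (quoting without proof that \(R\malcevp\) is \(p\)-adically complete), shows the kernel lies in \((t,p)\) by a Gauss-norm/ultrametric estimate, and then upgrades \((t,p)\) to \((t-p)\) by successive approximation, using that the null ideal \(N\) is \(p\)-saturated and passing to the limit in the completion. You instead perform explicit Euclidean division by \(t-p\) inside \(\widehat{R\malcevt}\), exhibit a normal form (series supported on \(\Gamma\cap[0,1)\) with the relaxed, level-by-level well-orderedness condition), and identify both \(\widehat{R\malcevt}/(t-p)\) and \(R\malcevt/N\) with that common module. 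Your approach is norm-free and more self-contained, and it delivers as byproducts precisely the two facts the paper elsewhere takes as inputs: the \(p\)-adic completeness of \(R\malcevp\) (your surjectivity of \(\rho\), via the "spread coefficients along their \(p\)-adic valuations" trick) and the unique reduced representation of Lemma \ref{lem:malcevp:unique}; the paper's route is shorter on the page and leans on the valuation-theoretic machinery it develops anyway in Section 2.1. Two cosmetic points: write "choose \(c_{\delta}\) with \(p^{m_{\delta}}c_{\delta}=b_{\delta}\)" rather than \(b_{\delta}/p^{m_{\delta}}\), since \(p\) need not be a nonzerodivisor on \(R\) a priori (any such choice is automatically not divisible by \(p\)); and note that the separatedness \(\bigcap_{n}p^{n}R=0\), which you use both for the nonzerodivisor claim and implicitly for the finiteness of \(v_{p}(b_{\delta})\), comes from the hypothesis that \(R\) embeds into \(W(R/p)\) — so that hypothesis is used for slightly more than just making \(N\) an ideal.
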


\begin{proof}
	First, consider the natural projection from 
	\(R\malcevt\) to \(R\malcevp\) from the definition
	of \(R\malcevp\).
	Now, take the induced map on \(p\)-adic completions.
	This map is surjective as completions preserve
	surjections,
	and as \(R\malcevp\) is \(p\)-adically complete, 
	we have 
	\[
	\pi : \widehat{R\malcevt} \to R\malcevp
	.\] 
	So it is enough to show that the kernel of this
	map is \((t-p)\).

	First, however, we show that the kernel is contained in 
	\((t,p)\).
	Consider the \(p\)-adic norm on \(R\) (restrict the \(p\)-adic
	norm from \(W(R / p)\)).
	Then let \(|\cdot|\) be the 
	Gauss norm on \(R\malcevt\) of radius 
	\(\epsilon = |p|\) which extends
	the \(p\)-adic norm on \(R\).
	Suppose \(f = \sum_{i}^{} a_{i}t^{i} \) 
	is a null series,
	but not in the ideal \((t,p)\).
	Then there exists an index \(i\) with 
	\(i < 1\) such that \(\epsilon < |a_{i}|\).
	Then, consider the sum
	\[
	f^{\prime} = \sum_{j \in i + \mathbb{N} \cup \{0\}}^{} a_{j}t^{j},
	\] 
	which is zero in \(R\) because \(f\) 
	is a null series.
	Thus, 
	\[
		a_{i}p^{i} = \sum_{j \in i + \mathbb{N}}^{} a_{j}t^{j} 
		\tag{*}
	.\] 
	However, the norm of the first term 
	of \(f^{\prime}\) has
	\[
		\epsilon^{1 + i} = \epsilon (\epsilon^{i}) = \epsilon |t^{i}| < |a_{i}t^{i}|
	\] 
	every other term has \(1 + i < j\), so 
	\[
	|a_{j}t^{j}| < |t^{j}| < \epsilon^{1 + i}
	.\] 
	By the strong triangle inequality, the left hand side
	of Equation (*).
	must have greater norm than the right hand side, a 
	contradiction.
	
	Finally, we show that \(f\) is indeed in \((t - p)\),
	though here we need to use the fact that we took the
	\(p\)-adic completion.
	Since \(\{t-p,p\}\) is a generating set for the ideal
	\((t,p)\), we may write
	\(f = a(t-p) + bp\). 
	We now show that \(b \in \Ker \pi\):
	as \(t-p \in \Ker \pi\), \(bp\) certainly is.
	Now,
	it is clear from the definition of null series
	that \(N\) is a \(p\)-saturated ideal, that is
	\(bp \in N \implies b \in N\).
	So \(b\in N\), and 
	\[
		f = a(t-p) + p(c(t-p) + dp) = a(t-p) + pc(t-p) + p^{2}d
	.\] 
	And we see that mod \(p^{2}\), \(f\) is divisible by \(t-p\).
	Continuing by induction, f is divisible by \(t-p\) mod \(p^{n}\)
	for all \(n\), so it is divisible by \(t-p\) in the completion.
\end{proof}

We also have the following lemma, which 
is a modified version of \cite[Proposition 4.4]{PoonThesis},
but adjusted to our context.

\begin{lem}
	\label{lem:malcevp:unique}
	Let \(R\) be a \(p\)-complete \(W(k)\)-algebra which embeds
	into \(W(R / p)\).
	Then 
	any element in 
	\(R\malcevp\) 
	can be written uniquely as a sum
	\(f = \sum_{i}^{} a_{i}p^{i} \)
	with \(a_{i} \in R\) such that \(p \nmid a_{i}\).
\end{lem}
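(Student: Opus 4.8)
The plan is to establish existence and then uniqueness of the asserted (normalized) expansion, both resting on one elementary point: if $g = \sum_i c_i t^i \in R\malcevt$ is a nonzero null series and $m = \min \supp(g)$, then $c_m \in pR$. Indeed, among the indices of $\supp(g)$ lying in the coset $m + \mathbb{Z}$ the smallest is $m$ itself, so the null condition for that coset, based at $m$, says $c_m = -\sum_j c_j p^{\,j-m}$ with $j$ ranging over the remaining indices of $\supp(g)$ in $m+\mathbb{Z}$; these have $j-m \in \mathbb{Z}_{\geq 1}$ and are distinct, so the sum converges $p$-adically in the $p$-complete ring $R$ and lies in $pR$. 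Apart from $p$-completeness I will use that $R$ is $p$-adically separated — this holds since $R$ embeds into $W(R/p)$, which is $p$-adically separated — which is what lets us write a nonzero $a \in R$ as $p^n b$ with $p \nmid b$.

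For existence, note first the relation $p\,t^i \equiv t^{i+1} \pmod N$ (the element $p t^i - t^{i+1}$ is visibly null), whence $p^n t^i \equiv t^{i+n}$ for every $n \in \mathbb{Z}_{\geq 0}$. Given $f = \sum_i a_i t^i$, I split $\supp(f)$ according to the cosets of $\mathbb{Z}$ in $\Gamma$; within a fixed coset, list its exponents $i_1 < i_2 < \cdots$ (so the $i_k - i_1$ are distinct nonnegative integers), and use the relation above to rewrite that coset's part of $f$, modulo $N$, as the single monomial $\big(\sum_k a_{i_k}\,p^{\,i_k - i_1}\big)\,t^{i_1}$, the coefficient converging $p$-adically in $R$. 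If this coefficient is nonzero, extract its largest power of $p$ to get $b_\gamma t^{\,i_\gamma^{*}}$ with $p \nmid b_\gamma$ and $i_\gamma^{*}$ in the coset $\gamma$; otherwise discard the coset. Summing over cosets yields $\sum_\gamma b_\gamma t^{\,i_\gamma^{*}} \equiv f \pmod N$, with all coefficients prime to $p$ and at most one term in each coset — the required form. Call such an expansion normalized; the uniqueness assertion is to be read for normalized expansions.

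Uniqueness is then short. Suppose $A = \sum_\gamma b_\gamma t^{\,i_\gamma}$ and $B = \sum_\gamma b'_\gamma t^{\,j_\gamma}$ are normalized and $A - B \in N$. Fix a coset $\gamma$; the null condition of $A-B$ for this coset involves only the coefficients of $A$ and $B$ supported there, namely $b_\gamma$ at $i_\gamma$ and $-b'_\gamma$ at $j_\gamma$ (when present). Going through the cases — only one of $A, B$ has a term in $\gamma$; both do, at distinct exponents; both do, at the same exponent — and using $p \nmid b_\gamma$, $p \nmid b'_\gamma$, one finds in each case that the contradiction is avoided only when the two terms coincide. Hence $A = B$. (For $k$ perfect one could alternatively obtain existence from the presentation $R\malcevp \isom \widehat{R\malcevt}/(t-p)$ of Lemma \ref{lem:folklore:malcevp}, but the argument above does not require $k$ perfect.)

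The main obstacle is the bookkeeping in the existence step. One point is to check that the exponent set $\{i_\gamma^{*}\}$ is still well-ordered: it arises from the well-ordered set $\{i_1^{(\gamma)}\}$ by raising each element by a nonnegative integer, and this preserves well-ordering — an infinite descending sequence among the shifts would, after passing to an increasing subsequence of the underlying original exponents, force an infinite strictly descending sequence of nonnegative integers. The other is to make sure that the infinite, $p$-adically convergent rearrangements used in the collapse are genuine identities modulo $N$, i.e.\ that $N$ absorbs the well-ordered sums in play; this is checked coset by coset, where each coset contributes a single convergent $p$-adic identity in $R$.
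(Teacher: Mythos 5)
Your proof is correct and follows essentially the same route as the paper's (which defers to Poonen's Proposition 4.4): decompose the support along the cosets of $\mathbb{Z}$ in $\Gamma$, collapse each coset to a single term using $p$-adic convergence in the $p$-complete ring $R$, and then normalize the coefficient. Your explicit remark that uniqueness must be read as ``at most one term per coset'' (since, e.g., $a_0t^0 + a_1t^1$ and $(a_0+a_1p)t^0$ are distinct formal expansions with all coefficients prime to $p$ representing the same element) is a worthwhile clarification of the statement that the paper's terser proof leaves implicit.
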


\begin{proof}
	We follow \cite[Proposition 4.4]{PoonThesis}, in particular
	choose the set of representatives 
	\(\Gamma^{\prime} = \Gamma \cap [0,1]\) 
	of \(\Gamma / \mathbb{Z}\), and let 
	\(f = \sum_{i}^{} a_{i}p^{i}\).
	Then we can consider for any 
	\(\gamma \in \Gamma^{\prime}\),
	\(f_{\gamma} = \sum_{n \in \mathbb{N}}^{} a_{\gamma + n}p^{n}\),
	which is an element of \(\widehat{R}\).
	Since \(R\) embeds into \(W(R / p)\),
	we have that \(f_{\gamma}\) has a unique
	representation in \(\widehat{R}\).
	Then, finish the proof with exactly the
	same logic as \cite{PoonThesis}: 
	\[
	\sum_{\gamma \in \Gamma^{\prime}}^{} f_{\gamma} 
	\] 
	is an element of \(R\malcevp\) which is equivalent 
	to \(f\).
\end{proof}

We would like to define the \(p\)-adic version of the Gauss norm,
but we must be slightly careful about what the 
``norm on the base ring'' precisely is:

\begin{defn}
	\label{defn:normfamily:padic}
	Let \(k\) be a perfect field of characteristic \(p\),
	and
	let \(R\) be a \(W(k)\)-algebra which 
	embeds into \(W(R / p)\).
	We say that \(R\) \textit{has a family of \(p\)-adic norms}
	if 
	\(R\) has a (continuous) family of 
	norms/valuations \(|\cdot|_{\rho}^{\text{base}}\)
	for all \(\rho \in [0,1]\),
	subject to the conditions that
	\begin{itemize}
		\item \(|p|_{\rho}^{\text{base}} = \rho\)
		\item \(|\cdot|_{\rho}^{\text{base}}\) agree
			for any element which is not divisible
			by \(p\).
	\end{itemize}
\end{defn}

\begin{ex}
	Suppose that \(R / p\) is perfect and 
	that \(R\) can be embedded in \(W(R / p)\).
	Then \(R\) has a family of \(p\)-adic norms 
	by restricting from the Witt vectors.
	In this case, we can write any element \(f\) of \(R\) 
	uniquely as a sum \(f = \sum_{i}^{} a_{i}p^{i} \) 
	where \(a_{i}\) is in the image of the multiplicative
	lift in \(W(R / p)\).
\end{ex}

\begin{defn}
	Suppose \(R\) is a \(W(k)\)-algebra which has a family of 
	\(p\)-adic valuations
	and can be embedded into \(W(R / p)\).
	Then the \(p\)-adic Gauss norm of 
	radius \(\rho\) on  
	\(R\malcevp\) is
	defined to be
	\[
	|f|_{\rho} :=
	\sup_{i} \{|a_{i}|_{\rho}^{\text{base}}\rho^{i}\}
	.\] 
\end{defn}

One of the main applications of Mal'cev-Neumann rings
is constructing the spherical completion of a field
of characteristic \(p\).

\begin{thm}
	[Poonen, \cite{PoonThesis}, Corollary 5.5-5.6]
	Let \(K\) be a nonarchimedean
	algebraically closed field
	with valuation \(v\),
	residue field \(k\) and value group \(\Gamma\).
	Then there exists a unique 
	(up to non-unique isomorphism)
	spherically complete extension of \(K\),
	which is also maximally complete
	(\cite{KaplanskyMaximal1}, \cite{KaplanskyMaximal2}).
	We denote this as \(K^{sp}\).
	Being a field, \(K\) contains either 
	\(\mathbb{Q}\) or \(\mathbb{F}_{p}\).
	\begin{itemize}
		\item If the restriction of \(v\) 
			to \(\mathbb{Q}\) or 
			\(\mathbb{F}_{p}\) 
			is the trivial valuation,
			then \(K^{sp} \isom k\malcevtrat\).
		\item If \(K\) has characteristic zero
			and the restriction of \(v\) to 
			\(\mathbb{Q}\) is the \(p\)-adic 
			valuation, then
			\(K^{sp} \isom \Frac(W(k))\malcevprat\).
	\end{itemize}
\end{thm}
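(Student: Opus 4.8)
The plan is to combine Kaplansky's structure theory of maximally complete valued fields with the explicit Mal'cev--Neumann constructions above. The backbone, which I would quote wholesale, is the classical package: a nonarchimedean field is spherically complete if and only if it is \emph{maximally complete} (admits no proper immediate extension, i.e.\ no proper extension with the same residue field and value group); every valued field has a maximal immediate extension by a Zorn's-lemma argument, and such an extension is automatically maximally complete; and --- this is Kaplansky's uniqueness theorem --- the maximal immediate extension is unique up to (non-canonical) valued isomorphism whenever the residue field $k$ is perfect with no finite extension of degree divisible by $\operatorname{char}(k)$ and the value group $\Gamma$ is $p$-divisible for $p=\operatorname{char}(k)$ (no condition when $\operatorname{char}(k)=0$). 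For $K$ algebraically closed, $k$ is algebraically closed and $\Gamma$ is divisible, so these hypotheses hold automatically; I would \emph{define} $K^{sp}$ to be this maximal immediate extension. It is then the essentially unique spherically complete extension of the statement, since any proper extension of $K^{sp}$ with the same residue field and value group would be a proper immediate extension of it.

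It remains to identify $K^{sp}$ with the stated Mal'cev--Neumann field, which I would do by exhibiting the latter as a maximal immediate extension of $K$. \textbf{First} I would check that $k\malcevtrat$ (resp.\ $\Frac(W(k))\malcevprat$) is a field which is spherically complete, with residue field $k$ and value group $\Gamma$. For $k\malcevtrat$ this is classical (Krull): given a nested sequence of balls, one reads off the coefficients of a limit point one exponent at a time, well-orderedness of supports guaranteeing consistency. For the mixed-characteristic ring I would run the analogous argument, using Lemma~\ref{lem:folklore:malcevp} to present $\Frac(W(k))\malcevprat$ as the $(t-p)$-quotient of the $p$-adic completion of $\Frac(W(k))\malcevtrat$ and Lemma~\ref{lem:malcevp:unique} for the normal form $f=\sum_i a_i p^i$ with $p\nmid a_i$; this is precisely Poonen's maximal-completeness theorem.

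\textbf{Second} I would embed $K$ into the appropriate ring as a valued subfield. In equal characteristic, $\mathcal{O}_{K}$ is Henselian with algebraically closed (hence perfect) residue field of the same characteristic, so it contains a coefficient field $k\inj\mathcal{O}_{K}$; choosing in addition a set-theoretic section $\Gamma\to K^{\times}$ of the valuation and assembling the two data gives a valued embedding $K\inj k\malcevtrat$. In mixed characteristic, I would replace the coefficient field by the canonical local homomorphism $W(k)\to\mathcal{O}_{K}$ coming from functoriality of Witt vectors of the perfect field $k$, and again use a section of $\Gamma$, obtaining $K\inj\Frac(W(k))\malcevprat$. In each case the image carries the full residue field $k$ and value group $\Gamma$, so the inclusion is immediate; being also maximally complete, the target is therefore a maximal immediate extension of $K$, hence isomorphic to $K^{sp}$ by Kaplansky's uniqueness.

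I expect the main obstacle to be the mixed-characteristic half: verifying that $W(k)\to\mathcal{O}_{K}$ together with a cross-section of $\Gamma$ genuinely produces a valued embedding compatible with the $p$-adic Mal'cev--Neumann structure (not merely the naive $t$-adic one), and that $\Frac(W(k))\malcevprat$ really is spherically complete with nothing lost in the passage to the quotient by null series. This is where Lemmas~\ref{lem:folklore:malcevp} and~\ref{lem:malcevp:unique} and the $p$-saturation of the null ideal carry the load; the equal-characteristic case, the existence of coefficient fields, Zorn's lemma, and Kaplansky's uniqueness theorem I would invoke essentially as black boxes.
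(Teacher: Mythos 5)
This theorem is not proved in the paper at all: it is quoted verbatim from Poonen's thesis (Corollaries 5.5--5.6), so there is no in-paper argument to compare against. Your sketch reproduces the standard Kaplansky--Poonen line of proof and is essentially the right one: spherically complete $=$ maximally complete, existence of maximal immediate extensions by Zorn, uniqueness under Kaplansky's hypothesis A (automatic here since $k$ is algebraically closed and $\Gamma$ divisible), and identification of the Mal'cev--Neumann field as a maximal immediate extension of $K$. One step is glossed in a direction-reversing way, though: a coefficient field $k \inj \mathcal{O}_{K}$ together with a cross-section $\Gamma \to K^{\times}$ (which splits off $\Gamma$ because divisible abelian groups are injective) does \emph{not} directly ``assemble'' into an embedding $K \inj k\malcevtrat$; it gives an identification of the monomial subfield $k(t^{\gamma}:\gamma\in\Gamma)\ins k\malcevtrat$ with a subfield $K_{0}\ins K$ having the same residue field and value group. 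One then observes that $K$ and $k\malcevtrat$ are both immediate extensions of $K_{0}$, the latter maximal, and invokes Kaplansky's embedding theorem (valid under hypothesis A) to extend the identification to an embedding of $K$, after which maximality forces $K^{sp}\isom k\malcevtrat$. The same correction applies in mixed characteristic with $W(k)\to\mathcal{O}_{K}$ and $\Frac(W(k))\malcevprat$. Since you already black-box Kaplansky's uniqueness, this is a presentational gap rather than a fatal one, and your identification of the mixed-characteristic verification (via Lemmas \ref{lem:folklore:malcevp} and \ref{lem:malcevp:unique}) as the genuinely hard part matches where the work actually lies in Poonen's thesis.
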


\begin{rmk} \label{rmk:piadic:malcev}
	If we have a \(W(k)\)-algebra \(R\) of mixed characteristic,
	we may choose a pseudo-uniformizer \(\pi\) 
	with \(\pi^{p} | p\)
	and choosing an element 
	\(f \in R\malcevt\)
	whose \(t\)-adic valuation is equal to \(\pi\),
	we can adapt Poonen's construction to 
	use a chosen pseudo-uniformizer,
	and our expansions will be \(\pi\)-adic 
	instead of \(p\)-adic.
	All of our proofs about \(p\)-adic Mal'cev-Neumann
	series apply to the \(\pi\)-adic variant
	with the proofs simply replacing \(p\) with \(\pi\).
\end{rmk}

\subsection{Gauss Norms on Mal'cev-Neumann Series}

We show that the Gauss norm is a norm, and that it is multiplicative.
We will prove multiplicativity 
for the full ring of Mal'cev-Neumann series. 
However, most concrete applications require multiplicativity
only for various subrings with finiteness conditions,
whose proofs are much easier.
We will do the general case, skipping
the simpler ad hoc cases. This will require a bit of extra
work.

We now investigate an obstacle to
proving the multiplicativity of 
Gauss norms on Mal'cev-Neumann rings
(and working with these rings in general).
For many proofs of multiplicativity of Gauss norms
on simpler rings,
one uses that the \(\sup\) in
the definition of the Gauss norm is a maximum.
However, this is not true in general for 
Mal'cev-Neumann rings.

\begin{defn}
	The \textit{Puiseux series} over the ring \(R\) 
	is the power series ring \(R[[x]]\) adjoined with 
	all \(n\)-th roots of \(x\), that is 
	\[
		\bigcup_{n \in \mathbb{N}}^{} R[[x^{1 / n}]] 
	.\] 
	We will denote this ring by \(R\puiseuxx\).
\end{defn}

\begin{ex} \label{ex:gauss:sup}
	Let \(R = k\puiseuxx\).
	Fix \(\mu\) between zero and one, and 
	equip \(R\) with the valuation
	associated to the Gauss norm of radius \(\mu\).
	Now, we will construct an element of 
	\(R\puiseuxx\malcevt\) 
	whose Gauss norm is not a maximum. 
	Pick \(\rho\) between zero and one.
	Pick a sequence \(\delta_{n} \to 0\) with
	\(\delta_{n} \in \Gamma \cap [0,1]\)
	such that 
	\[
		\delta_{n} < \frac{1}{n} \log_{\rho}(\mu)
	.\] 
	Then, consider the element
	\[
	f = \sum_{n=1}^{\infty} x^{1 + \frac{1}{n}} t^{2 - \delta_{n}} 
	.\] 
	Note that the support of \(f\), the set \(\{2 - \delta_{n}\}\),
	is well-ordered.
	We have
	\[
	|f|_{\rho} = \sup \{\mu^{1 + \frac{1}{n}} \rho^{2 - \delta_{n}}\}
	= \mu \rho^{2}
	,\] 
	yet the norm of every term of \(f\) is less than 
	\(\mu\rho^{2}\).
\end{ex}

Example \ref{ex:gauss:sup} is not necessarily an obstacle
to the Gauss (semi-)norms on \(R\malcevt\) and \(R\malcevp\) 
being multiplicative.
However, it is an obstacle to the standard proofs of
multiplicativity.
Later in the proof, we will see that the real difficulty
is not the fact that the norm is not achieved,
but that there are ``coefficients with arbitrarily close norm''
to the norm of \(f\).
To reason about such elements, we introduce the following
notion.

For the following, let \(f = \sum_{i}^{} a_{i}r^{i} \) 
be an element of the \(t\)- or 
\(p\)-adic Mal'cev-Neumann series.
If one wants, one may only think about the \(p\)-adic series, the
proofs for \(t\)-adic are strictly easier.

\begin{defn}
	The \(\argnorm\) of an element \(f\) is the smallest 
	value \(i\) for which one of the following happens.
	\begin{itemize}
		\item The Gauss norm of \(f\) is achieved at index \(i\). 
		\item There exists a sequence \(i_{n} \to i\) 
			which achieves the Gauss norm of \(i\).
	\end{itemize}
	Such a smallest \(i\) exists by the well-ordered property.
\end{defn}

In addition, we fix a \(\rho\) 
and fix the following notation:
let \(\restrint{I} f\) be the 
element with coefficients
of \(f\) supported on \(I\).
Let \(\restr{I}{N} f\) be the element
whose coefficients correspond to those
of \(f\) such that \(i \in I\) and 
\(N \leq |a_{i}|\rho^{i}\).

We record two lemmas
\begin{lem}
	\label{lem:malcevelts:box}
	[Box Lemma]
	There exist numbers \(\epsilon_{a}, \delta_{a}\), 
	depending only on \(f\), for which 
	\[
	\restr{I}{|f| - \delta_{a}} f = 0
	\] 
	for \(I = (\argnorm f, \argnorm f + \epsilon_{a})\).
\end{lem}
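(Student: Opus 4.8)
The plan is to reduce the statement entirely to the well-orderedness of the support of \(f\). The underlying principle is that a well-ordered subset \(S \subseteq \mathbb{R}\) can have no accumulation point from the right: given any \(\alpha \in \mathbb{R}\), either \(S\) has no element exceeding \(\alpha\), or the set \(\{i \in S : i > \alpha\}\), being a nonempty subset of the well-ordered set \(S\), has a least element \(\beta > \alpha\), in which case \(S \cap (\alpha, \beta) = \emptyset\).

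First I would fix a representation \(f = \sum_i a_i r^i\) whose support \(S = \{i : a_i \neq 0\}\) is well-ordered. For the \(t\)-adic Mal'cev--Neumann series this is immediate from the definition; for the \(p\)-adic series I would use the canonical representation provided by Lemma \ref{lem:malcevp:unique} (following Poonen), whose support is again well-ordered. Set \(\alpha := \argnorm f\).

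Next, apply the principle above to this \(\alpha\): choose \(\epsilon_a := \beta - \alpha > 0\) when the least element \(\beta\) of \(\{i \in S : i > \alpha\}\) exists, and \(\epsilon_a := 1\) otherwise, so that \(S \cap (\alpha, \alpha + \epsilon_a) = \emptyset\). Then set \(\delta_a\) to be any fixed positive number below \(|f|\), say \(\delta_a := |f|/2\). With \(I = (\argnorm f, \argnorm f + \epsilon_a)\), the element \(\restr{I}{|f| - \delta_a} f\) is by definition supported on \(\{i : i \in I,\ |a_i|\rho^i \geq |f| - \delta_a\} \subseteq S \cap I = \emptyset\), hence equals \(0\). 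Since \(\epsilon_a\) and \(\delta_a\) were built from \(f\) (and the fixed radius \(\rho\)) alone, this proves the lemma.

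The only step needing genuine care is the first one in the \(p\)-adic setting: one must ensure that the representation of \(f\) used to compute \(|f|\), \(\argnorm f\) and \(\restr{I}{N}(-)\) is the one with well-ordered support, so that the ``no right-accumulation'' principle applies to exactly the indices in play. Once that is pinned down via Lemma \ref{lem:malcevp:unique}, there is no further obstacle: the interval \(I\) contains no coefficients of \(f\) whatsoever, so the threshold \(|f| - \delta_a\) is irrelevant beyond being positive, and indeed any positive \(\delta_a < |f|\) works equally well.
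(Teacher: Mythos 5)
Your proof is correct and follows essentially the same route as the paper: both arguments boil down to the fact that well-orderedness of the support forbids accumulation onto \(\argnorm f\) from the right, so the interval \((\argnorm f, \argnorm f + \epsilon_{a})\) contains no support points at all (which is why, as you note, the threshold \(|f|-\delta_{a}\) plays no real role). The only cosmetic differences are that you argue directly rather than by contradiction and that you make explicit the appeal to Lemma \ref{lem:malcevp:unique} for a well-ordered representation in the \(p\)-adic case, which the paper leaves implicit.
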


\begin{proof}
	If there did not exist such numbers,
	we would have that for all \(\epsilon\) 
	greater than \(0\), there exists a nonzero
	coefficient (i.e. an \(i\) for which \(a_{i} \neq 0\)),
	between \(\argnorm f\) and \(\argnorm f + \epsilon\),
	which contradicts well-orderedness of the support
	of \(f\).
\end{proof}

\begin{lem}
	\label{lem:malcevelts:bar}
	[Bar Lemma]
	Given an \(\epsilon\) small enough, there exists a 
	\(\delta_{b,\epsilon}\) for which 
	\[
		\restr{I}{|f| - \delta_{b}} f = 0
	\] 
	for \(I = [0, \argnorm f - \delta_{b, \epsilon})\).
	If \(\epsilon\) is a fixed \(\epsilon_{b}\),
	then we shorten the notation to
	\(\delta_{b} := \delta_{b,\epsilon_{b}}\).
\end{lem}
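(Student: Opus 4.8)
The plan is to isolate the single genuine assertion behind the lemma---that once one moves a fixed positive distance to the left of $\argnorm f$, every term of $f$ is \emph{strictly} smaller in size than $|f|_\rho$---and then to read $\delta_b$ off the resulting slack. Write $|f|$ for $|f|_\rho$; we may assume $|f|<\infty$, and $|f|>0$ since $f\neq 0$. For $i$ in the support write $w_i=|a_i|\rho^i$ for the size of the $i$-th term, with $|a_i|$ read as $|a_i|_\rho^{\mathrm{base}}$ in the $p$-adic case. The claim to isolate is
\[
	M_\delta\colonequals\sup\bigl\{\,w_i : i\in\supp f,\ i<\argnorm f-\delta\,\bigr\}<|f|\qquad\text{for every }\delta>0
\]
(with the convention $\sup\emptyset\colonequals 0$). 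Granting this, the lemma is immediate: with $\epsilon$ the distance by which we stay to the left of $\argnorm f$ (``small enough'' only in that $\argnorm f-\epsilon>0$, the statement being vacuous otherwise), put $\delta_b=\delta_{b,\epsilon}\colonequals\tfrac{1}{2}\bigl(|f|-M_\epsilon\bigr)>0$. Every $i\in\supp f$ with $i<\argnorm f-\epsilon$ then has $w_i\le M_\epsilon=|f|-2\delta_b<|f|-\delta_b$, so no such $i$ survives in $\restr{[0,\,\argnorm f-\epsilon)}{|f|-\delta_b}f$, which is therefore $0$.

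To prove the displayed claim, argue by contradiction. Suppose $M_\delta=|f|$ for some $\delta>0$. Then there is a sequence of indices $i_n\in\supp f$ with $i_n<\argnorm f-\delta$ and $w_{i_n}\to|f|$. All the $i_n$ lie in the compact real interval $[0,\argnorm f-\delta]$ (here, as already in the definition of the Gauss norm, we regard exponents as real numbers), so by Bolzano--Weierstrass we may pass to a subsequence with $i_n\to L$ for some $L\in[0,\argnorm f-\delta]$. If this subsequence is eventually constant equal to $L$, then $w_L=|f|$, i.e.\ the Gauss norm of $f$ is achieved at index $L$; otherwise the nonconstant convergent sequence $i_n\to L$ satisfies $w_{i_n}\to|f|$, which is exactly the second clause in the definition of $\argnorm$. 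In either case $L$ is an admissible index, whence $\argnorm f\le L\le\argnorm f-\delta<\argnorm f$---a contradiction. Hence $M_\delta<|f|$ for every $\delta>0$.

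This is best understood against the Box Lemma, which locates where the actual content lies. To the \emph{right} of $\argnorm f$, well-orderedness of $\supp f$ supplies an honest interval meeting $\supp f$ in the empty set, so the corresponding one-sided statement is purely order-theoretic. To the \emph{left} there need be no such gap---the support may accumulate at $\argnorm f$, exactly as in Example~\ref{ex:gauss:sup}---so one cannot reason on the index set alone and must instead play the compactness extraction off against the \emph{minimality} built into the definition of $\argnorm$; this is the crux of the argument. The only delicate points are the case split above (eventually constant versus genuinely convergent subsequence, matching the two clauses defining $\argnorm$) and the remark that the limit index $L$ need not itself lie in $\supp f$---which is harmless, since neither the definition of $\argnorm$ nor the operators $\restrint{I}$ and $\restr{I}{N}$ ever require the distinguished index to be a support index; they constrain only which interval it lies in and the sizes of the coefficients.
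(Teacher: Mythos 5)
Your proof is correct and follows essentially the same route as the paper's: negate the statement to produce a sequence of indices to the left of $\argnorm f-\epsilon$ whose term norms tend to $|f|$, pass to a convergent subsequence, and contradict the minimality in the definition of $\argnorm$. The paper gives only this two-line sketch; your write-up supplies the details it leaves implicit (the extraction of an explicit $\delta_{b,\epsilon}$ from the strict gap $M_\epsilon<|f|$, and the case split matching the two clauses of the $\argnorm$ definition), but the underlying argument is the same.
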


\begin{proof}
	If the negation of this lemma holds, then one can find a sequence
	\(i_{n}\) such that \(i_{n} < \argnorm f - \epsilon\) for all \(n\),
	but \(\sup { |a_{i}| \rho^{i} } = |f|\).
	This contradicts the fact that \(\argnorm f\) is the argument
	of the norm (e.g. pass to a convergent subsequence).
\end{proof}

\begin{figure}[h]
\centering
\includegraphics[width=0.5\textwidth]{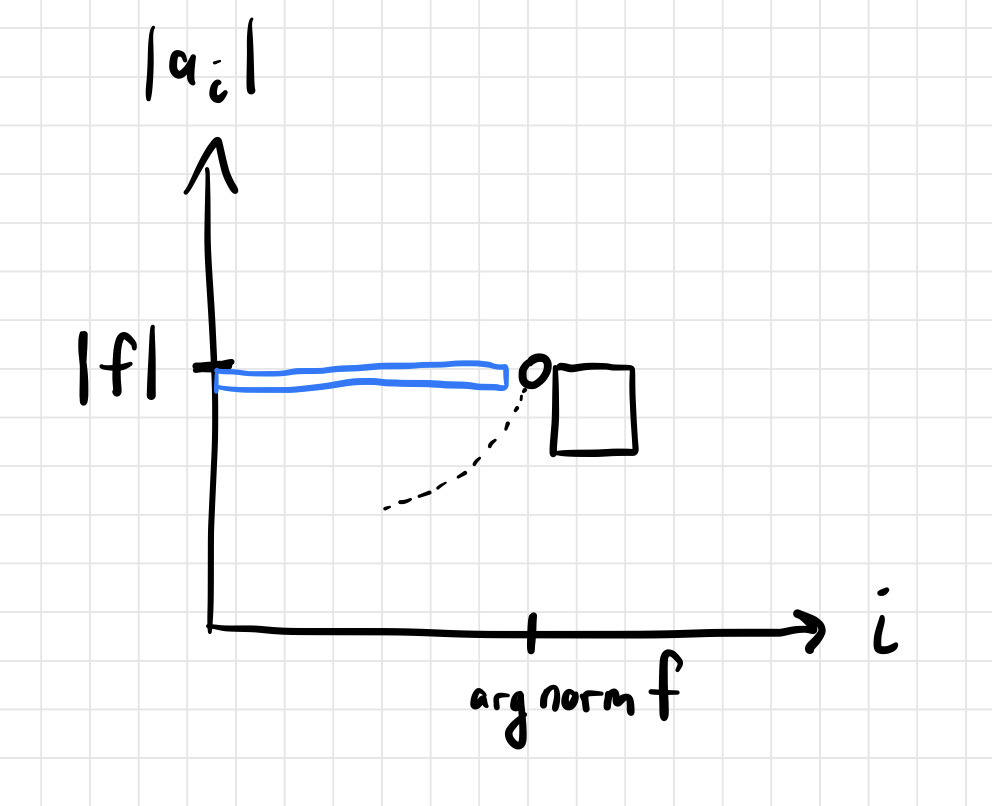}
\caption{A norm graph of an element in the Mal'cev-Neumann ring
with Gauss norm not achieved at a maximum.
There are no coefficients in the black box by the 
Box Lemma (\ref{lem:malcevelts:box}) and there are
no coefficients in the blue rectangle by the Bar Lemma 
(\ref{lem:malcevelts:bar}).}
\end{figure}

\begin{hyp}
	For the rest of the section, assume \(R\) is \(p\)-complete
	if we are in the \(p\)-adic case
	(i.e. \(f \in R\malcevp\)).
	This is used for the invocation of 
	Lemmas \ref{lem:folklore:malcevp} and \ref{lem:malcevp:unique}.
\end{hyp}

Our proofs of the properties of the Gauss norm in the
\(t\)-adic case rest on using properties of the standard
formula for a product
\[
\left( \sum_{i}^{} a_{i}t^{i}  \right)
\left( \sum_{j}^{} b_{j}t^{j}  \right)
= \sum_{k}^{} \sum_{i+j=k}^{} a_{i}b_{j} t^{k}  
.\] 
However, for the \(p\)-adic case we don't have this
formula, as we must account for ``\(p\)-adic carrying''.
Thus, we use the following slightly weaker property
which applies in the \(p\)-adic case (and of 
course still holds for the \(t\)-adic 
Mal'cev-Neumann series).

\begin{lem}
	\label{lem:padic:elts}
	Let 
	\[
	f = \sum_{i}^{} a_{i}r^{i}, 
	g = \sum_{j}^{} b_{j}r^{j} 
	\] 
	be elements of the 
	\(t\)-adic or \(p\)-adic Mal'cev-Neumann series
	(\(R\malcevt\) or \(R\malcevp\)),
	with \(r = t\) or \(p\) respectively.
	Then we have the following property:

	\[
	fg = \sum_{k}^{} \sum_{k^{\prime} \leq k}^{} 
	\left( \sum_{i + j = k, (i,j) \in S_{f,g,k^{\prime}}}^{} a_{i}b_{j} r^{k}  \right)
	,\] 
	where \(S_{f,g,k^{\prime}}\) is a finite subset of 
	\(\Gammanonneg \times \Gammanonneg\)
	such that 
	\(\bigcup_{k^{\prime} \leq k}^{} S_{f,g,k^{\prime}} \) 
	is finite.

	Roughly, \(S_{f,g,k^{\prime}}\) is the 
	set of indices that, after \(r\)-adic carrying 
	has been done, end up in the \(r^{k}\)-th place.
\end{lem}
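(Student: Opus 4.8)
The plan is to prove Lemma \ref{lem:padic:elts} by first treating the $t$-adic case (where it is essentially the usual product formula) and then reducing the $p$-adic case to it via the presentation $R\malcevp = R\malcevt / N$, keeping careful track of the "carrying'' that occurs when one reduces a representative $\sum a_i t^i$ modulo null series into the canonical form $\sum a_i p^i$ with $p \nmid a_i$ from Lemma \ref{lem:malcevp:unique}.

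First I would handle the $t$-adic case. Here the classical product formula
\[
fg = \sum_{k} \left( \sum_{i+j=k} a_i b_j \right) t^k
\]
holds, and the inner sum is finite for each $k$ because the supports of $f$ and $g$ are well-ordered: for fixed $k$, the pairs $(i,j)$ with $i+j = k$, $i \in \supp f$, $j \in \supp g$ form an antichain-like set that must be finite (this is exactly the standard argument that $R\malcevt$ is closed under multiplication). So in this case one simply takes $S_{f,g,k'} = \emptyset$ for $k' < k$ and $S_{f,g,k}$ equal to the finite set of relevant index pairs $(i,j)$ with $i+j=k$; the union $\bigcup_{k' \le k} S_{f,g,k'}$ is then just $S_{f,g,k}$, which is finite. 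The claimed identity is immediate.

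Next, for the $p$-adic case, I would lift $f$ and $g$ to representatives $\tilde f, \tilde g \in R\malcevt$ and compute $\tilde f \tilde g$ using the $t$-adic formula just established, obtaining a well-ordered-support series $\sum_k c_k t^k$ with each $c_k = \sum_{i+j=k} a_i b_j$ a finite sum. Now I pass to $R\malcevp$: reducing $\sum_k c_k p^k$ to canonical form involves rewriting each $c_k \in R$, when $p \mid c_k$, as $c_k = p \cdot c_k'$ and pushing the contribution $c_k' p^{k+1}$ forward — and more generally, iterating this "$p$-adic carrying'' process, possibly infinitely often, using $p$-completeness of $R$ to guarantee convergence. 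The point of the statement is bookkeeping: a monomial $a_i b_j p^{i+j}$ contributes, after all carrying, to finitely many slots $p^k$ with $k \ge i+j$, and for each target slot $k$ only finitely many source products $a_i b_j$ (those with $i+j = k' \le k$) can reach it. I would define $S_{f,g,k'}$ to be precisely the set of pairs $(i,j)$ with $i+j = k'$ whose product $a_i b_j$, after carrying, lands in (among others) the $p^k$-slot for the relevant $k$; more cleanly, $S_{f,g,k'}$ collects the index pairs originating at level $k'$ that can migrate up to level $k$. Finiteness of $\bigcup_{k' \le k} S_{f,g,k'}$ follows because $c_{k'}$ is a finite sum for each $k'$ and only levels $k' \le k$ feed into level $k$; one checks that there are only finitely many such levels with a nonzero contribution reaching level $k$ using well-orderedness of $\supp(\tilde f \tilde g)$ below $k$ together with the fact that carrying from level $k'$ reaches level $k$ only if $k - k' \le v_p(c_{k'})$ is bounded appropriately — in fact it suffices that $\{k' \le k : c_{k'} \neq 0\}$ is finite, which holds because a well-ordered subset of $\Gammanonneg$ below a fixed $k$ need not be finite, so here I would instead argue that only levels $k'$ with $k - k' \in \mathbb{Z}_{\ge 0}$ and $c_{k'} \ne 0$ contribute (carrying moves exponents by integers), and for each residue class this is a descending-in-$\mathbb{Z}$ hence finite collection.

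The main obstacle I anticipate is making the last finiteness claim fully rigorous: one must be careful that "$p$-adic carrying'' only shifts exponents by nonnegative \emph{integers}, so that the fiber over a given $k \in \Gammanonneg$ under the carrying map lies in the coset $k + (\mathbb{Z}_{\le 0})$, which is order-isomorphic to $\mathbb{Z}_{\le 0}$ and hence well-orderedness of $\supp(\tilde f\tilde g)$ forces only finitely many relevant source levels; combined with finiteness of each $c_{k'}$ as a sum of products $a_i b_j$, this yields the finiteness of $\bigcup_{k' \le k} S_{f,g,k'}$. I would also want to double-check that the decomposition of $fg$ into the displayed double sum is genuinely an equality in $R\malcevp$ and not merely up to a null series, which is where Lemma \ref{lem:malcevp:unique} (uniqueness of the $p \nmid a_i$ expansion) and the $p$-completeness hypothesis enter. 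Everything else is routine once the carrying is organized level-by-level in $\mathbb{Z}$-steps.
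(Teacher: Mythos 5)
Your proposal is correct and takes essentially the same route as the paper: the $t$-adic case is the standard product formula with empty carry sets, and the $p$-adic case multiplies representatives as $t$-adic series and then tracks the carries, using that carrying shifts exponents only by integers together with well-orderedness of the support restricted to a coset of $\mathbb{Z}$ to get the finiteness of $\bigcup_{k' \leq k} S_{f,g,k'}$. The paper packages this via Lemma \ref{lem:folklore:malcevp} (the presentation of $R\malcevp$ as $\widehat{R\malcevt}/(t-p)$) and the uniqueness statement of Lemma \ref{lem:malcevp:unique}, exactly as you do.
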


\begin{proof}
	For \(t\)-adic series, just take 
	\(S_{f,g,k^{\prime}}\) to be zero 
	for \(k^{\prime} \leq k\) and use
	the standard formula for products
	stated above.

	For \(p\)-adic series, we can use
	Lemma \ref{lem:folklore:malcevp}
	to see that we may first 
	multiply elements as \(t\)-adic series, and
	then quotient by \(t-p\).
	we can ``collect in terms of \(p\)''
	(by the uniqueness of the representation in 
	\(R\malcevp\), Lemma \ref{lem:malcevp:unique}).
	As the quotient is canonically isomorphic to Poonen's
	definition, the
	elments \(f\) and \(g\) must have well-ordered support.
	However, all possible \(p\)-adic carry operations
	happen in the set \(k\mathbb{Z}\),
	and thus \(k\mathbb{Z} \cap \sprt(f)\) must
	have a minimum element, and respectively for \(g\).
	Thus, the sum over
	all the \(S_{f,g,k^{\prime}}\) must be finite.
\end{proof}

\begin{lem}
	\label{lem:malcev:prodsupp}
	Let \(f, g\) be two elements in 
	\(R\malcevt\) or \(R\malcevp\).
	Then \(\supp fg\) is contained
	in \((\supp f + \supp g)\mathbb{N}\).
\end{lem}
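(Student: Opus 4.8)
The plan is to treat the two cases separately and to reduce the $p$-adic case to the $t$-adic one via Lemma~\ref{lem:folklore:malcevp}. Throughout I read $(\supp f+\supp g)\mathbb{N}$ as the upward closure $\{\,s+n : s\in\supp f+\supp g,\ n\in\mathbb{Z}_{\geq 0}\,\}$, i.e.\ the set of exponents to which $p$-adic carrying can transport a term originally sitting at exponent $s$; with that reading the $t$-adic inclusion will in fact be the stronger statement $\supp(fg)\subseteq\supp f+\supp g$.

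For $t$-adic series this is immediate from the standard product formula: the coefficient of $t^k$ in $fg$ is the finite sum $\sum_{i+j=k}a_ib_j$, so if it is nonzero then some $a_ib_j\neq 0$, whence $i\in\supp f$, $j\in\supp g$, and $k=i+j\in\supp f+\supp g\subseteq(\supp f+\supp g)\mathbb{N}$.

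For the $p$-adic case, first use Lemma~\ref{lem:malcevp:unique} to write $f=\sum_i a_ip^i$, $g=\sum_j b_jp^j$ with $p\nmid a_i,b_j$, and lift these to $\tilde f=\sum_i a_it^i$, $\tilde g=\sum_j b_jt^j$ in $R\malcevt$, so that $\supp\tilde f=\supp f$ and $\supp\tilde g=\supp g$; by Lemma~\ref{lem:folklore:malcevp} the image of $h:=\tilde f\tilde g$ in $R\malcevp=\widehat{R\malcevt}/(t-p)$ is $fg$. By the $t$-adic case, $\supp h\subseteq S:=\supp f+\supp g$, so it suffices to show that for any $h\in R\malcevt$ with $\supp h\subseteq S$ the image $\bar h\in R\malcevp$ satisfies $\supp\bar h\subseteq S+\mathbb{Z}_{\geq 0}$. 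Writing $h=\sum_i c_it^i$ and grouping by cosets of $\mathbb{Z}\subseteq\Gamma$, for a representative $\gamma\in\Gamma\cap[0,1)$ put $h_\gamma=\sum_{n\geq 0}c_{\gamma+n}p^n$; this converges in $\widehat R$ because $R$ is $p$-complete (the standing hypothesis of this section) and $v_p(c_{\gamma+n}p^n)\geq n$. Expanding $h_\gamma=\sum_{m\geq 0}d_{\gamma,m}p^m$ with $p\nmid d_{\gamma,m}$, uniqueness (Lemma~\ref{lem:malcevp:unique}) identifies $\bar h=\sum_{\gamma,m}d_{\gamma,m}p^{\gamma+m}$ as the reduced form, so $\supp\bar h=\{\gamma+m : d_{\gamma,m}\neq 0\}$.

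It then remains to bound where the digits can be nonzero, and this is the one point requiring care: $p$-adic reduction can only move a term to a strictly higher exponent, by a non-negative \emph{integer} amount, and this is exactly what a valuation estimate records. If the coset of $\gamma$ misses $\supp h$, then $h_\gamma=0$ and there is nothing to check; otherwise let $n_0=\min\{n\geq 0 : c_{\gamma+n}\neq 0\}$, which exists since $\supp h$ is well-ordered, so that $\gamma+n_0\in\supp h\subseteq S$. The ultrametric inequality together with $v_p\geq 0$ on $R$ gives $v_p(h_\gamma)\geq\min_{c_{\gamma+n}\neq 0}\bigl(v_p(c_{\gamma+n})+n\bigr)\geq n_0$, and $d_{\gamma,m}\neq 0$ forces $m\geq v_p(h_\gamma)\geq n_0$; hence $\gamma+m=(\gamma+n_0)+(m-n_0)\in S+\mathbb{Z}_{\geq 0}$, as desired. (The same computation is what underlies the finiteness of $\bigcup_{k'\le k}S_{f,g,k'}$ in Lemma~\ref{lem:padic:elts}, so one could alternatively phrase the argument directly in terms of that lemma.)
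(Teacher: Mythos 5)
Your proof is correct in substance and follows essentially the same route as the paper: the \(t\)-adic case from the product formula, and the \(p\)-adic case by performing the multiplication \(t\)-adically and then analyzing how reduction modulo \(t-p\) (Lemma \ref{lem:folklore:malcevp}), i.e.\ \(p\)-adic carrying, moves exponents. The paper dispatches the second step in one sentence (``carry operations may cause an index in the support to increase by an integer''); you supply the coset-by-coset bookkeeping that makes this precise, and your reading of \((\supp f+\supp g)\mathbb{N}\) as \(\supp f+\supp g+\mathbb{Z}_{\geq 0}\) is the intended one.

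One intermediate claim is stated too strongly: it is not true in general that \(d_{\gamma,m}\neq 0\) forces \(m\geq v_p(h_\gamma)\). If the base ring has a non-discrete valuation (e.g.\ \(R=\mathcal{O}_K\) for \(K\) perfectoid, which is the case of interest), the leading digit can itself have positive fractional valuation: for \(h_\gamma=p^{1/2}\) the canonical expansion has \(d_{\gamma,0}=p^{1/2}\neq 0\) while \(v_p(h_\gamma)=\tfrac12>0\). This does not endanger your conclusion, which only needs \(m\geq n_0\) with both integers, but the cleanest repair avoids valuations altogether: since \(R\) is \(p\)-complete, \(h_\gamma=p^{n_0}y\) with \(y=\sum_{n\geq n_0}c_{\gamma+n}p^{n-n_0}\in R\); expanding \(y\) in its canonical \(p\)-adic form (which exists and is unique because \(R\) embeds into \(W(R/p)\), exactly as in the proof of Lemma \ref{lem:malcevp:unique}) and multiplying by \(p^{n_0}\) produces a canonical expansion of \(h_\gamma\) supported in indices \(\geq n_0\), so by uniqueness \(d_{\gamma,m}=0\) for \(m<n_0\). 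Alternatively, your valuation estimate can be salvaged by noting \(m_1\geq n_0\) follows from \(v_p(h_\gamma)\geq n_0\), \(v_p(h_\gamma)<m_1+1\), and integrality of \(m_1,n_0\), though the inequality \(v_p(h_\gamma)<m_1+1\) itself requires knowing that elements of valuation \(\geq 1\) are divisible by \(p\), an assumption the uniqueness argument does not need.
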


\begin{proof}
	If \(f\) and \(g\) are in \(R\malcevt\),
	then by the product identity the support
	of \(fg\) is in \(\supp f + \supp g\).
	On the other hand if they are in \(R\malcevp\),
	\(p\)-adic carry operations may 
	cause an index in the support to increase by 
	an integer.
\end{proof}

For the following, fix \(\rho\) with \(0 < \rho \leq 1\).

\begin{lem} \label{lem:malcevt:tri}
	Let \(R\) be a valued ring.
	Then the Gauss norm on \(R\malcevt\) 
	(resp. \(R\malcevp\)) satisfies the 
	strong triangle inequality.
\end{lem}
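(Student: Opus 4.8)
The plan is to treat the two cases separately: the $t$-adic case is a one-line termwise estimate, and the $p$-adic case reduces to it once the effect of $p$-adic carrying on the Gauss norm is under control.

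For $R\malcevt$, write $f = \sum_i a_i t^i$ and $g = \sum_i b_i t^i$; then $f + g = \sum_i (a_i + b_i)t^i$, with support contained in $\supp f \cup \supp g$ and hence still well-ordered. Since the valuation on $R$ is non-archimedean, $|a_i + b_i| \le \max\{|a_i|, |b_i|\}$ for every $i$, so
\[
|a_i + b_i|\rho^i \;\le\; \max\{|a_i|\rho^i,\ |b_i|\rho^i\} \;\le\; \max\{|f|_\rho,\ |g|_\rho\},
\]
and taking the supremum over $i$ gives $|f+g|_\rho \le \max\{|f|_\rho, |g|_\rho\}$. This settles the $t$-adic case, which is entirely routine.

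For $R\malcevp$, the reduction rests on one claim: for \emph{any} expansion $h = \sum_i c_i p^i$ with $c_i \in R$ (not necessarily one satisfying $p \nmid c_i$), one has $|h|_\rho \le \sup_i |c_i|_\rho^{\text{base}}\rho^i$. Granting this, write $f$ and $g$ in the canonical form of Lemma~\ref{lem:malcevp:unique}, $f = \sum_i a_i p^i$ and $g = \sum_i b_i p^i$ with $p \nmid a_i, b_i$, and add termwise to get the (generally non-canonical) expansion $f+g = \sum_i (a_i + b_i)p^i$. Then, using that each $|\cdot|_\rho^{\text{base}}$ is a non-archimedean valuation on $R$ and that $|a_i|_\rho^{\text{base}}\rho^i$ and $|b_i|_\rho^{\text{base}}\rho^i$ are by definition the terms computing $|f|_\rho$ and $|g|_\rho$,
\[
|f+g|_\rho \;\le\; \sup_i |a_i + b_i|_\rho^{\text{base}}\rho^i \;\le\; \sup_i \max\{|a_i|_\rho^{\text{base}}\rho^i,\ |b_i|_\rho^{\text{base}}\rho^i\} \;\le\; \max\{|f|_\rho,\ |g|_\rho\}.
\]

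The claim itself — equivalently, that replacing a $p$-adic expansion by its canonical form does not raise $\sup_i |c_i|_\rho^{\text{base}}\rho^i$, i.e. that $p$-adic carrying is non-increasing for the Gauss norm — is where the work lies, and I expect it to be the main obstacle. It is the same bookkeeping as in Lemma~\ref{lem:padic:elts}: since $\supp h$ is well-ordered, each coset $\gamma + \mathbb{Z}$ meets it in a set with a least element, so within each coset carries propagate only upward and reach only finitely far below any fixed exponent; a single carry replaces a coefficient $c$ in place $i$ by some $c'$ in place $i$ together with $pd$ in place $i+1$, where $|c'|_\rho^{\text{base}} \le |c|_\rho^{\text{base}}$ and $|d|_\rho^{\text{base}}\rho \le |c|_\rho^{\text{base}}$ (using $|p|_\rho^{\text{base}} = \rho$), so no carry enlarges any Gauss weight; and the possibly infinite cascade converges because $R$, hence $R\malcevp$, is $p$-complete by the standing Hypothesis. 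One may also package this via Lemma~\ref{lem:folklore:malcevp}: it exhibits $R\malcevp$ as $\widehat{R\malcevt}/(t-p)$ with $|\cdot|_\rho$ the residue norm of the radius-$\rho$ $t$-adic Gauss norm, and residue norms of non-archimedean norms are non-archimedean — but checking that $|\cdot|_\rho$ really is that residue norm again comes down to the same carrying estimate.
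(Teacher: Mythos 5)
Your proposal is correct and follows essentially the same route as the paper: a termwise non-archimedean estimate for the $t$-adic case, and for the $p$-adic case a reduction to the observation that $p$-adic carrying (equivalently, passing from an arbitrary expansion to the canonical one of Lemma~\ref{lem:malcevp:unique}, or working modulo $t-p$ via Lemma~\ref{lem:folklore:malcevp}) does not increase the Gauss norm. Your write-up just spells out the carrying estimate in more detail than the paper does.
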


\begin{proof}
	In the \(t\)-adic case, this follows from
	the formula for the sum of two elements
	with the strong triangle inequality 
	on \(R\).

	In the \(p\)-adic case, by 
	Lemma \ref{lem:folklore:malcevp}
	we may first do the addition using the
	simpler \(t\)-adic formula,
	and then do ``\(p\)-adic carrying''.
	However, any \(p\)-adic carry 
	cannot increase the norm as \(\rho \leq 1\).
\end{proof}

\begin{cor} \label{cor:malcevt:submult}
	The Gauss norm on \(R\malcevt\) 
	(resp. \(R\malcevp\)) is 
	submultiplicative.
\end{cor}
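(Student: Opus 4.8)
The plan is to reduce submultiplicativity, $|fg|_\rho \le |f|_\rho|g|_\rho$, to a coefficientwise bound, and then to deduce the $p$-adic case from the (easier) $t$-adic case exactly as in the proof of Lemma \ref{lem:malcevt:tri}.

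First I would treat $R\malcevt$. Writing $f = \sum_i a_i t^i$, $g = \sum_j b_j t^j$ and $fg = \sum_k c_k t^k$, the standard product identity gives $c_k = \sum_{i+j=k} a_i b_j$, which is a \emph{finite} sum by the same well-ordered-support input that already makes the multiplication on $R\malcevt$ well defined. Since $R$ is a valued ring, the strong triangle inequality yields $|c_k| \le \max_{i+j=k}|a_i b_j| \le \max_{i+j=k}|a_i|\,|b_j|$ using submultiplicativity of the norm on $R$; multiplying by $\rho^k = \rho^i\rho^j$ and using $|a_i|\rho^i \le |f|_\rho$, $|b_j|\rho^j \le |g|_\rho$ by the very definition of the Gauss norm as a supremum, we get $|c_k|\rho^k \le |f|_\rho|g|_\rho$ for all $k$, and taking the supremum over $k$ finishes this case.

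For $R\malcevp$ I would invoke Lemma \ref{lem:folklore:malcevp} to realize it as $\widehat{R\malcevt}/(t-p)$, so that $fg \in R\malcevp$ is obtained by first forming the $t$-adic product $h = \sum_k c_k t^k$ — to which the previous paragraph applies with the base norms $|\cdot|_\rho^{\text{base}}$, giving $|h|_\rho \le |f|_\rho|g|_\rho$ — and then reducing modulo $t-p$, i.e. doing $p$-adic carrying. The point, just as in the proof of Lemma \ref{lem:malcevt:tri}, is that this reduction cannot increase the Gauss norm when $\rho \le 1$: collecting a coset $\sum_n c_{\gamma+n}t^{\gamma+n}$ into its canonical expansion $\sum_m a'_{\gamma+m}p^{\gamma+m}$ (Lemma \ref{lem:malcevp:unique}), the defining properties of a family of $p$-adic norms — in particular $|p|_\rho^{\text{base}} = \rho$ and the Gauss-type behaviour inherited from the embedding into $W(R/p)$ — give $|a'_{\gamma+m}|_\rho^{\text{base}}\rho^{\gamma+m} \le \sup_n |c_{\gamma+n}|_\rho^{\text{base}}\rho^{\gamma+n} \le |h|_\rho$, and a supremum over all cosets and all $m$ yields $|fg|_\rho \le |h|_\rho \le |f|_\rho|g|_\rho$. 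Alternatively one can argue directly from Lemma \ref{lem:padic:elts}: each coefficient of $fg$ is still a finite sum of products $a_i b_j$, so the strong triangle inequality plus submultiplicativity of the base norm apply verbatim, the only additional input being that the index carried into is $\ge i+j$ while $\rho \le 1$.

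I expect the main obstacle to be precisely this $p$-adic carrying step — certifying that collecting $\sum a_i b_j p^{i+j}$ into canonical form does not inflate the Gauss norm. This hinges on the base norms being "Gauss-like" on $p$-adic expansions (so that an expansion's base norm dominates each digit's contribution), which is exactly why the hypothesis that $R$ embeds into $W(R/p)$, equivalently that $R$ carries a genuine family of $p$-adic norms (Definition \ref{defn:normfamily:padic}), is used here, and why it is cleaner to route everything through Lemmas \ref{lem:folklore:malcevp}, \ref{lem:malcevp:unique}, and \ref{lem:padic:elts} than to manipulate $p$-adic products by hand.
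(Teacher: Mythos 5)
Your proposal is correct and is essentially the paper's argument: the paper proves both cases at once by applying the product decomposition of Lemma \ref{lem:padic:elts} (each coefficient of $fg$ is a sum of terms $a_i b_j r^{i+j+n}$ with $n \neq 0$ only under $p$-adic carrying), then the strong triangle inequality, submultiplicativity on $R$, and $\rho \le 1$ — which is exactly the ``alternative'' route you sketch at the end of your second paragraph. Your longer detour through Lemma \ref{lem:folklore:malcevp} and the coset-by-coset carrying analysis is just an unpacking of what Lemma \ref{lem:padic:elts} already packages, so there is no substantive difference.
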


\begin{proof}
	Using the above formulas for the 
	product of two elements 
	\(f = \sum_{i}^{} a_{i}r^{i} \) 
	and \(g = \sum_{j}^{} b_{j}r^{j} \)
	we see that each coordinate is a sum
	of elements of the form \(a_{i}b_{j}r^{i+j+n}\),
	where \(n \neq 0\) only if there is a 
	\(p\)-adic carry operation.
	Now, use the strong triangle inequality
	and the fact that \(\rho \leq 1\)
	to conclude that the norm of 
	\(fg\) must be less than or equal to the greatest of
	these coordinates, but
	by submultiplicativity on \(R\) this norm
	is less than or equal to \(|f||g|\).
\end{proof}

\begin{lem} \label{lem:malcevt:cont}
	The Gauss norms on
	\(R\malcevp\)
	for \(0 < \rho \leq 1\) 
	form a continuous family.
\end{lem}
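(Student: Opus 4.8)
The plan is to reduce, for a fixed \(f \in R\malcevp\), the continuity of the map \(\rho \mapsto |f|_\rho\) on \((0,1]\) to two elementary facts: a supremum of affine functions is convex, and a non-decreasing lower-semicontinuous function is left-continuous.

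First I would put \(f\) in the canonical form \(f = \sum_{i \in S} a_i p^i\) with \(p \nmid a_i\) and \(S = \supp f\) well-ordered, which exists and is unique by Lemma \ref{lem:malcevp:unique} and is the representation used in the definition of the \(p\)-adic Gauss norm. By the second bullet of Definition \ref{defn:normfamily:padic}, since \(p \nmid a_i\) the value \(c_i := |a_i|_\rho^{\mathrm{base}}\) does not depend on \(\rho\), so that
\[
|f|_\rho = g(\rho) := \sup_{i \in S} c_i\, \rho^{i}.
\]
I may assume \(f \neq 0\) (the zero case being trivial), so \(g > 0\), and that \(g\) is finite on the relevant range; this is automatic in all our applications, where the base valuation is bounded, and in general the monotonicity noted below shows the finiteness locus is an initial subinterval of \((0,1]\), so it suffices to argue there.

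Next I would pass to logarithmic coordinates: writing \(s = -\log \rho \in [0,\infty)\), one has
\[
\log g(e^{-s}) = \sup_{i \in S} \bigl( \log c_i - i s \bigr),
\]
a supremum of affine functions of \(s\), hence convex on \([0,\infty)\). Since a convex function is continuous on the interior of its domain, \(s \mapsto \log g(e^{-s})\), and therefore \(g\), is continuous on \((0,1)\).

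Finally I would treat the endpoint \(\rho = 1\), which convexity alone does not handle because a convex function can jump at an endpoint of its domain. Here the point is that \(g\) is non-decreasing on \((0,1]\) — each \(\rho \mapsto c_i \rho^{i}\) is non-decreasing for \(i \geq 0\) and \(\rho \le 1\) — and lower semicontinuous, being a supremum of the continuous functions \(\rho \mapsto c_i \rho^{i}\). Monotonicity gives a left limit at \(1\) that is \(\le g(1)\), while lower semicontinuity gives it \(\ge g(1)\), so \(\lim_{\rho \to 1^-} g(\rho) = g(1)\), which finishes the argument. The only obstacle is thus the boundary point \(\rho = 1\); on the open interval the log-convexity of the Gauss norm in the radius does all the work. (One can also avoid convexity by splitting \(S\) into a bounded part, on which the supremum is continuous by a subsequence argument exploiting well-orderedness, and a tail, which is uniformly small near any \(\rho_0 < 1\) since \(c_i \rho^{i} \leq g(\rho_3)(\rho/\rho_3)^{i} \to 0\) as \(i \to \infty\) for fixed \(\rho_3 \in (\rho_0,1)\); but the route above is shorter.)
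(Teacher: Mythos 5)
Your proof is correct, but it takes a genuinely different and noticeably more careful route than the paper. The paper's own proof is a one-liner: after reducing to coefficients with \(p \nmid a_i\) so that \(|a_i|\) is independent of \(\rho\) (a reduction you also make, via Lemma \ref{lem:malcevp:unique} and Definition \ref{defn:normfamily:padic}), it simply asserts that ``a sup of continuous functions is continuous.'' As stated, that principle is false in general --- a supremum of continuous functions is only lower semicontinuous --- so the paper's argument as written has a gap that your proof actually fills. Your mechanism is the standard one for Gauss norms: in the coordinates \(s = -\log\rho\) the function \(\log|f|_{e^{-s}}\) is a supremum of affine functions of \(s\), hence convex and therefore continuous on the interior, and the endpoint \(\rho = 1\) is handled by combining monotonicity (which gives existence of the left limit and the inequality \(\leq g(1)\)) with lower semicontinuity (which gives \(\geq g(1)\)). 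The price is a small amount of bookkeeping about finiteness of the supremum, which you address adequately; what you buy is an actually complete argument, and your parenthetical remark (bounded part plus uniformly small tail) indicates the alternative repair closest in spirit to what the paper presumably intended. Either of your two routes would be a strict improvement on the justification given in the paper.
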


\begin{proof}
	The function \(\rho \mapsto |a_{i}|\rho^{i}\) is
	continuous, and 
	a sup of continuous functions is continuous.
	Note here that
	in the \(p\)-adic setting,
	\(|a_{i}| = |a _{i}|^{base}_{\rho}\) 
	as in Definition \ref{defn:normfamily:padic}.
	Since we may take \(a_{i}\) 
	such that \(p \nmid a_{i}\),
	\(|a_{i}|\) does not depend on
	the norm which is on \(R\).
\end{proof}

To show that the Gauss norm is multiplicative,
we need one key lemma as an input. 
Essentially, the idea is that one may reduce to
the case where all elements \(f\) are supported 
in a neighborhood of the \(\argnorm\).

\begin{lem}
	\label{lem:only:argnorm}
	Let \(f, g\) be in \(R\malcevt\) or
	\(R\malcevp\), and let 
	\(|\cdot|\) be the Gauss norm
	of radius \(\rho\).
	Let \(I^{h}_{\epsilon} = 
	(\argnorm h - \epsilon, \argnorm h]\) 
	for \(h \in R\malcevt\) (resp. \(R\malcevp\)).
	If the \(h\) is clear from context, we
	will omit it.

	Then there exists an \(\epsilon_{1}, \epsilon_{2}\),
	and a \(\delta\)
	greater than zero 
	with the property
	that 
	\[
	\left| \restr{I_{\epsilon_{1}}}{|f|-\delta} f\right|
	\left| \restr{I_{\epsilon_{2}}}{|g|-\delta} g\right|
	= \left|\left( \restr{I_{\epsilon_{1}}}{|f|-\delta} f \right)
	\left( \restr{I_{\epsilon_{2}}}{|g|-\delta} g \right) \right|
	\implies 
	|f||g| = |fg|
	.\] 
\end{lem}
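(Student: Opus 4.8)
The plan is to derive $|fg| \ge |f|\,|g| =: M$ from the hypothesis; the reverse inequality $|fg| \le |f|\,|g|$ is automatic by Corollary~\ref{cor:malcevt:submult}. Write $\alpha = \argnorm f$ and $\beta = \argnorm g$. Apply the Box Lemma~\ref{lem:malcevelts:box} to $f$ and to $g$ to obtain $\epsilon_a^f, \delta_a^f, \epsilon_a^g, \delta_a^g$, fix $\epsilon$ with $0 < \epsilon < \tfrac12$ and $\epsilon \le \min(\epsilon_a^f, \epsilon_a^g)$, apply the Bar Lemma~\ref{lem:malcevelts:bar} to $f$ and $g$ with this $\epsilon$ to obtain thresholds $\delta_b^f, \delta_b^g$, and set $\epsilon_1 = \epsilon_2 = \epsilon$ and $\delta = \min(\delta_a^f, \delta_a^g, \delta_b^f, \delta_b^g)$. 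Put $f' = \restr{I^f_{\epsilon}}{|f| - \delta} f$, $f'' = f - f'$, and likewise $g', g''$, so that the hypothesis reads $|f'g'| = |f'|\,|g'|$. First I would check that $|f'| = |f|$ (and $|g'| = |g|$): by the definition of $\argnorm$ the norm of $f$ is attained at $\alpha$ or approached along a sequence tending to $\alpha$, and the Box Lemma rules out such a sequence descending to $\alpha$ from above, so the coefficients realizing (or approaching) $|f|$ lie in the window $(\alpha - \epsilon, \alpha]$ and eventually have norm $\ge |f| - \delta$, hence survive in $f'$. With the hypothesis this yields $|f'g'| = M$. Now decompose $fg = f'g' + E$ with $E = f'g'' + f''g' + f''g''$.

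The first claim is that $E$ has no coefficient of norm greater than $M' := \max\{(|f|-\delta)|g|,\ |f|(|g|-\delta)\} < M$ at any index $\le \alpha + \beta$. By Lemma~\ref{lem:padic:elts} a coefficient of a product at a given index is, up to finitely many integer $p$-adic carries (which do not raise norms, as $\rho \le 1$), a sum of terms $a_i b_j$ with $i + j$ equal to that index, so by the strong triangle inequality it suffices to bound each such term. If a term $a_i b_j$ with $i + j \le \alpha + \beta$ had norm $> M'$, then $|a_i|\rho^i > |f| - \delta$ and $|b_j|\rho^j > |g| - \delta$, so the Box and Bar Lemmas (with our $\epsilon, \delta$) force $i \in (\alpha - \epsilon, \alpha] \cup [\alpha + \epsilon_a^f, \infty)$ and $j \in (\beta - \epsilon, \beta] \cup [\beta + \epsilon_a^g, \infty)$; since $i + j \le \alpha + \beta$ and $\epsilon \le \min(\epsilon_a^f, \epsilon_a^g)$, this forces $i \in (\alpha - \epsilon, \alpha]$ and $j \in (\beta - \epsilon, \beta]$, i.e. $a_i$ is a coefficient of $f'$ and $b_j$ of $g'$, so the term occurs in $f'g'$ and not in $E$. (When the carry is nonzero one gets the stronger $i + j \le \alpha + \beta - 1$, which $\epsilon < \tfrac12$ already shows is incompatible with $i, j$ lying simultaneously in those two windows.)

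Given the claim, it remains to exhibit a coefficient of $f'g'$ of norm arbitrarily close to $M$ at some index $\le \alpha + \beta$: then the strong triangle inequality (Lemma~\ref{lem:malcevt:tri}) and the claim show the corresponding coefficient of $fg = f'g' + E$ has norm close to $M$, whence $|fg| \ge M$. This is the step I expect to be the main obstacle, and it is precisely where the $p$-adic carrying must be controlled. Because $\epsilon < \tfrac12$, the supports of $f'$ and $g'$ meet each $\mathbb{Z}$-coset in at most one point, so, viewing $f'g'$ first as a $t$-adic product, its $t$-adic support lies in $(\alpha + \beta - 2\epsilon, \alpha + \beta]$ and likewise meets each $\mathbb{Z}$-coset at most once; hence after passing to $R\malcevp$ any coefficient of the reduced series $f'g'$ at an index $m > \alpha + \beta$ is a single $p$-adic carry-digit of the $t$-adic coefficient $c_{\gamma_0}$ sitting at the unique index $\gamma_0 \le \alpha + \beta$ on $m$'s coset, so its contribution to the Gauss norm is at most that of $c_{\gamma_0}$. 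Consequently $M = |f'g'|$ equals the supremum over these $c_{\gamma_0}$ (all at indices $\le \alpha + \beta$) of their contributions to the Gauss norm, and since each $c_{\gamma_0}$ is a finite sum of products $a_i b_j$ of coefficients of $f'$ and $g'$, none divisible by $p$, one concludes by showing — using the uniqueness of $p$-adic expansions (Lemma~\ref{lem:malcevp:unique}) together with the shape of Witt addition — that a substantial drop in norm from $c_{\gamma_0}$ to its leading $p$-adic digit would push the contribution of $c_{\gamma_0}$ strictly below $M$. Hence the leading digit of some $c_{\gamma_0}$, which lives at an index $\le \alpha + \beta$, already realizes $M$ up to any prescribed error, as needed.

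Finally, I note that in the purely $t$-adic case there is no carrying: the $t$-adic coefficient of $f'g'$ at $\alpha + \beta$ (or a sequence of coefficients approaching that index) realizes the norm directly, and the argument is short. The substance of the lemma is thus that the bookkeeping of $p$-adic carries (Lemmas~\ref{lem:padic:elts} and \ref{lem:malcev:prodsupp}) together with the ``one coefficient per $\mathbb{Z}$-coset'' structure forced by $\epsilon < \tfrac12$ keeps everything under control, so that the truncated product $f'g'$ faithfully witnesses $|f|\,|g|$ at indices $\le \alpha + \beta$, where by the first claim the error term $E$ cannot interfere.
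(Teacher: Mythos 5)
Your overall strategy is the same as the paper's: use the Box and Bar Lemmas (with cross-conditions between the bar and box parameters) to show that at indices at most $\argnorm f + \argnorm g$, any product $a_i b_j$ of norm exceeding $\max\{(|f|-\delta)|g|,\ |f|(|g|-\delta)\}$ must have both factors inside the truncation windows, so the non-window part of $fg$ cannot interfere with the truncated product there. Your first claim is exactly the content of the paper's proof, and your verification that $|f'|=|f|$ makes explicit a point the paper uses silently.

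Where you go beyond the paper --- your third paragraph --- is also where the one real gap sits. You correctly observe that the hypothesis only gives $|f'g'|=M$ as a supremum over all indices, and that $p$-adic carrying can place support of $f'g'$ at indices $>\alpha+\beta$, outside the range covered by your bound on $E$; the paper's proof does not address this, so isolating it is to your credit. But your resolution --- that a ``substantial drop in norm from $c_{\gamma_0}$ to its leading $p$-adic digit would push the contribution of $c_{\gamma_0}$ strictly below $M$'' --- is asserted, not proved, and is not evident in the stated generality. It requires a quantitative relation between $|c|_\rho$ and the norms $|d_n|\rho^n$ of the canonical digits of $c=\sum_n d_np^n$; Definition \ref{defn:normfamily:padic} only gives $|p|_\rho=\rho$ and $\rho$-independence of the $|d_n|$, and from that alone the norm of $c_{\gamma_0}$ can be witnessed only at a digit $n\geq 1$ (hence at an index $>\alpha+\beta$) without $|c_{\gamma_0}|_\rho\,\rho^{\gamma_0}$ falling below $M$, e.g.\ when the relevant $|a_i|,|b_j|$ are small compared to $\rho$. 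Closing this requires either verifying $|c|_\rho=\sup_n|d_n|\rho^n$ for the rings in play (true in the Witt-vector examples) together with an actual cancellation estimate, or extending your first claim to control $E$ at the carried indices above $\alpha+\beta$ as well. A smaller point: in passing from ``$f'g'$ has a near-$M$ coefficient at some $k\leq\alpha+\beta$'' to ``$fg$ does,'' note that the canonical coefficient of $f'g'+E$ at $k$ is not the sum of the canonical coefficients at $k$, since addition also carries; this is harmless because carries only move support upward, but it needs a sentence. For comparison, the paper's own proof stops at your first claim and says the conclusion follows, so your write-up is more candid about where the difficulty lies --- it just does not yet resolve it.
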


\begin{figure}[h]
\centering
\includegraphics[width=0.5\textwidth]{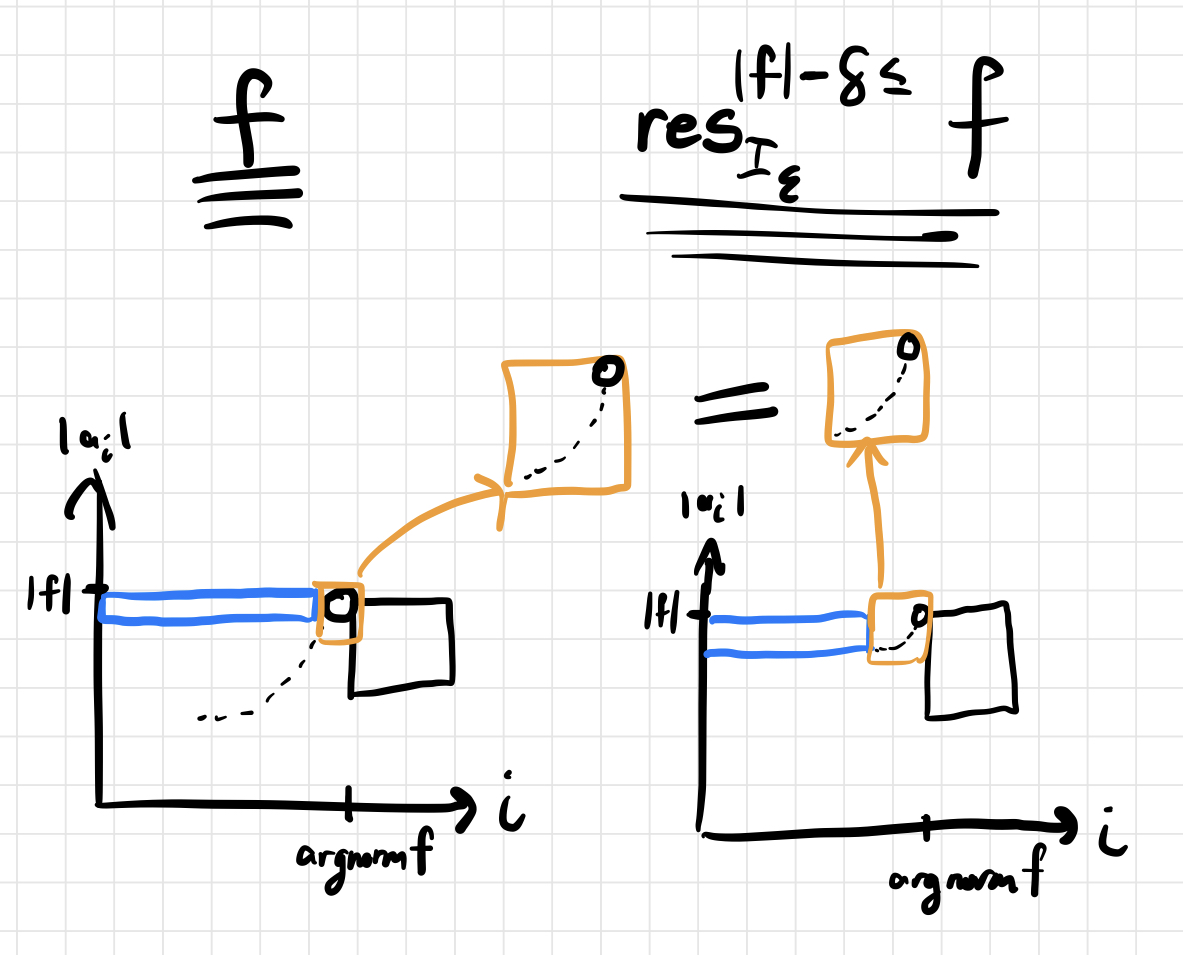}
\caption{A picture of \(\restr{I_{\epsilon}}{|f|-\delta} f\).
One should think of removing all of the coefficients of \(f\) 
except those which attain a norm which is very close to the norm
of \(f\). 
If the set of norms of terms of \(f\), (i.e. \(\{|a_{i}|\rho^{i}\}\))
does not accumulate at \(|f|\) as in
Example \ref{ex:gauss:sup}, then for sufficiently small 
\(\epsilon\) and \(\delta\),
\(\restr{I_{\epsilon}}{|f|-\delta} f\) has at most one term,
i.e. the orange box in the picture
would have only a single filled-in dot which attains the norm,
and the classical proof works.
Lemma \ref{lem:only:argnorm}
shows that to show multiplicativity, while one cannot
reduce to a single point as in the classical proof, one
can reduce to the (arbitrarily small) orange box of 
width \(\epsilon\) and height \(\delta\).}
\end{figure}

\begin{proof}
	We will show that 
	every coefficient of 
	\(\restr{I_{\epsilon^{\prime}}}{|f||g|-\delta} fg\)
	is a sum of products of coefficients
	of \(\restrint{I_{\epsilon_{1}}} f\)
	and \(\restrint{I_{\epsilon_{2}}} g\)
	for some \(\epsilon^{\prime}\),
	from which the claim follows.

	By Lemma \ref{lem:padic:elts},
	any such coefficient of \(fg\) 
	is a sum of products of coefficients
	\(a_{i}\) and \(b_{j}\) such
	that \(i + j \leq \argnorm f + \argnorm g\).

	Choose \(\ebox_{f}\) and \(\ebox_{g}\) 
	as in the Box Lemma (Lemma \ref{lem:malcevelts:box}).
	Then, choose \(\ebaar_{f} < \ebox_{g}\) 
	and \(\ebaar_{g} < \ebox_{f}\)
	as in the Bar Lemma (Lemma \ref{lem:malcevelts:bar}).
	From these lemmas, we have corresponding
	\(\dbox_{f}, \dbaar_{f}, \dbox_{g}, \dbaar_{g}\).
	Finally, choose \(\delta\) so that
	\(\delta < 
	\min \{|f| \dbox_{f}, |f| \dbaar_{f},
	|g| \dbox_{g}, |g| \dbaar_{g}\}\).
	Then, for any product of coefficients
	of \(f\) and \(g\) 
	in 
	\(\restr{I_{\epsilon^{\prime}}}{|f||g|-\delta} fg\),
	we have
	\[
		|f| - \frac{\delta}{|g|} < |a_{i}|\frac{|b_{i}|}{|g|}
		< |a_{i}|
	,\] 
	which gives by the Bar Lemma that
	\[
	\argnorm f - \epsilon_{1} < i
	\implies
	j < \argnorm g + \epsilon_{1}
	,\] 
	using that \(i + j \leq \argnorm f + \argnorm g\).
	Now, by the Box Lemma,
	\[
	j < \argnorm g + \epsilon_{1}
	\implies
	j < \argnorm g
	.\] 
	We can use a completely symmetric argument to deduce
	that \(\argnorm g - \epsilon_{2} < j\) 
	and \(i < \argnorm f\),
	and these four statements taken together 
	are what we wanted to show.
\end{proof}

\begin{rmk}
	The previous proof shows that we may also choose
	our 
	\(\epsilon\)'s less than \(\epsilon_{1}\) and
	\(\epsilon_{2}\) and the statement still applies.
\end{rmk}

\begin{cor}
	\label{cor:malcevp:mult}
	The Gauss norms on 
	\(R\malcevt\) and
	\(R\malcevp\)
	are multiplicative.
\end{cor}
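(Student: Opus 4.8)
The plan is to bootstrap from the reduction already in hand. By Corollary~\ref{cor:malcevt:submult} we have $|fg| \le |f|\,|g|$, so only the reverse inequality is at issue, and by Lemma~\ref{lem:only:argnorm} together with the Remark following it, it suffices to produce $\epsilon_1,\epsilon_2>0$, which we are free to shrink, and $\delta>0$ for which the restricted elements
\[
F := \restr{I_{\epsilon_1}}{|f|-\delta} f,
\qquad
G := \restr{I_{\epsilon_2}}{|g|-\delta} g
\]
satisfy $|FG| = |F|\,|G|$. The first thing I would record is that these restrictions lose nothing essential: the index at which the Gauss norm of $f$ is attained --- or, when it is not attained, a tail of the sequence of indices along which it is approached --- lies in $I_{\epsilon_1} = (\argnorm f - \epsilon_1, \argnorm f]$ and has coefficient-norm $\ge |f|-\delta$, so $|F| = |f|$; and since passing to a sub-support can only push the argnorm up while this surviving near-argnorm data pins it down, $\argnorm F = \argnorm f =: \alpha$. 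The same holds for $G$, with $\argnorm G =: \beta$, so in particular $|F|\,|G| = |f|\,|g|$.

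Next, invoking the Remark I would shrink further so that $\epsilon_1 + \epsilon_2 < 1$. Then $\supp F + \supp G \subseteq (\alpha+\beta-\epsilon_1-\epsilon_2,\ \alpha+\beta]$ is an interval of length $<1$, so by Lemma~\ref{lem:malcev:prodsupp} every $p$-adic carry occurring in the product $FG$ moves an index up by at least $1$ and thus out of this window. Consequently, on $(\alpha+\beta-\epsilon_1-\epsilon_2,\ \alpha+\beta]$ --- which is where $|FG|$ must be sought --- the coefficients of $FG$ agree with those of the naive $t$-adic product $\bigl(\sum_i a_i t^i\bigr)\bigl(\sum_j b_j t^j\bigr)$, and by Lemma~\ref{lem:padic:elts} each such coefficient is a \emph{finite} sum of products $a_i b_j$ with $i\in\supp F$, $j\in\supp G$ and $i+j$ equal to the index in question. (The $R\malcevt$ case of the corollary is this same argument with the carrying paragraph deleted.)

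The task is now: for every $\eta>0$, exhibit one coefficient of $FG$ of norm exceeding $|f|\,|g|-\eta$, which forces $|FG| \ge |f|\,|g|$ because the Gauss norm is a supremum. For $\eta>0$ put
\[
i(\eta) := \min\{\, i\in\supp F : |a_i|\rho^{i} \ge |f|-\eta \,\},
\qquad
j(\eta) := \min\{\, j\in\supp G : |b_j|\rho^{j} \ge |g|-\eta \,\},
\]
which exist by well-orderedness (and tend to $\alpha,\beta$ as $\eta\to 0$, by the $\argnorm$ characterization), and set $s := i(\eta)+j(\eta)$, which lies in our window. Minimality of $i(\eta)$ forces every pair $(i,j)$ with $i+j=s$ and $i<i(\eta)$ to have norm $<(|f|-\eta)|g|$; minimality of $j(\eta)$ forces every pair with $i>i(\eta)$, hence $j<j(\eta)$, to have norm $<|f|(|g|-\eta)$; and the pair $(i(\eta),j(\eta))$ itself has norm in $[(|f|-\eta)(|g|-\eta),\ |f|\,|g|]$.

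The step I expect to be the real obstacle is the next one. These estimates do \emph{not} isolate $(i(\eta),j(\eta))$ as a strictly dominant monomial: its norm is within $O((|f|+|g|)\eta)$ of $|f|\,|g|$ while the competing pairs are only within $O(\min(|f|,|g|)\eta)$, so in the worst case neither strictly beats the other, and the finite sum at index $s$ could cancel at its top norm level. This is exactly the pathology illustrated by Example~\ref{ex:gauss:sup} and that defeats the classical one-line argument; controlling it is the whole point of the corollary. Two routes seem natural. The cleaner one to state reformulates the claim: $|\cdot|_\rho$ is multiplicative on $R\malcevt$ (resp.\ $R\malcevp$) precisely when the associated graded ring $\bigoplus_{c}\{x:|x|_\rho\le c\}/\{x:|x|_\rho<c\}$ is an integral domain, so one would analyze this graded ring --- built from the associated graded of $R$, itself a domain because the valuation of $R$ is multiplicative --- and, using the Box and Bar Lemmas to get a grip on the ``supremum, not maximum'' classes, verify that they introduce no zero divisors. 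The more hands-on route is a minimality argument: if $|FG| = c < |f|\,|g|$, the collision-sums $i+j$ carrying a pair of norm $>c$ form a subset of the well-ordered set $\supp F+\supp G$ and so have a least element $s_0$; at $s_0$ the finitely many pairs of norm $>c$ (finite by Lemma~\ref{lem:padic:elts}) must cancel at their common top norm level, and one would track how such a cancelling configuration forces further high-norm pairs at other collision-sums, aiming for a contradiction with minimality. Making this last bookkeeping actually close --- certifying that cancellation cannot suppress the norm everywhere near the top --- is where I expect the genuine work to lie. Once $|FG| = |F|\,|G|$ is secured by either route, Lemma~\ref{lem:only:argnorm} yields $|fg| = |f|\,|g|$, and the identical argument over $R\malcevt$, with no $p$-adic carrying, finishes both cases.
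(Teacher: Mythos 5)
Your setup matches the paper's: reduce to the reverse inequality via Corollary~\ref{cor:malcevt:submult}, then reduce via Lemma~\ref{lem:only:argnorm} to the restricted elements $F$ and $G$ supported near the argnorms with coefficient norms $\geq |f|-\delta$, $\geq |g|-\delta$. But the decisive step is missing: you correctly diagnose that your candidate pair $(i(\eta),j(\eta))$ is not isolated from competitors and could cancel, and you then offer two unexecuted sketches (an associated-graded analysis and a minimality bookkeeping) while conceding that "the genuine work" lies in closing them. As written, the proof does not go through.

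The resolution is much simpler than either of your routes, and it is exactly where your $\eta\to 0$ move leads you astray. Do not chase indices that nearly achieve the norm; instead take $i_0 = \min \supp F$ and $j_0 = \min \supp G$, which exist by well-orderedness. The index $i_0+j_0$ is the minimum of $\supp F + \supp G$, and $(i_0,j_0)$ is its \emph{unique} decomposition: any other pair $(i,j)$ with $i+j=i_0+j_0$ would need $i<i_0$ or $j<j_0$. So the coefficient of $FG$ there is exactly $a_{i_0}b_{j_0}$ --- there are no competing terms to cancel against, and in the $p$-adic case no carry can land there since carries only move indices up and nothing lies below the minimum. The point of the restriction in Lemma~\ref{lem:only:argnorm} is precisely that \emph{every} surviving coefficient, including the minimal-index one, already has norm $\geq |f|-\delta$ (resp.\ $\geq |g|-\delta$); so this single uncancellable term has norm $|a_{i_0}|\rho^{i_0}|b_{j_0}|\rho^{j_0} \geq (|f|-\delta)(|g|-\delta)$ by multiplicativity on $R$. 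Running the whole argument as a proof by contradiction from $|fg|<|f||g|$, with $\delta$ (and the $\epsilon$'s, per the Remark) shrunk so that $|fg| < (|f|-\delta)(|g|-\delta)$, gives the contradiction. Your second route is in fact this argument in embryo: the least collision-sum carrying a high-norm pair is $i_0+j_0$ itself, and there the "finitely many pairs" you would need to cancel reduce to one, so no further tracking is needed.
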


\begin{proof}
	Let \(f, g\) be elements
	of \(R\malcevt\) or
	\(R\malcevp\).
	Assume for the sake of 
	contradiction that 
	\(|fg| < |f||g|\) strictly.
	Pick \(\epsilon_{1}\),
	\(\epsilon_{2}\) as
	in the previous lemma 
	so that 
	\(|fg| < (|f| - \delta) (|g| - \delta)\),
	where \(\delta\) is given
	by the previous lemma.
	We can assume that
	\(f = \restr{I_{\epsilon_{1}}}{|f|-\delta} f\)
	and likewise for \(g\).
	Then there exists a 
	unique smallest element \(i\) of
	\(\supp f\)
	by well-orderedness.
	Likewise, we have a smallest index
	\(j\) for \(g\) in \(I_{\epsilon_{2}}\).
	Then, 
	\(a_{i}b_{j}\) must be a coefficient
	in the product, and there
	can be no cancellation 
	by minimality.
	But then,
	\(|fg| < |a_{i}||b_{i}| = |a_{i}b_{i}|\),
	which is a contradiction 
	by multiplicativity downstairs.
\end{proof}

\section{Formalisms} \label{sec:formal}


We first define a notion which axiomatizes
the idea of a ``collection of valuations'':

\begin{defn}
	Let \(R\) be a ring, and let 
	\((C,\oplus,\otimes,\leq)\) be a
	partially ordered tropical
	semiring. 
	Then a map of sets 
	\(\LN : R \to C \) is valuative if
	\begin{enumerate}[(i)]
		\item \(\LN\) is superadditive,
			that is \(\LN(a) \oplus \LN(b) \leq \LN(a + b)\)
		\item \(\LN\) is multiplicative,
			or \(\LN(a) \otimes \LN(b) = \LN(ab)\)
	\end{enumerate}
	for all \(a, b \in R\).
\end{defn}

As a first example, taking \(\LN\) to be a valuation
\(v\) of \(R\) on a totally ordered group \(\Gamma\), 
we see that 
\(v\) is valuative
(here, tropical addition is taking the \(\min\)). 
Also, if one takes any two valuations \(v, v^{\prime}\),
the natural map \(R \to \Gamma_{v} \times \Gamma_{v^{\prime}}\) 
is valuative.
We will cheifly be concerned with the case when
our tropical semiring \(C\) is 
\(\contposreal\), and our map 
``comes from a family of Gauss norms/valuations''.

%
%
%
%


For the rest of this paper, the Legendre
Transform always refers to 
the infimum Legendre transform
of a decreasing convex function \(F\) 
on the positive real numbers,
that is
\[
	\leg(F)(t) = \inf_{x} \{F(x) + xt\}
.\] 

\begin{defn}
	\label{def:npf}
	Let \(R\) be a ring, 
	and let \(U \ins \nonneg\) be an interval.
    A \textit{Newton polygon formalism}
	with domain \(U\) on \(R\) 
    is a diagram of sets 
    \[   
    \begin{tikzcd}
    R \arrow{r}{\newt} \arrow{rd}{\LN} & 
    \contposreal \arrow{d}{\leg} \\
    & \contposreal
    \end{tikzcd}
    \]
    where \(\leg\) is the Legendre transform
	(restricted to \(\contu\))
    such that 
    \begin{itemize}
        \item \(\LN\) is superadditive and multiplicative
        \item \(\newt(f)\) is convex for every \(f \in R\).
    \end{itemize}

	\(\newt(f)\) is called the \textit{Newton polygon}
	of \(f\) under the formalism.
\end{defn}

Note that the Newton polygon of \(f\) in the
definition has no requirement to be a polygon!
We refrain from calling it the ``Newton lower convex hull''
for consistency and recognizability.

Examples of Newton polygon formalisms include
standard examples of rings which have Newton polygons,
for example take \(R = K[x], R = K\langle x\rangle,\) or \(R = K[[x]]\) 
for \(K\) a valued field.
The main theorems in this paper will follow from the
existence of ``nonstandard'' Newton polygon formalisms
on various perfectoid rings \(R\).
However, all of the rings we consider are alike in that
the Newton polygon formalism 
``comes from a continuous 
family of multiplicative Gauss norms''.
The multiplicative norms correspond to valuations,
and the continuous family of valuations becomes
the map \(\LN\).

\begin{rmk}
	Since \(\LN\) must be the Legendre transform
	of \(\newt\), the data of a Newton polygon formalism
	is only the map \(\newt\).
\end{rmk}

\begin{rmk}
	Definition \ref{def:npf} implies that \(\newt(f)\) is
	invariant under multiplication by units.
	Indeed, if any unit has a nonzero Newton polygon,
	by multiplicativity its inverse should be
	negative, which is disallowed.
	In particular, a field may only have the trivial Newton
	polygon formalism.
\end{rmk}

Below, we give a technical lemma which shows that 
Mal'cev-Neumann rings have Newton polygon 
formalisms in the presense of reasonable 
Gauss norms.

\begin{prop} \label{prop:constr:npf}

	Let \(R\) be \(V\malcevp\) for \(V\) a 
	ring with a family of \(p\)-adic valuations 
	such that \(V\) embeds into 
	\(W(V / p)\).
	Respectively, assume \(R = V\malcevt\) 
	for \(V\) some valued ring.

	Suppose that the Gauss norms 
	on \(R\) of radius \(s\) are multiplicative
	norms for
	\(s \in U\).
	Then
	the associated
	valuations
	\(v_{s}: R \to \mathbb{R}\) 
	give a Newton polygon formalism
	with 
	\(\LN(f) = s \mapsto v_{s}(f)\) 
\end{prop}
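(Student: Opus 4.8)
The plan is to verify directly that the assignment $\newt(f) := \leg^{-1}(\LN(f))$ — more precisely, the map $\newt$ whose Legendre transform recovers the family $s \mapsto v_s(f)$ — is a well-defined Newton polygon formalism, i.e.\ that (a) each $\LN(f)$ is a convex decreasing function of $s$ on $U$ so that it lies in the image of the Legendre transform and has a canonical convex preimage $\newt(f)$, (b) $\LN$ is superadditive and multiplicative, and (c) $\newt(f)$ is convex. Item (b) is essentially immediate: superadditivity of $\LN$ in the pointwise order is just the strong triangle inequality $|f+g|_s \le \max\{|f|_s,|g|_s\}$ from Lemma~\ref{lem:malcevt:tri} rewritten in terms of valuations (where $\min$ is tropical addition), and multiplicativity of $\LN$ is precisely the multiplicativity of each Gauss norm, which is the running hypothesis (supplied in general by Corollary~\ref{cor:malcevp:mult}). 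So the content is entirely in (a) and (c).

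For (a) and (c), the key observation is that $\LN(f)$ as a function of $s$ is a supremum (or rather, after taking $-\log$, an infimum) of affine functions of the variable $\log s$: indeed
\[
v_s(f) = -\log|f|_s = -\log \sup_i\{|a_i|^{\text{base}}_s\, s^i\} = \inf_i\{v_s(a_i) - i\log s\}.
\]
In the change of variable $u = -\log s$ (so $u$ ranges over $-\log U$, a subinterval of $\nonneg$), each term becomes $v_s(a_i) + i u$, which is affine in $u$ with slope $i \ge 0$ and intercept $v_s(a_i)$ — note that $v_s(a_i)$ is \emph{constant in $s$} after we normalize $a_i$ to be not divisible by $p$ (in the $p$-adic case), by the second bullet of Definition~\ref{defn:normfamily:padic}, and is literally constant in the $t$-adic case since the base valuation doesn't vary. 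Hence $u \mapsto v_s(f)$ is a pointwise infimum of a family of affine functions with nonnegative slopes, so it is concave and nondecreasing in $u$, which back in the $s$ variable says $\LN(f)$ is the required convex decreasing function; and the Legendre transform being an involution on (lower semicontinuous) convex decreasing functions, we recover a canonical convex $\newt(f)$ with $\leg(\newt(f)) = \LN(f)$, establishing both that the diagram commutes and that $\newt(f)$ is convex, which is (c). Concretely $\newt(f)$ is the convex hull of the points $(i, v_s(a_i))$ — the honest Newton polygon — but one need not spell this out; the abstract Legendre-duality argument suffices.

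The main obstacle I anticipate is purely technical rather than conceptual: ensuring everything really lands in $\contposreal$, i.e.\ that $\LN(f)$ and $\newt(f)$ are genuinely \emph{continuous} (not merely convex) and \emph{nonnegative} on the stated domain. Continuity of $\LN(f)$ on the interior of $U$ follows from Lemma~\ref{lem:malcevt:cont} (a convex function on an interval is continuous on its interior, and the cited lemma handles the family), with care at the endpoint $s=1$; nonnegativity of $\newt(f)$ comes from the support of $f$ lying in $\Gammanonneg$, so all slopes $i$ appearing are $\ge 0$, whence the Legendre transform of a nonnegative-slope convex function is itself supported on $\nonneg$ and nonnegative there. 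A secondary subtlety is the Legendre transform being a genuine bijection: one must restrict to the class of decreasing convex functions and use that $\leg \circ \leg = \mathrm{id}$ on this class, so that "$\LN$ must be the Legendre transform of $\newt$" uniquely determines $\newt$ — this is exactly the remark following Definition~\ref{def:npf}, and I would simply invoke it. Finally, the infimum in $v_s(f) = \inf_i\{v_s(a_i) + iu\}$ need not be attained (Example~\ref{ex:gauss:sup}), but this is harmless: an infimum of affine functions is concave regardless of attainment, so no appeal to the $\argnorm$ machinery of Section~2.1 is needed here.
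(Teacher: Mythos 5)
Your proposal is correct in substance but runs in the opposite direction from the paper's proof. The paper first \emph{constructs} \(\newt(f)\) explicitly as the nonincreasing lower convex hull of the points \((i, v(a_{i}))\) and then verifies \(\leg(\newt(f)) = \LN(f)\); the technical heart of that verification is showing that the infimum in \(\inf_{x}\{\newt(f)(x)+sx\}\) may be taken over \(\supp f\) rather than all of \(\nonneg\), which the paper handles by a local dichotomy (near a minimizer either \(\newt(f)\) is piecewise linear, so the minimum sits at a node of the hull, or the support accumulates there and one passes to a limit). You instead stay entirely on the Legendre side: you observe that \(s \mapsto v_{s}(f)\) is an infimum of affine functions with slopes in \(\supp f \subset \Gammanonneg\) and intercepts \(v(a_{i})\) independent of \(s\) (citing the normalization \(p \nmid a_{i}\) here is exactly right), hence concave and nondecreasing, and then obtain \(\newt(f)\) by conjugation. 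This cleanly sidesteps the paper's case analysis, at the price of invoking Fenchel--Moreau biconjugation and of producing \(\newt(f)\) only abstractly, whereas the explicit lower-convex-hull description is what the paper leans on later when it must \emph{realize} prescribed Newton polygons (Definition \ref{def:discrete:approx} and Section 5). Two minor cautions, neither a gap relative to the paper: the inverse of \(\leg\) is the sup-conjugate \(x \mapsto \sup_{s}\{\LN(f)(s) - sx\}\) rather than \(\leg\) itself; and for \(x\) below \(\min(\supp f)\) that conjugate is \(+\infty\), so \(\newt(f)\) does not literally land in \(\contposreal\) --- but the paper's explicit construction has the identical boundary issue, which you correctly flag as the main technical loose end.
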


\begin{proof}
	Let \(r = p\) or \(t\), respectively.

	We need to define a map \(\newt\),
	then we need to show that 
	for any \(f = \sum_{i}^{} a_{i}r^{i}\),
	\(\LN(f)\) has
	the above form, and further that \(\LN(f)\) 
	is a continuous valuative map.

	We define \(\newt(f)\) to 
	by its graph: let \(\Gamma(\newt(f))\)be the 
	nonincreasing lower convex hull 
	of the points \((i,v(a_{i}))\) in \(\mathbb{R}^{2}\).
	Thus \(\newt(f)\) is a continuous positive function,
	and so lies in \(\contposreal\).

	We can write \(\LN(f)\), the legendre
	transform of \(\newt(f)\), as
	\[
	s \mapsto \inf_{x \in \nonneg}\{\newt(f)(x) + sx\}.
	\] 
	On the other hand, 
	the valuation associated to the Gauss norm of
	radius \(s\) is 
	\[
		\inf_{i \in I}\{v(a_{i}) + s i\}
	,\] 
	where \(I\) is the set of nonzero indices of \(f\),
	i.e. \(I = \{i | a_{i} \neq 0\}\).
	We will show that in \(\LN(f)\), the infimum can
	be taken over \(I\).
	Then the two expressions match.

	We see that \(\newt(f)\) has the following property.
	For every \(x \in \nonneg\), there exists a neighborhood
	\(U_{x}\) of \(x\) with either
	\begin{enumerate}[(1)]
		\item \(\newt(f)\) is piecewise-linear on \(U_{x}\),
			or
		\item The set \(S^{\prime} \colonequals
			\{i | (i,v(a_{i})) \in \Gamma(\newt(f))\}\)
			accumulates to \(x\).
	\end{enumerate}
	Suppose \(x\) is the point where the infimum
	is achieved in \(\LN(f)(s)\) (this point must be
	finite, as \(\newt(f)\) is nonincreasing
	and positive and so will be dominated by the linear
	term eventually).
	Then, in case (1), such a minimum can only
	be attained at a node of a piecewise-linear function,
	so we conclude that \(x\) is a node, and that
	\(x \in I\).
	In case (2), we choose an accumulating set
	\(x_{j} \to x\) with \(x_{j} \in I\),
	and by continuity the limit of this sequence must
	be the infimum. 
	Thus, we may take the inf over \(x \in I\), as desired.

	Finally, the map
	\(\LN\) is valuative because
	\(v_{s}\) are multiplicative valuations
	(as the corresponding Gauss norms are multiplicative),
	and form a continuous family.
\end{proof}

\begin{ex}
	Let \(R = \mathbb{A}_{\inf}\).
	Let \(r = p\),
	and let \(S\) be the image
	of the multiplicative lift of \(\mathcal{O}_{K}^{\flat} \).
	Then the
	Gauss norms are multiplicative
	in this situation \cite[Chapter 3]{FFCourbes}.
	By the same argument as Proposition \ref{prop:constr:npf},
	there is a 
	Newton polygon formalism on \(\mathbb{A}_{\inf}\).
	These are 
	the Newton polygons on \(\mathbb{A}_{\inf}\) 
	used in \cite{DuUncountable}
	and \cite{FFCourbes}.
\end{ex}

We also record a few basic lemmas:

\begin{lem} \label{lem:npf:injection}
	Let \(R \inj R^{\prime}\) be an inclusion,
	and let \(\newt\) be a Newton polygon formalism
	on \(R^{\prime}\).
	Then \(\newt\) is a Newton polygon formalism on \(R\).
\end{lem}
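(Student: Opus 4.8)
The plan is to restrict the given data along the inclusion $R \inj R'$. A Newton polygon formalism on $R'$ consists of the map $\newt : R' \to \contposreal$ (recall that, as noted in the remark following Definition \ref{def:npf}, the map $\LN$ is determined as the Legendre transform of $\newt$, so this is the only data to carry over). I would simply set $\newt|_R$ to be the composite $R \inj R' \xrightarrow{\newt} \contposreal$, and correspondingly $\LN|_R = \leg \circ \newt|_R$, which agrees with the restriction of $\LN$ from $R'$ since Legendre transform is applied pointwise to the output. The domain $U$ is unchanged.

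It then remains to verify the two axioms of Definition \ref{def:npf} for the restricted maps. For the first, $\LN|_R$ is superadditive and multiplicative because for $a,b \in R$ the identities $\LN(a) \oplus \LN(b) \leq \LN(a+b)$ and $\LN(a)\otimes\LN(b) = \LN(ab)$ already hold in $R'$, and the sum $a+b$ and product $ab$ computed in $R$ coincide with those computed in $R'$ (this is exactly what it means for $R \inj R'$ to be a ring homomorphism). For the second, convexity of $\newt(f)$ for $f \in R$ is immediate: every such $f$ lies in $R'$, where $\newt(f)$ is already convex by hypothesis, and convexity is a property of the function $\newt(f) \in \contposreal$ itself, so it is unaffected by which ring we regard $f$ as belonging to.

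There is essentially no obstacle here; the lemma is a bookkeeping statement asserting that the structure ``Newton polygon formalism'' is preserved under passing to a subring, and the only thing to check is that the defining properties are expressed purely in terms of the ring operations and the output functions, both of which are respected by an injection of rings. The one point worth stating carefully is that nothing is required of the inclusion beyond being a ring map (in particular $R'$ need not be faithfully flat or anything of the sort), since all the axioms are universally quantified identities and inequalities that descend automatically to any subset closed under the operations.
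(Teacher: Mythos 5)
Your proof is correct and matches the paper's approach: the paper simply states ``straightforward verification from definitions,'' and your write-up is exactly that verification spelled out. Nothing to add.
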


\begin{proof}
	Straightforward verification from definitions.
\end{proof}

\begin{lem} \label{lem:npf:completion}
	Let \(K\) be a nonarchimedian field, and let 
	\(r\) be a pseudouniformizer.
	Let \(R\) be 
	\(\mathcal{O}_{K}[x],\)
	\(\mathcal{O}_{K}[[x]],\)
	\(\mathcal{O}_{K}\coperfx,\)
	\(\mathcal{O}_{K}\coperfxpow \) or
	\(\mathcal{O}_{K}\puiseuxx\).

	Assume \(\newt\) is a Newton polygon formalism
	on \(R\)
	coming from multiplicative
	\(x\)-adic Gauss norms
	(Proposition \ref{prop:constr:npf}),
	then this can be
	extended to a Newton polygon formalism on 
	\(\widehat{R}\) for the \(r\)-adic topology
	on \(R\).
\end{lem}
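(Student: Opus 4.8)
The plan is to pass to the $r$-adic completion and check that the extra Gauss norms still make sense and remain multiplicative, so that Proposition~\ref{prop:constr:npf} applies verbatim to $\widehat{R}$. First I would recall that by construction the Newton polygon formalism on $R$ arises from a family of multiplicative $x$-adic Gauss norms of radius $s$ for $s$ in some interval $U$; explicitly, for $f = \sum_i a_i x^i \in R$ one has $v_s(f) = \inf_i\{v(a_i) + si\}$, where $v$ is the valuation on $\mathcal{O}_K$. The point is that each $v_s$ is continuous for the $r$-adic topology: since $r$ is a pseudo-uniformizer of $\mathcal{O}_K$, multiplying $f$ by $r$ increases every $v(a_i)$ by the fixed positive quantity $v(r)$, hence $v_s(rf) = v_s(f) + v(r)$, and so $v_s(r^n g) \to \infty$ as $n \to \infty$ uniformly in $g$ ranging over a bounded set. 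Therefore each $v_s$ extends uniquely to a valuation on $\widehat{R}$ (equivalently, the Gauss norm of radius $s$ extends to a norm on $\widehat{R}$), and the extension is still multiplicative, since multiplicativity is a closed condition and $R$ is dense in $\widehat{R}$.

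Next I would verify that the extended family $\{v_s\}_{s \in U}$ on $\widehat{R}$ still fits the hypotheses of Proposition~\ref{prop:constr:npf}. The only subtlety is that Proposition~\ref{prop:constr:npf} is stated for rings of the form $V\malcevt$ or $V\malcevp$, whereas here $R$ is one of $\mathcal{O}_K[x]$, $\mathcal{O}_K[[x]]$, $\mathcal{O}_K\coperfx$, $\mathcal{O}_K\coperfxpow$, or $\mathcal{O}_K\puiseuxx$. But in each of these cases the $r$-adic completion $\widehat{R}$ embeds into a suitable Mal'cev--Neumann ring $\mathcal{O}_K\malcevt$ (with value group chosen to accommodate the fractional or $p$-power exponents in the variable $x$): an element of $\widehat{R}$ is a limit of polynomials in $x$ with coefficients in $\mathcal{O}_K$, and such limits are exactly the well-ordered-support formal sums $\sum_i a_i x^i$ with $v(a_i) + si \to \infty$, which sit inside $\mathcal{O}_K\malcevt$. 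Under this embedding the extended Gauss norms are the restrictions of the Gauss norms on $\mathcal{O}_K\malcevt$, which are multiplicative by Corollary~\ref{cor:malcevp:mult}. Hence by Lemma~\ref{lem:npf:injection} together with Proposition~\ref{prop:constr:npf} applied to $\mathcal{O}_K\malcevt$, the formalism restricts to a Newton polygon formalism on $\widehat{R}$.

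Finally I would check that the restricted formalism on $\widehat{R}$ genuinely extends the one on $R$, i.e.\ that $\newt$ and $\LN$ agree on $R$ with what was given. This is immediate: the Newton polygon of $f \in R$ was defined as the nonincreasing lower convex hull of the points $(i, v(a_i))$, and the same recipe applied inside $\mathcal{O}_K\malcevt$ gives the same hull since $f$ has the same coefficients; likewise $\LN(f) = (s \mapsto v_s(f))$ is computed by the same infimum formula. The main obstacle, and the only place where real care is needed, is the density/multiplicativity step: one must confirm that the limit of the $v_s$ along Cauchy sequences in $R$ is well-defined (this uses $v_s(r^n g) \to \infty$, which in turn uses that $r$ has positive valuation) and that multiplicativity survives passage to the limit --- but since $\widehat{R}$ embeds in $\mathcal{O}_K\malcevt$ where multiplicativity is already known (Corollary~\ref{cor:malcevp:mult}), this reduces to the purely formal observation that a dense subring of a ring with a multiplicative norm inherits nothing new, and that the completion of a ring with a multiplicative norm, realized inside such a Mal'cev--Neumann ring, again carries the ambient multiplicative norm.
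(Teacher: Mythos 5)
Your first paragraph is sound and is essentially the mechanism the argument needs: the Gauss norm of radius \(s\) is bounded by \(1\) on \(R\), satisfies \(|r^{n}g|_{s}=|r|_{s}^{n}|g|_{s}\) with \(|r|_{s}<1\), hence is continuous for the \(r\)-adic topology, extends to \(\widehat{R}\), and remains multiplicative by density. The gap is in your second paragraph. The claimed embedding of \(\widehat{R}\) into a Mal'cev--Neumann ring \(\mathcal{O}_{K}\malcevt\) in the variable \(x\) is false precisely in the cases that matter: \(r\)-adic limits of polynomials need \emph{not} have well-ordered support in \(x\). For instance
\[
f=\sum_{n\geq 1} r^{n}x^{1/p^{n}}
\]
is a Cauchy sum in the \(r\)-adic topology on \(\mathcal{O}_{K}\coperfx\), so it lies in \(\widehat{R}\), but its support \(\{1/p^{n}\}_{n\geq 1}\) is a strictly decreasing sequence with no least element, hence not well-ordered; \(\sum_{n}r^{n}x^{1/n}\) breaks the Puiseux case the same way. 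Your assertion that such limits ``are exactly the well-ordered-support formal sums with \(v(a_{i})+si\to\infty\)'' conflates \(r\)-adic decay of the coefficients with a condition on the exponents, which are independent. Consequently neither Corollary \ref{cor:malcevp:mult} nor Proposition \ref{prop:constr:npf} can be reached through that embedding.

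This matters because the embedding was supposed to deliver the actual Newton polygon formalism --- the convex function \(\newt(f)\) together with the identity \(\LN(f)=\leg(\newt(f))\) --- and not merely the multiplicative norms, which you already have from the density argument. What is missing is a verification that, for \(f\in\widehat{R}\) with arbitrary (countable, possibly non-well-ordered) support, the infimum \(\inf_{i}\{v(a_{i})+si\}\) over the support coincides with the Legendre transform of the lower convex hull of the points \((i,v(a_{i}))\); the corresponding step in the proof of Proposition \ref{prop:constr:npf} uses the structure of the support and cannot simply be cited here. The paper instead maps \(\widehat{R}\) into the \((r,x)\)-adic completion, observes that the \(x\)-adic Gauss norms give a Newton polygon formalism there restricting to the original one on \(R\), and then pulls back along Lemma \ref{lem:npf:injection}. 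Either follow that route or carry out the hull-versus-infimum comparison directly on \(\widehat{R}\); as written, the second half of your proof rests on a false inclusion.
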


\begin{proof}


	In these cases, we actually have a
	ring map 
	\(\widehat{R} \inj \widehat{R}^{(r,x)}\)
	from the \(r\)-adic completion to the 
	\((r,x)\)-adic completion.
	We may see this by unwinding the
	limit definition of \((r,x)\)-adic completion. 
	In the \((r,x)\)-adic completion, 
	the \(x\)-adic Gauss norms are 
	a continuous family of (continuous) valuations,
	giving a Newton polygon formalism
	which restricts to the original one on \(R\).
	Now, by Lemma \ref{lem:npf:injection}
	we may deduce that they also give a Newton
	Polygon Formalism on \(\widehat{R}\) which
	restricts to the original Newton
	polygon formalism on \(R\).

\end{proof}

\section{Du's Argument} \label{sec:du:argument}

In this section, we will prove Theorem \ref{thm:du:uncountable},
using the properties of big \(O\) notation and its variants from
computer science. 
For completeness, we define it here.

\begin{defn}
	Let \(F\) and \(G\) be functions \(U \to \nonneg\) for
	some subset \(U \ins \nonneg\).
	We consider \(\lim \frac{F}{G}\), where all limits
	are taken as \(t \to a \in [0, \infty]\) such
	that \(U\) accumulates to \(a\).
	We say that
	\begin{itemize}
		\item \(F \in O(G)\) if \(\lim \frac{F}{G} < \infty\)
		\item \(F \in O^{\sup}(G)\) if \(\lim \sup \frac{F}{G} < \infty\)
		\item \(F \in \omega(G)\) if \(\lim \frac{F}{G} = \infty\)
		\item \(F \in \omega^{\sup}(G)\) if \(\lim\sup \frac{F}{G} = \infty\)
		\item \(F \in o(G)\) if \(\lim \frac{F}{G} = 0\)
		\item \(F \sim G\) if \(\lim \frac{F}{G} = 1\)
	\end{itemize}
	If there is ambiguity, we may add the ``approaches''
	into the notation, i.e. \(\omega_{t \to 0}(G)\).
\end{defn}

We remind the reader that \(O(G)\) and \(O^{\sup}(G)\) 
(resp. \(\omega(G)\) and \(\omega^{\sup}(G)\)) are not the same,
for example take a function which oscillates between
\(y = 1\) and \(y = x\) as \(x \to \infty\).
For clarity, we always be precise with which variant we are using.

\begin{lem}
	\(O(G)\) and \(O^{\sup}(G)\) are closed under addition.
\end{lem}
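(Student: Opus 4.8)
The statement is the routine closure fact that if $F_1 \in O(G)$ and $F_2 \in O(G)$ then $F_1 + F_2 \in O(G)$, and similarly for $O^{\sup}$; since $G$ is a single fixed function $U \to \nonneg$, the plan is to work directly from the definitions via the ordinary limit laws for real-valued functions.

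First I would handle the $O$ case. By definition $\lim \frac{F_1}{G} = L_1 < \infty$ and $\lim \frac{F_2}{G} = L_2 < \infty$, where the limits are taken as $t \to a$ along the accumulation points of $U$. Then $\frac{F_1 + F_2}{G} = \frac{F_1}{G} + \frac{F_2}{G}$ pointwise on $U$ (here $G$ must be nonzero near $a$ for the quotients to make sense, which is implicit in writing $F_i \in O(G)$), and by the limit law for sums, $\lim \frac{F_1 + F_2}{G} = L_1 + L_2 < \infty$. Hence $F_1 + F_2 \in O(G)$.

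For the $O^{\sup}$ case the sum rule for $\limsup$ is only a subadditive inequality rather than an equality, so I would instead argue: $\limsup \frac{F_1 + F_2}{G} \leq \limsup \frac{F_1}{G} + \limsup \frac{F_2}{G} < \infty$, using $\limsup(u + v) \leq \limsup u + \limsup v$ for any two functions $u, v$ that are eventually defined near $a$, together with the fact that both $F_i/G$ are nonnegative so their $\limsup$'s are genuine (possibly the inequality $\leq \infty$ but here finite). This gives $F_1 + F_2 \in O^{\sup}(G)$. An easy induction then extends closure under addition from two summands to any finite number, if that is wanted, though the two-term statement is all that is asserted.

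The only point requiring a moment's care — the "main obstacle," such as it is — is making sure the ambient hypotheses under which $F_i/G$ is a well-defined real-valued function near $a$ are in force (i.e. $G > 0$ eventually along $U$ near $a$), so that the limit and $\limsup$ laws apply; this is part of the standing setup for the little-$o$/big-$O$ notation in this section rather than something to be verified anew. No other subtlety arises: everything reduces to the elementary behavior of $\lim$ and $\limsup$ under pointwise addition of nonnegative functions.
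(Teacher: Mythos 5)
Your proof is correct and is exactly the "unwind definitions" argument the paper itself gives (the paper's entire proof is that phrase); your only addition is the correct observation that for $O^{\sup}$ one uses subadditivity of $\limsup$ rather than additivity of $\lim$. Nothing further is needed.
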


\begin{proof}
	Unwind definitions.
\end{proof}

\begin{defn}
	
	Let \(F\) and \(G\) as above.
	If there exists \(H \in o(G)\) such that
	\(|F - G| < H\) for all \(x\) sufficiently
	close to \(a\), then we say that
	\(F\) and \(G\) are 
	\textit{asymptotically arbitrarily close}.
\end{defn}

\begin{lem}
	\label{lem:aac:asymeq}
	If \(F\) and \(G\) are asmptotically
	arbitrarily close,
	then \(F \sim G\).
\end{lem}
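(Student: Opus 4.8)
The plan is to reduce everything to the squeeze theorem. By hypothesis there is a function $H \in o(G)$ with $|F - G| < H$ for all $x$ sufficiently close to $a$. First I would divide this inequality by $G$ — which is legitimate near $a$, since wherever $G$ vanishes the ratios appearing in the statement are undefined and there is nothing to prove — to obtain
\[
\left| \frac{F}{G} - 1 \right| = \frac{|F - G|}{G} < \frac{H}{G}
\]
for all $x$ near $a$. Then I would invoke the hypothesis $H \in o(G)$, which by definition means precisely $\lim \frac{H}{G} = 0$ as $t \to a$, and conclude by the squeeze theorem that $\lim \left| \frac{F}{G} - 1 \right| = 0$, hence $\lim \frac{F}{G} = 1$. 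This is exactly the definition of $F \sim G$.

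There is essentially no obstacle here: the lemma is a direct unwinding of the definitions of $o(G)$, of \emph{asymptotically arbitrarily close}, and of $\sim$, glued together by the elementary squeeze argument. The only bookkeeping point worth stating is that all the limits are taken in the single sense fixed earlier, namely as $t \to a \in [0,\infty]$ along the points of $U$ accumulating to $a$; thus the same limiting process governs both the hypothesis $H \in o(G)$ and the conclusion $F \sim G$, so the squeeze applies verbatim without any change of variable or change of base point.
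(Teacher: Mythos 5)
Your proof is correct and matches the paper's approach, which simply says ``unwind definitions''; the squeeze argument you spell out is exactly that unwinding. No issues.
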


\begin{proof}
	Unwind definitions.
\end{proof}

%
%

In order to state Du's Theorem, we will
now switch gears, back to Newton polygon formalisms.
In much of Newton polygon theory, theorems are deduced
by analyzing individual Newton polygons.
Our approach will be slightly different,
we aim to deduce facts about the ring \(R\) from the
\textit{possible} Newton polygons from a given formalism.

\begin{defn}
	The \textit{discrete approximation} of a function 
	\(G \in \contposreal\) is the piecewise-linear
	function with nodes
	\((n,G(n))\) for \(n \in \mathbb{N}\).
\end{defn}

\begin{defn}
	\label{def:discrete:approx}
	A Newton polygon formalism \(\newt\) 
	\textit{discretely approximates}
	a function \(H \in \contposreal\) 
	if there exists an element \(r \in R\) 
	whose Newton polygon \(\newt(r)\) 
	has
	\begin{itemize}
		\item \(\newt(r)\) and \(G\) 
			are asymptotically arbitrarily close in the sense
			of Lemma \ref{lem:aac:asymeq},
			that is there exists some \(H \in o(G)\) for which
			\(|\newt(r) - G| < H\).
		\item The slopes of secant lines of \(\newt(r)\) and \(G\) 
			are asymptotically arbitrarily close, that is 
			given \(x \in \nonneg\), for any
			\(x_{1}, x_{2} \in (x-\delta, x+\delta)\) for some
			small \(\delta\), we have that 
			\(|m_{G} - m_{\newt(r)}| < h^{\prime} \in o(G)\) 
			where \(m_{G}\) is the slope of the secant
			line connecting \((x_{1}, G(x_{1}))\) and
			\((x_{2}, G(x_{2}))\), and likewise for \(m_{\newt(r)}\).
	\end{itemize}
	where \(G\) is the discrete approximation
	of the function \(H\).
	
\end{defn}

\begin{rmk}
	Let \(G\) be a function whose discrete approximation
	is nonconvex.
	Then it is impossible for the standard Newton 
	polygon formalism (say, on \(V[[x]]\) for 
	a valued ring \(V\))
	to discretely approximate \(G\).
\end{rmk}

Next, we have enough to state Du's main theorem (\cite{DuUncountable}),
in our conceptualized form.

\begin{thm}[after Du] \label{thm:du:uncountable}
	Let \(R\) be a ring and \(\newt\) a Newton 
	polygon formalism with domain \((0,1]\) on \(R\).
	If \(\newt\) discretely approximates
	any (convex) function \(G \in \contposreal\) which has
	\(\lim_{i \to \infty} g(i) = 0\),
	then \(R\) 
	has uncountable Krull dimension.
\end{thm}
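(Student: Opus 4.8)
The goal is to produce an uncountable chain of distinct primes in $R$. Following Du, the strategy is to index such a chain by a suitable uncountable totally ordered set $S$ — e.g. a set of strictly decreasing sequences, or equivalently by real parameters encoding "rates of decay to $0$" — and to each $s \in S$ associate a prime ideal $\mathfrak{p}_s$ in such a way that $s < s'$ forces $\mathfrak{p}_s \subsetneq \mathfrak{p}_{s'}$. The key device is the map $\LN$: for a convex $G \in \contposreal$, the set of elements $f \in R$ whose valuative data $\LN(f)$ dominates (in an appropriate asymptotic sense) the Legendre transform $\leg(G)$ should cut out an ideal, because $\LN$ is superadditive (so the condition is closed under addition) and multiplicative (so it behaves well under multiplication by arbitrary ring elements, hence is an ideal condition). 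Primality will come from multiplicativity of $\LN$: if $fg$ satisfies the growth bound but neither $f$ nor $g$ does, the valuation identity $\LN(fg) = \LN(f)\otimes\LN(g)$ gives a contradiction once the bounds are set up with strict inequalities and the right quantifier order.

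\textbf{Key steps, in order.} First I would fix, for each real parameter $t \in (0,1)$ say, a convex function $G_t \in \contposreal$ with $\lim_{i\to\infty} g_t(i) = 0$, chosen so that $t < t'$ implies $G_{t'}$ decays strictly faster than $G_t$ in a quantitative way (e.g. $G_t - G_{t'} \in \omega(\cdot)$ relative to the relevant comparison function) — this is where a concrete family, like scaled versions of a single profile, is convenient. Second, transport the comparison from the $\newt$-side to the $\LN$-side: since $\leg$ is the (inf-)Legendre transform and is order-reversing, the decay hierarchy of the $G_t$ becomes a growth hierarchy of the $\leg(G_t)$, and I would record exactly what "asymptotically arbitrarily close" from Definition \ref{def:discrete:approx} buys us after applying $\leg$ — namely that an element $r$ discretely approximating $G_t$ has $\LN(r)$ controlled tightly enough (using Lemma \ref{lem:aac:asymeq}, $F \sim G$) to compare against $\leg(G_{t'})$. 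Third, define
\[
\mathfrak{p}_t = \{\, f \in R : \LN(f) \in \omega^{\sup}(\leg(G_t)) \,\} \cup \{0\}
\]
(the precise $O$/$\omega$ variant to be pinned down so the proof goes through), and verify: (a) it is an ideal — additivity from superadditivity of $\LN$ plus closure of the $\omega^{\sup}$-class under the relevant operations, absorption from multiplicativity; (b) it is prime — from multiplicativity of $\LN$ and the fact that $\omega^{\sup}(\cdot)$ detects a genuine product-of-growth phenomenon; (c) the chain is strictly increasing — for $t < t'$, the hypothesis that $\newt$ discretely approximates $G_{t}$ (and $G_{t'}$) produces an element $r$ lying in $\mathfrak{p}_{t'} \setminus \mathfrak{p}_t$, witnessing proper containment. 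Finally, assemble an uncountable chain by running this over an uncountable linearly ordered index set.

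\textbf{Main obstacle.} The delicate point is calibrating the asymptotic conditions so that all three of (a)–(c) hold simultaneously: the ideal/prime conditions want the defining class to be robust under addition and multiplication (favoring $O^{\sup}$-type closure), while strict containment wants it to be fine enough to separate $G_t$ from $G_{t'}$ (favoring the $\omega$-type, limit-not-limsup variants). Reconciling these is exactly why the excerpt is careful to distinguish $O$ from $O^{\sup}$ and $\omega$ from $\omega^{\sup}$, and why the two bullets of Definition \ref{def:discrete:approx} control not just the Newton polygon but also the secant slopes — the slope control is what survives the Legendre transform as pointwise control of $\LN(r)$. I expect the heart of the argument to be a lemma: \emph{if $\newt$ discretely approximates $G$, then there is $r \in R$ with $\LN(r) \sim \leg(G)$ (or with an explicit $o(\leg(G))$ error)}, after which the ideal bookkeeping is routine unwinding of the $O$/$\omega$ definitions. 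Constructing the separating element for strict containment — i.e. choosing an element whose Newton polygon sits strictly between the profiles of $G_t$ and $G_{t'}$, which the "discretely approximates any such $G$" hypothesis is tailor-made to provide — is the step I would write out most carefully.
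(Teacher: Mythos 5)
Your identification of the analytic heart of the argument is accurate and matches the paper: Proposition \ref{prop:leg:asymp} is precisely the lemma you predict (discrete approximation of \(G\) plus the secant-slope control yields \(r\) with \(\LN(r)\sim\leg(G)\)), and the separating elements are obtained by applying this to \(G_{\mu}=\leg^{-1}(t^{\mu})=c_{\mu}x^{-r_{\mu}}\), giving \(g_{\mu}\) with \(\LN(g_{\mu})\in\omega(t^{\mu})\) but \(\LN(g_{\mu})\notin O^{\sup}(t^{\lambda})\) for \(\mu<\lambda\). The gap is in your step (b). You propose to define \(\mathfrak{p}_{t}\) directly by an asymptotic growth condition on \(\LN(f)\) and to deduce primality from multiplicativity of \(\LN\). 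No calibration of the \(O/\omega\) variants makes this work. Multiplicativity says \(\LN(fg)=\LN(f)+\LN(g)\) pointwise, and a sum of two nonnegative functions can lie in \(\omega(t^{\lambda})\) while neither summand does (let \(\LN(f)/t^{\lambda}\) and \(\LN(g)/t^{\lambda}\) oscillate in complementary fashion as \(t\to 0\), each dipping to \(1\) infinitely often while the other is large); so the multiplicativity axiom alone cannot show that \(fg\in\mathfrak{p}_{t}\) forces \(f\in\mathfrak{p}_{t}\) or \(g\in\mathfrak{p}_{t}\). Meanwhile the \(\omega^{\sup}\) class you actually wrote down is not closed under pointwise minimum, so superadditivity does not even give closure under ring addition for that choice. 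The tension you flag at the end is real, and it is not resolvable by choosing the right asymptotic class.

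The paper (following Du) resolves it with a tool missing from your outline: the sets cut out by the growth conditions are taken only as ideals \(I_{\lambda}\), not primes, and the Kang--Park theorem is invoked to convert the chain \(\{I_{\lambda}\}\) (together with the separation furnished by the \(g_{\mu}\)) into a chain of genuine primes, concretely \(\mathfrak{p}_{\lambda}=\sqrt{I_{\lambda}W}\cap R\) for a suitable valuation overring \(W\supseteq R\). This step cannot be omitted: an uncountable nested chain of mere ideals does not bound Krull dimension from below (a rank-one valuation ring with value group \(\mathbb{R}\) has uncountably many nested ideals and dimension one). So your skeleton needs either the Kang--Park mechanism or some substitute argument for primality; as written, step (b) would fail.
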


\subsection{Proof of Du's Theorem}

\begin{prop} \label{prop:leg:asymp}
	Let \(F\) and \(G\) be asymptotically arbitrarily close,
	in the sense of the previous section.
	Suppose further that the 
	slopes of secant lines of \(F\) and \(G\) are 
	asymptotically arbitrarily close as in 
	Definition \ref{def:discrete:approx}.
	Then we have 
	\(\leg(F) \sim \leg(G)\) as \(t \to 0\),
	where \(\leg\) is the Legendre transform.
\end{prop}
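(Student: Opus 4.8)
The plan is to show that $\leg(F)$ and $\leg(G)$ are themselves asymptotically arbitrarily close as $t \to 0$, whence $\leg(F) \sim \leg(G)$ by Lemma \ref{lem:aac:asymeq}. Since $F$ and $G$ are decreasing, convex and nonnegative, for each fixed $t > 0$ the function $x \mapsto F(x) + xt$ is continuous and tends to $\infty$, so the infimum defining $\leg(F)(t)$ is attained at some $x_F^\ast(t)$, and similarly $x_G^\ast(t)$ for $G$ (if one prefers, work with near-minimizers throughout, which changes nothing). Feeding the $G$-minimizer into the $F$-infimum and using $|F - G| < H$ gives
\[
\leg(F)(t) \le F\bigl(x_G^\ast(t)\bigr) + x_G^\ast(t)\,t \le \leg(G)(t) + H\bigl(x_G^\ast(t)\bigr),
\]
and symmetrically $\leg(G)(t) \le \leg(F)(t) + H\bigl(x_F^\ast(t)\bigr)$, so
\[
\bigl|\leg(F)(t) - \leg(G)(t)\bigr| \le \max\bigl\{H(x_F^\ast(t)),\, H(x_G^\ast(t))\bigr\} =: H^\ast(t).
\]

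It then remains to prove $H^\ast(t) = o\bigl(\leg(G)(t)\bigr)$ as $t \to 0$. Since $G$ is decreasing and bounded, it has a finite limit $L := \lim_{x\to\infty} G(x) \ge 0$; because $H/G \to 0$ and $G$ is bounded, $H \to 0$, hence $|F - G| \to 0$ and $\lim F = L$ too, so $\inf F = \inf G = L$ and both $\leg(F)(t), \leg(G)(t)$ decrease to $L$ as $t \downarrow 0$. If $L > 0$, the two Legendre transforms converge to the same positive constant and $\leg(F) \sim \leg(G)$ follows at once. If $L = 0$, I first claim $x_G^\ast(t) \to \infty$ as $t \to 0$: otherwise $x_G^\ast(t_n) \le M$ along some sequence $t_n \to 0$, forcing $\leg(G)(t_n) \ge G(M) > 0$ (as $G$ is decreasing and strictly positive), contradicting $\leg(G)(t_n) \to 0$; the same applies to $x_F^\ast$. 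Since $\leg(G)(t) \ge G(x_G^\ast(t))$ and $H/G \to 0$ at $\infty$, composing limits gives $H(x_G^\ast(t))/\leg(G)(t) \le H(x_G^\ast(t))/G(x_G^\ast(t)) \to 0$; likewise $H(x_F^\ast(t)) = o(\leg(F)(t))$, and the first displayed inequality gives $\leg(F)(t) \le \leg(G)(t)(1 + o(1))$, so $H(x_F^\ast(t)) = o(\leg(G)(t))$ as well. Hence $H^\ast = o(\leg(G))$, as needed.

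The main obstacle is the interchange of limits hidden in the last step: the error $H(x^\ast(t))$ is evaluated at the minimizer, which recedes to infinity exactly as $t \to 0$, so one must control \emph{where} the infimum is (approximately) attained. This is the point of the hypothesis that the secant slopes of $F$ and $G$ are asymptotically arbitrarily close: it forces the $F$- and $G$-minimizers to track one another and to drift out to infinity together, so that a single escape statement governs both error terms, and it rules out degenerate ``flat then kink'' shapes that could otherwise pin a minimizer at a bounded location. One must also remember that $F$ and $G$ are only convex, not differentiable, so ``the minimizer'' should be read through one-sided slopes and subdifferentials, and the case $L > 0$ genuinely needs the separate treatment above, since there the minimizers need not escape to infinity.
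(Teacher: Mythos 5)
Your proof is correct, but it takes a genuinely different route from the paper's. The paper argues via the slope interpretation of the Legendre transform: the secant-slope hypothesis is upgraded to a statement about tangent (supporting) lines, and closeness of supporting lines of \(F\) and \(G\) is what is claimed to transfer to closeness of \(\leg(F)\) and \(\leg(G)\). You instead work directly with the infimum: plugging each function's minimizer into the other's infimum gives \(|\leg(F)(t)-\leg(G)(t)|\le\max\{H(x_F^\ast(t)),H(x_G^\ast(t))\}\), and you control the error by showing the minimizers escape to infinity as \(t\to 0\) (from \(\leg(G)(t)\downarrow\inf G\) and monotonicity of \(G\)) together with \(\leg(G)(t)\ge G(x_G^\ast(t))\), so that \(H(x_G^\ast)/\leg(G)\le H(x_G^\ast)/G(x_G^\ast)\to 0\). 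This is more elementary and fully self-contained, and -- worth noting -- it never actually uses the secant-slope hypothesis: your closing paragraph attributes the control of the minimizers to that hypothesis, but your argument derives it from \(\inf G=0\) and positivity of \(G\) (the latter being implicit in \(H\in o(G)\) making sense), so your proof establishes the conclusion from the first hypothesis alone; the secant condition is what the paper's tangent-line argument consumes. The only facts you use silently -- continuity and monotone decrease of \(F\) and \(G\), and attainment of the infimum for \(t>0\) -- are part of the paper's standing conventions for the Legendre transform, so there is no gap.
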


\begin{rmk}
	If the functions \(F\) and \(G\) are smooth, one
	may replace the ``secant lines''
	condition with the derivatives being asymptotically
	arbitrarily close.
	However, the case of interest is piecewise-linear
	continuous functions, and we need control
	on all possible tangent lines, so we use the
	more technical condition in the proposition.
\end{rmk}

\begin{proof}
	Every tangent line can be written as a limit of
	secant lines, so the condition on secant lines
	implies that the slopes of 
	lines tangent to \(f\) and \(g\) 
	are arbitrarily close.
	This guarantees (based on the 
	slope interpretation of the Legendre transform)
	that the \(\leg(F)\) is asymptotically arbitrarily close
	to \(\leg(G)\) as \(t \to 0\).
	We are then done by Lemma \ref{lem:aac:asymeq}.
\end{proof}

\begin{proof}
	[Proof of Theorem \ref{thm:du:uncountable}]
	Let \(\newt\) be the 
	Newton polygon formalism on \(R\).

	We may use the same proof as Du, see
	Section 4 of \cite{DuUncountable}.
	In particular, we may define ideals \(I_{\lambda}\) 
	in an analoguous way, given an analogue of
	Lemma 11 of \cite{DuUncountable};
	that is, we only need the following: 
	for \(0 < \mu < 1\),
	we need 
	elements \(g_{\mu} \in R\), 
	such that
	\(\LN(g_{\mu}) \in \omega(t^{\mu})\) 
	but \(\LN(g_{\mu}) \notin O^{\sup}(t^{\lambda})\) 
	for \(0 < \mu < \lambda < 1\). 
	Once we have these elements, 
	by the proof of Proposition 14
	in \cite{DuUncountable}, the ideals
	\(I_{\lambda}\) then satisfy the conditions
	of Kang-Park's theorem, showing 
	the uncountability of the Krull dimension of \(R\).


	By Proposition \ref{prop:leg:asymp},
	it is enough to show that there exists \(g_{\mu}\) 
	such that \(\newt(g_{\mu})\) is 
	asymptotic to \(\leg^{-1}(t^{\mu})\).
	By a standard Legendre transformation calculation
	\(\leg^{-1}(t^{\mu}) = c_{\mu}x^{-r_{\mu}}\)
	for positive constants \(c_{\mu}\) and \(r_{\mu}\).
	Then, as \(\newt\) discretely approximates
	all convex functions with limit zero, 
	it discretely approximates \(\leg^{-1}(t^{\mu})\).
	By a short calculation, we can see that
	the piecewise-linear function with integral
	values at \(f\) is asymptotic to \(f\) for any
	\(f\).
	Thus, we have such an \(g_{\mu}\) and the theorem
	is proved.
\end{proof}

\section{Uncountability of the Krull Dimension of Various Rings} 

\begin{cor}[Kang-Park, Du]
	Let \(V\) be a nondiscrete valuation ring.
	Then \(V[[x]]\) has uncountable Krull dimension.
\end{cor}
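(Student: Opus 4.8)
The plan is to obtain this as a special case of Du's theorem in the form of Theorem~\ref{thm:du:uncountable}: it is enough to equip $V[[x]]$ with a Newton polygon formalism of domain $(0,1]$ and to verify that this formalism discretely approximates every convex $G \in \contposreal$ with $\lim_{i\to\infty} g(i) = 0$.

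First I would build the formalism. Note that $V[[x]]$ is precisely $V\malcevt$ in the case $\Gamma = \mathbb{Z}$, since every subset of $\mathbb{Z}_{\geq 0}$ is automatically well-ordered, so no convergence restriction is imposed. The valuation of $V$ makes it a valued ring, so Corollary~\ref{cor:malcevp:mult} shows that the $x$-adic Gauss norm of radius $s$ is multiplicative for every $s \in (0,1]$; feeding this into Proposition~\ref{prop:constr:npf} (the $t$-adic case, with $U = (0,1]$) produces a Newton polygon formalism on $V[[x]]$ in which $\newt(f)$ is the nonincreasing lower convex hull of the points $(i, v(a_i))$, $f = \sum_i a_i x^i$, and $\LN(f)$ is the family $s \mapsto v_s(f)$ of Gauss valuations.

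Next I would check discrete approximation. Fix a convex $G \in \contposreal$ with $g(i) \to 0$. Being positive, convex and vanishing at infinity, $G$ is nonincreasing, and hence so is its discrete approximation $\tilde G$, the piecewise-linear interpolation of the nodes $(n, G(n))$; moreover $\tilde G$ is convex, the consecutive slopes $G(n+1)-G(n)$ being nondecreasing, so none of these nodes is discarded when one forms the lower convex hull. The role of the hypothesis that $V$ is nondiscrete is exactly that its value group is dense, so for each $n$ I may choose $a_n \in V$ with $v(a_n)$ lying just above $G(n)$, within an error $\eta_n > 0$ that I am free to make as small as I please (say $\eta_n / G(n) \to 0$, with $a_n$ a unit on any tail where $G$ vanishes). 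Put $r = \sum_{n \geq 0} a_n x^n \in V[[x]]$ --- a genuine power series, so convergence costs nothing. Since every point $(n, v(a_n))$ lies in the epigraph of the convex function $\tilde G$, the hull satisfies $\newt(r) \geq \tilde G$; and on each interval $[n,n+1]$ the hull lies below the chord joining $(n, v(a_n))$ and $(n+1, v(a_{n+1}))$, whence $\newt(r) \leq \tilde G + \max\{\eta_n, \eta_{n+1}\}$. With the $\eta_n$ chosen so that $\max\{\eta_n,\eta_{n+1}\} \in o(\tilde G)$, and the slope increments $v(a_{n+1}) - v(a_n)$ correspondingly close to $G(n+1)-G(n)$, this says that $\newt(r)$ and $\tilde G$, together with their secant slopes, are asymptotically arbitrarily close. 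Hence $\newt$ discretely approximates $G$, and Theorem~\ref{thm:du:uncountable} yields that $V[[x]]$ has uncountable Krull dimension.

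The one place demanding care is this last estimate: one must check that perturbing the ideal coefficient heights $G(n)$ by the errors $\eta_n$ spoils neither the height asymptotics of the Newton polygon nor the asymptotics of its secant slopes --- the latter being the delicate point near the integer nodes, where $\tilde G$ has corners and one must compare one-sided slopes. This is exactly where convexity of $G$ pays off, guaranteeing that the nodes $(n, G(n))$ are already in convex position so that the hull of the perturbed nodes follows $\tilde G$ to within $O(\sup_n \eta_n)$ locally, and where nondiscreteness of $V$ pays off, letting us drive the relative errors to zero; the remainder is routine manipulation of the definitions of asymptotic arbitrary closeness and of little-$\omega$ notation.
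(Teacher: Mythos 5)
Your proposal is correct and follows essentially the same route as the paper: equip $V[[x]]$ with the standard ($x$-adic Gauss norm) Newton polygon formalism, use nondiscreteness of the valuation to pick coefficients $a_i$ with $v(a_i)$ arbitrarily close to $G(i)$, observe that $x$-adic completeness makes the infinite sum $r=\sum_i a_i x^i$ harmless, and invoke Theorem~\ref{thm:du:uncountable}. You simply spell out the error estimates and the provenance of the formalism (via Proposition~\ref{prop:constr:npf}) in more detail than the paper does.
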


\begin{proof}
	We have the obvious Newton polygon formalism.
	To use Theorem \ref{thm:du:uncountable}
	(Du's argument),
	we need to show that 
	this Newton polygon formalism discretely approximantes
	(see \ref{def:discrete:approx})
	any (convex) function with
	limit zero.
	This follows quickly from nondiscreteness of the
	valution on \(V\), we will spell the argument
	out a bit here as all other results of this type will follow
	by a verbatim argument.

	Take a (convex) function \(f\) in \(\contposreal\) which has limit
	zero as \(t \to \infty\). 
	Let \(g\) be the discrete approximation of this function,
	we wish to construct a Newton polygon which 
	is asymptotically arbitrarily close to \(g\),
	which has secant lines which are also asymptotically
	arbitrarily close to \(g\).
	Since the valuation on \(V\) is nondiscrete, one may
	simply choose coefficients \(a_{i}\) which are
	arbitrarily close to \(f(i) (= g(i))\). 
	By choosing close enough \(a_{i}\), one verifies that
	Newton polygon of the element
	\(r = \sum_{i}^{} a_{i}x^{i} \) 
	has the desired properties.
\end{proof}

Note that \(r\) in the previous proof 
may be an infinite sum, so we are 
using the fact that \(V[[x]]\) is
\(x\)-adically complete.

\begin{cor}[Du]
	\(\mathbb{A}_{\inf}\) has uncountable Krull dimension.
\end{cor}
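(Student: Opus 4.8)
The plan is to realize $\mathbb{A}_{\inf} = W(\mathcal{O}_K^\flat)$ as a ring carrying a Newton polygon formalism of the type produced by Proposition~\ref{prop:constr:npf}, and then verify the discrete-approximation hypothesis of Theorem~\ref{thm:du:uncountable} exactly as in the power series case. Recall from the Example following Proposition~\ref{prop:constr:npf} that, writing $\mathbb{A}_{\inf}$ in the style $\sum_i a_i p^i$ with $a_i$ running over (multiplicative lifts of) elements of $\mathcal{O}_K^\flat$, the $p$-adic Gauss norms of radius $s$ are multiplicative for $s \in (0,1]$ (this is \cite[Chapter 3]{FFCourbes}), so the associated valuations $v_s$ assemble into a Newton polygon formalism $\newt$ with domain $(0,1]$. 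This is the same family of Newton polygons used by Du.

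First I would recall that $\mathcal{O}_K^\flat$ is the valuation ring of a perfectoid field of characteristic $p$, hence is nondiscretely valued. Second, I would observe that $\mathbb{A}_{\inf}$ is $p$-adically complete, so it admits infinite sums $\sum_i a_i p^i$ with $a_i$ arbitrary multiplicative representatives. Third, I would run the verbatim argument from the proof of the Kang--Park--Du corollary on $V[[x]]$: given any convex $G \in \contposreal$ with $\lim_{i\to\infty} G(i) = 0$, form its discrete approximation, then use nondiscreteness of the valuation on $\mathcal{O}_K^\flat$ to choose multiplicative representatives $a_i$ with $v(a_i)$ arbitrarily close to $G(i)$; the element $r = \sum_i a_i p^i \in \mathbb{A}_{\inf}$ then has $\newt(r)$ asymptotically arbitrarily close to $G$, with secant slopes also asymptotically arbitrarily close, so $\newt$ discretely approximates every such $G$. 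Theorem~\ref{thm:du:uncountable} then immediately gives uncountable Krull dimension.

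There is essentially no hard step here once the earlier machinery is in place: the one point that deserves a sentence is that the coefficient slot in the $p$-adic Mal'cev--Neumann/Witt description really is filled by $\mathcal{O}_K^\flat$ rather than by a discretely valued ring — i.e.\ that ``telling apart the coefficients from the variable'' here means treating $p$ as the variable and $\mathcal{O}_K^\flat$ as the coefficients, which is precisely what makes nondiscreteness available. The main (mild) obstacle is simply citing the multiplicativity of the $p$-adic Gauss norms on $\mathbb{A}_{\inf}$ correctly; everything else is the same verbatim argument flagged in the $V[[x]]$ proof, and I would say so explicitly rather than repeating it.
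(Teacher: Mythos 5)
Your proposal is correct and follows essentially the same route as the paper: invoke the Example after Proposition~\ref{prop:constr:npf} (multiplicativity of the $p$-adic Gauss norms via \cite[Chapter 3]{FFCourbes}) to get the Newton polygon formalism, use nondiscreteness of the valuation on the Teichm\"uller lifts of $\mathcal{O}_K^\flat$ together with $p$-adic completeness to verify discrete approximation of every convex function with limit zero, and conclude by Theorem~\ref{thm:du:uncountable}. Your write-up is in fact slightly more explicit than the paper's about how the coefficients $a_i$ are chosen.
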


\begin{proof}
	The Gauss norms are multiplicative
	(again see \cite[Chapter 3]{FFCourbes})
	and the norm is nondiscrete on the image of the 
	multiplicative lift.
	Thus, we have a Newton polygon formalism on
	\(\mathbb{A}_{\inf}\), which discretely
	approximates any function with limit zero
	(by the nondiscreteness of the valuation
	on the image of the multiplicative lift
	and the fact that \(\mathbb{A}_{\inf}\)
	is \(\pi\)-complete).
	And we conclude by Theorem \ref{thm:du:uncountable}.
\end{proof}

We have now recreated the results from \cite{DuUncountable},
so we move on to new results:

\begin{lem} \label{lem:coperf:uncountable}
	Let \(V = W(\residue)\coperfx\) 
	(resp. \(\residue\coperfx\))
	for some perfect field \(\residue\) of characteristic \(p\).
	Then
	the \(p\)-adic completion
	(resp. \(t\)-adic completion) of
	\(V\malcevp\) (resp. \(V\malcevt\))
	has uncountable Krull dimension.
\end{lem}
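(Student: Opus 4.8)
The strategy is to verify the hypotheses of Theorem \ref{thm:du:uncountable} for the ring in question, namely to exhibit a Newton polygon formalism on the relevant completion of $V\malcevp$ (resp.\ $V\malcevt$) that discretely approximates every convex function $G \in \contposreal$ with $\lim_{i\to\infty} G(i) = 0$. The formalism will be built out of Gauss norms, so the work splits into two parts: (a) produce the formalism, and (b) check the discrete-approximation property, which is where the two features highlighted in the introduction --- completeness in one variable and nondiscreteness in another --- come into play.

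First I would construct the formalism. By Lemma \ref{lem:folklore:malcevp} (and Remark \ref{rmk:piadic:malcev} in the mixed-characteristic case) the $p$-adic (resp.\ $t$-adic) Mal'cev--Neumann ring $V\malcevp$ (resp.\ $V\malcevt$) carries $x$-adic Gauss norms of radius $s$ --- here $x$ is the Puiseux variable of $V = W(\residue)\coperfx$, while the ``coefficient'' direction is $p$ (resp.\ $t$). One checks these Gauss norms are multiplicative: this reduces, via Corollary \ref{cor:malcevp:mult}, to multiplicativity of the $x$-adic Gauss norms on $W(\residue)\coperfx$ itself, which holds since $W(\residue)\coperfx$ is a perfection-type ring over the valued ring $W(\residue)$ (one may also invoke Proposition \ref{prop:constr:npf} directly once the base norms are in hand). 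This gives, by Proposition \ref{prop:constr:npf}, a Newton polygon formalism on $V\malcevp$ (resp.\ $V\malcevt$) with domain $(0,1]$, and then Lemma \ref{lem:npf:completion} extends it across the $p$-adic (resp.\ $t$-adic) completion, since $V$ is exactly one of the rings listed there.

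Next I would check discrete approximation, running the argument verbatim from the proof of the Kang--Park--Du corollary for $V[[x]]$ above. Given a convex $H \in \contposreal$ with limit zero and its discrete approximation $G$ (nodes at $(n,H(n))$), I want an element $r$ whose Newton polygon tracks $G$ closely both in value and in secant slopes. The point is that the coefficient ring --- the image of the multiplicative lift, or equivalently the $x$-powers available in $W(\residue)\coperfx$ --- is nondiscretely valued, so for each $n$ one may pick a coefficient $a_n$ (a suitable power $x^{\gamma_n}$ times a unit, with $\gamma_n$ chosen in the value group arbitrarily close to $H(n)$) realizing a value as close to $H(n)$ as desired; convexity of $H$ guarantees the resulting lower convex hull has the matching shape, and choosing the approximations sharp enough forces $|\newt(r) - G|$ and the secant-slope discrepancies to lie in $o(G)$. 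Here one uses completeness in the coefficient ($p$- or $t$-adic) direction to make sense of the infinite sum $r = \sum_n a_n (\text{series variable})^n$ as an honest element of the completed ring. With the hypothesis of Theorem \ref{thm:du:uncountable} verified, uncountable Krull dimension follows.

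The main obstacle is the multiplicativity of the relevant Gauss norms on $V\malcevp$, i.e.\ step (a): one must confirm that the ``nonstandard'' Gauss norm --- in which $x$ plays the role usually played by the uniformizer and $p$ (or $t$) plays the role of the series variable --- is genuinely multiplicative on the full Mal'cev--Neumann ring, and this is precisely what Corollary \ref{cor:malcevp:mult} together with Example \ref{ex:gauss:sup} was set up to handle; the subtlety is that the supremum defining the norm need not be attained, so the naive ``leading coefficient'' argument fails and one must fall back on Lemma \ref{lem:only:argnorm}. Everything else is routine bookkeeping: unwinding the completion via Lemmas \ref{lem:folklore:malcevp} and \ref{lem:npf:completion}, and copying the nondiscreteness argument from the $V[[x]]$ case.
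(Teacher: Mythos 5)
Your proposal follows the paper's proof essentially step for step: multiplicativity of the Gauss norms via Corollary \ref{cor:malcevp:mult}, the formalism via Proposition \ref{prop:constr:npf}, extension across the completion via Lemma \ref{lem:npf:completion}, and discrete approximation from the nondiscreteness of the \(x\)-adic valuation on the coefficients (the \(p\)-power roots of \(x\)) together with completeness in the \(p\)- (resp.\ \(t\)-) direction. The only detail you omit is that the standing hypothesis for Corollary \ref{cor:malcevp:mult} requires the base ring to be \(p\)-complete in the mixed-characteristic case, so one should first replace \(V\) by \(\widehat{V}\), which is harmless since \(\widehat{V\malcevp} \isom \widehat{\widehat{V}\malcevp}\).
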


\begin{proof}
	\(V\) has an obvious family of \(p\)-adic valuations
	(resp. has an obvious valuation).
	Furthermore, in the
	\(p\)-adic case \(V\) embeds into \(W(V / p)\),
	so by Corollary \ref{cor:malcevp:mult}
	the Gauss norm on \(\widehat{V}\malcevp\)
	(resp. \(V\malcevt\)) is multiplicative.
	Note that to use 
	Corollary \ref{cor:malcevp:mult}
	we must use \(p\)-adic completion 
	of \(V\)
	in the \(p\)-adic case. 
	We may replace \(V\) with \(\widehat{V}\) without
	loss of generality as
	\[
	\widehat{V\malcevp} \isom \widehat{\widehat{V}\malcevp}
	.\] 

	Next, by Proposition \ref{prop:constr:npf},
	\(\widehat{V}\malcevp\) (resp. \(V\malcevt\))
	has a Newton polygon formalism.
	This extends to the \(p\)-adic completion
	(resp. \(t\)-adic completion)
	by Lemma \ref{lem:npf:completion}.
	As we have arbitrary \(p\)-th roots of \(x\),
	this valuation is clearly nondiscrete
	on elements of the image of the
	multiplicative lift
	(resp. the valuation is clearly nondiscrete
	on \(V\)).
	Thus, this formalism discretely approximates 
	any function with limit zero as \(s \to \infty\) 
	and we conclude by Theorem \ref{thm:du:uncountable}.
\end{proof}

We will use the previous lemma to show that 
the perfectoid Tate algebra of dimension one
has uncountable Krull dimension, by embedding
into the ring described in the lemma.
For our other applications,
we will do a similar trick with different rings:

\begin{lem} \label{lem:puiseux:uncountable}
	Let \(V = W(\residue)\puiseuxx\).
	Then the \(p\)-adic completion of
	\(V\malcevp\)
	has uncountable Krull dimension.
\end{lem}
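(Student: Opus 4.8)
The plan is to prove Lemma \ref{lem:puiseux:uncountable} by essentially the same argument as Lemma \ref{lem:coperf:uncountable}, replacing the perfectoid variable $x^{1/p^\infty}$ with the Puiseux variable $x^{1/\mathbb{N}}$ and checking that every input to the machinery still applies. First I would verify that $V = W(\residue)\puiseuxx$ carries a family of $p$-adic valuations: it is built from $W(\residue)$ by adjoining roots of $x$ and taking power series, and the $x$-adic Gauss norm (of any radius) restricts on $W(\residue)$ to the trivial valuation composed with the $p$-adic one, so Definition \ref{defn:normfamily:padic} is satisfied. I would also confirm the embedding $V \inj W(V/p)$: since $\residue$ is perfect, $V/p = \residue\puiseuxx$, and one has the multiplicative (Teichm\"uller-type) lift sending $x^{1/n}$ to $[x^{1/n}]$, exactly as in the co-perfectoid case, so $V$ sits inside $W(V/p)$.

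Next I would apply the multiplicativity results: by Corollary \ref{cor:malcevp:mult} the $p$-adic Gauss norms on $\widehat{V}\malcevp$ are multiplicative (after replacing $V$ by its $p$-adic completion, using $\widehat{V\malcevp}\isom\widehat{\widehat{V}\malcevp}$ as in the previous lemma), and hence by Proposition \ref{prop:constr:npf} this gives a Newton polygon formalism on $\widehat{V}\malcevp$. I would then extend this formalism to the $p$-adic completion via Lemma \ref{lem:npf:completion} — here one should note that Lemma \ref{lem:npf:completion} explicitly lists $\mathcal{O}_K\puiseuxx$ as an allowed base ring, which is precisely the shape of $V$, so this step is immediate.

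The remaining point is nondiscreteness: since $V$ contains $x^{1/n}$ for every $n$, the $x$-adic valuation takes all values in $\frac{1}{n}\mathbb{Z}$, hence is nondiscrete on the image of the multiplicative lift. Combined with $\pi$-completeness of the ($p$-adic, i.e. $\pi = p$) completion, this lets one build Newton polygons of arbitrary convex shape approaching $0$: given a convex $G$ with $\lim_{i\to\infty}G(i)=0$, one chooses exponents in $\frac{1}{n}\mathbb{Z}$ so that the discrete approximation of $G$ is matched asymptotically arbitrarily closely (both in values and in secant slopes), exactly as spelled out in the proof of Corollary (Kang--Park, Du) for $V[[x]]$. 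Therefore the formalism discretely approximates every convex function with limit zero, and Theorem \ref{thm:du:uncountable} yields uncountable Krull dimension. The only mildly delicate step is checking that the Puiseux exponents (lying in $\mathbb{Q}$ rather than $\mathbb{Z}[1/p]$) still suffice for the ``asymptotically arbitrarily close secant slopes'' condition of Definition \ref{def:discrete:approx}; but since $\bigcup_n \frac{1}{n}\mathbb{Z}$ is dense in $\mathbb{R}$, the same coefficient-choosing argument goes through verbatim, so I do not expect a genuine obstacle here.
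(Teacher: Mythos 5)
Your proposal is correct and follows exactly the route the paper takes: the paper's own proof of this lemma is simply ``same proof as Lemma \ref{lem:coperf:uncountable}, with the obvious family of $p$-adic valuations, the obvious embedding $V\inj W(V/p)$, and nondiscreteness from the roots of $x$,'' and you have filled in precisely those verifications (multiplicativity via Corollary \ref{cor:malcevp:mult}, the formalism via Proposition \ref{prop:constr:npf} and Lemma \ref{lem:npf:completion}, and discrete approximation from density of $\bigcup_n \frac{1}{n}\mathbb{Z}$). No gaps.
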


\begin{proof}
	We use the same proof as Lemma \ref{lem:coperf:uncountable}.
	Again, \(V\) has an obvious \(p\)-adic family
	of valuations,
	and embeds into \(W(V / p)\) by the
	obvious map.
	Again, the valuation is nondiscrete,
	so the formalism discretely approximates
	any function with limit zero.
\end{proof}

\begin{lem}
	\label{lem:coperfxpow:uncountable}
	Let \(V = W(\residue)\coperfxpow\).
	Then the \(p\)-adic completion of \(V\malcevp\)
	has uncountable Krull dimension.
\end{lem}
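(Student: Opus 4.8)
The plan is to run the argument of Lemmas \ref{lem:coperf:uncountable} and \ref{lem:puiseux:uncountable} essentially verbatim, taking \(V = W(\residue)\coperfxpow = W(\residue)[[x^{1 / p^{\infty}}]]\) as the base ring; the only new content is the verification that this \(V\) satisfies the same standing hypotheses on the base ring that were used there.

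First I would check that \(V\) carries a family of \(p\)-adic valuations in the sense of Definition \ref{defn:normfamily:padic}. By the Example following that definition it suffices to see that \(V / p = \residue\coperfxpow\) is perfect and that \(V\) embeds into \(W(V / p)\). Perfectness is immediate: in characteristic \(p\) the Frobenius on \(\residue[[x^{1 / p^{\infty}}]]\) sends \(\sum_{j} a_{j} x^{j}\) to \(\sum_{j} a_{j}^{p} x^{p j}\), and this is bijective because \(\residue\) is perfect and the rescaling \(j \mapsto j / p\) of exponents preserves the convergence condition defining the completion. For the embedding, combine the functorial map \(W(\residue) \to W(V / p)\) with the rule \(x^{1 / p^{n}} \mapsto [x^{1 / p^{n}}]\) on monomials, where \([\cdot]\) denotes the multiplicative lift; since \([x^{a}][x^{b}] = [x^{a + b}]\), this is a ring map on the perfection \(W(\residue)\coperfx\), and it extends by continuity to an injection \(V \inj W(V / p)\), exactly like the ``obvious maps'' in Lemmas \ref{lem:coperf:uncountable} and \ref{lem:puiseux:uncountable}. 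Injectivity follows by reducing mod \(p\) (where the monomials \(x^{j}\) are linearly independent over \(\residue\)) and then inducting on the \(p\)-divisibility of the coefficients.

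Granting this, I would argue exactly as in Lemma \ref{lem:coperf:uncountable}: replace \(V\) by its \(p\)-adic completion \(\widehat{V}\) without loss of generality, using \(\widehat{V\malcevp} \isom \widehat{\widehat{V}\malcevp}\); apply Corollary \ref{cor:malcevp:mult} for multiplicativity of the \(p\)-adic Gauss norms on \(\widehat{V}\malcevp\); apply Proposition \ref{prop:constr:npf} to produce a Newton polygon formalism on \(\widehat{V}\malcevp\); and apply Lemma \ref{lem:npf:completion} to extend it to the \(p\)-adic completion of \(V\malcevp\). It then remains to check that this formalism discretely approximates, in the sense of Definition \ref{def:discrete:approx}, every convex \(G \in \contposreal\) with \(\lim_{i \to \infty} G(i) = 0\). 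Since all \(p\)-power roots of \(x\) have been adjoined, the base valuation on \(\widehat{V}\) is nondiscrete on the image of the multiplicative lift; so, writing \(g\) for the discrete approximation of \(G\), we may choose monomials \(x^{j_{n}}\) with \(j_{n} \in \mathbb{Z}[1 / p]_{\geq 0}\) whose base valuation is as close as we like to \(g(n)\), and set \(r = \sum_{n} x^{j_{n}} p^{n}\), an infinite, \(p\)-adically convergent sum, which is where \(p\)-completeness enters. For the \(j_{n}\) chosen close enough, the Newton polygon of \(r\) is asymptotically arbitrarily close to \(g\) and has the secant-line behaviour demanded by Definition \ref{def:discrete:approx}, so \(\newt\) discretely approximates \(G\). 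Theorem \ref{thm:du:uncountable} then applies, showing that the \(p\)-adic completion of \(V\malcevp\) has uncountable Krull dimension.

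The only real obstacle is the verification in the second paragraph: one must be sure that passing from the perfection \(W(\residue)\coperfx\) to its completion \(W(\residue)\coperfxpow\) preserves perfectness of the reduction mod \(p\) and the embedding into Witt vectors, that is, that \(W(\residue)\coperfxpow\) is as well behaved as the rings \(W(\residue)\coperfx\) and \(W(\residue)\puiseuxx\) already treated in Lemmas \ref{lem:coperf:uncountable} and \ref{lem:puiseux:uncountable}. Once that compatibility is established, the argument is identical to those of the previous two lemmas.
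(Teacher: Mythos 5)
Your proposal is correct and takes essentially the same approach as the paper: the paper's entire proof is the single sentence ``Analogous proof to Lemmas \ref{lem:puiseux:uncountable} and \ref{lem:coperf:uncountable},'' and your write-up simply spells out that analogy, including the verification (left implicit in the paper) that \(W(\residue)\coperfxpow\) has a perfect reduction mod \(p\) and embeds into \(W(V/p)\) so that the standing hypotheses of Corollary \ref{cor:malcevp:mult} and Proposition \ref{prop:constr:npf} apply.
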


\begin{proof}
	Analogous proof to \ref{lem:puiseux:uncountable}
	and \ref{lem:coperf:uncountable}.
\end{proof}

\subsection{The perfectoid Tate algebra over \(\mathcal{O}_{K}\)}

Let \(K\) be a perfectoid field of characteristic \(p\).
Suppose to begin that \(K\) 
is spherically complete.
Then
\(K \isom \residue\malcevt\).
Pick \(t\) as our pseudo-uniformizer.
Then the completion of the ring 
\(\residue\malcevt\coperfx\)
is isomorphic to \(\tateone{K}\).
We consider the obvious map
\[
i \colon \residue\malcevt\coperfx \inj \residue\coperfx\malcevt
.\] 
In the latter ring, the ``roles of \(x\) and \(t\) are swapped''.
We can thus use Lemma \ref{lem:npf:injection} to conclude
that the ring on the left has a Newton polygon formalism,
as we know that the one on the right does from
the proof of \ref{lem:coperf:uncountable}.
We need only show that this formalism discretely approximates
any function with limit zero.
We see that as the Mal'cev-Neumann series variable \(t\) has
a preimage in \(\residue\malcevt\coperfx\), and
that the valuation remains nondiscrete on the preimage
\(\residue\coperfx\),
this indeed holds and we may conclude
that \(\tateone{K}\) has uncountable Krull dimension.

Now, let \(K\) be an arbitrary 
perfectoid field of characteristic \(p\),
use the following chain of inclusions:
\[
K\inj K^{alg} \inj K^{sp} \isom \residue\malcevtrat
.\] 

By \cite{KedPoon}, Theorem 3.3, there always exists
an embedding from \(K^{alg} \to K^{sp}\) which takes
a given pseudo-uniformizer \(\pi\) to the variable
\(t\) (simply post-compose with an automorphism of
\(\residue\malcevtrat\)),
where \(\kappa\) is the residue field of \(K^{alg}\).
Moreover, if we choose a pseudo-uniformizer \(\pi \in K\),
it remains a pseudo-uniformizer in a fixed algebraic
closure.

\begin{thm}
	\label{thm:integraltate:uncountable}
	Let \(K\) be a perfectoid field of characteristic
	\(p\).
	Then \(\tateone{K}\) has uncountable Krull dimension.
\end{thm}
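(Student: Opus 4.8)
The plan is to bootstrap from the spherically complete case, which the preceding discussion has essentially settled, and then descend to an arbitrary perfectoid field $K$ of characteristic $p$ by an embedding argument. The point that makes the descent work is that the Newton polygons witnessing discrete approximation can be realized by elements of $\tateone{K}$ itself: in the ``swapped'' picture those elements only involve powers of the pseudo-uniformizer $\pi$ (playing the role of the index variable) and powers of $x^{1/p^{\infty}}$ (playing the role of the nondiscretely valued coefficients), so they never leave $\tateone{K}$.

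First I would record the output of the preceding paragraph, applied to $K^{sp}$ in place of $K$: identifying $\tateone{K^{sp}}$ with the completion of $\residue\malcevt\coperfx$ (where $\residue$ is the algebraically closed residue field of $K^{alg}$), pushing forward along the swap inclusion $i\colon \residue\malcevt\coperfx \inj \residue\coperfx\malcevt$, and transporting the Newton polygon formalism on $\widehat{\residue\coperfx\malcevt}$ (which exists by the proof of Lemma \ref{lem:coperf:uncountable} via Proposition \ref{prop:constr:npf}) back through Lemma \ref{lem:npf:injection}, one gets a Newton polygon formalism on $\tateone{K^{sp}}$ that discretely approximates every convex $G\in\contposreal$ with $\lim_{i\to\infty}G(i)=0$. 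Inspecting that argument, the witnessing element for such a $G$ may be taken to be $g_{G}\colonequals\sum_{n\in\mathbb{N}}\pi^{n}x^{c_{n}}$, where $c_{n}\in\mathbb{Z}[1/p]_{\geq 0}$, $c_{n}\to 0$, and $c_{n}$ is chosen as close to $G(n)$ as we like (possible since the $x$-adic valuation on $\residue\coperfx$ is nondiscrete); its image in $\widehat{\residue\coperfx\malcevt}$ is $\sum_{n}x^{c_{n}}t^{n}$, whose Newton polygon is the lower convex hull of the points $(n,c_{n})$, which is asymptotically arbitrarily close --- together with its secant slopes --- to the discrete approximation of $G$, exactly as in the Kang--Park/Du corollary.

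Next, for arbitrary $K$ of characteristic $p$, I would use the chain $K\inj K^{alg}\inj K^{sp}\isom\residue\malcevtrat$ fixed above, arranged (via \cite{KedPoon}) so that the chosen pseudo-uniformizer $\pi\in K$ maps to the series variable $t$. On integral levels this is $\mathcal{O}_{K}\inj\mathcal{O}_{K^{sp}}=\residue\malcevt$ with $\pi\mapsto t$, and after $\pi$-adic (equivalently $t$-adic) completion it yields an inclusion $\tateone{K}\inj\tateone{K^{sp}}$. By Lemma \ref{lem:npf:injection} the Newton polygon formalism on $\tateone{K^{sp}}$ restricts to one on $\tateone{K}$. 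The key observation is that the witnessing elements $g_{G}=\sum_{n}\pi^{n}x^{c_{n}}$ already lie in $\tateone{K}$ --- they converge $\pi$-adically there because the $n$-th term has $\pi$-adic valuation at least $n$ --- and, being mapped to the same elements of $\widehat{\residue\coperfx\malcevt}$ as before, still have the Newton polygons computed in the previous step. Hence the restricted formalism on $\tateone{K}$ discretely approximates every convex function with limit zero, and Theorem \ref{thm:du:uncountable} gives that $\tateone{K}$ has uncountable Krull dimension.

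I expect the main obstacle to be the completion bookkeeping: verifying that $\tateone{K}\inj\tateone{K^{sp}}$ and the $t$-adically completed swap map are genuinely injective and compatible with the relevant filtrations, so that Lemma \ref{lem:npf:injection} really applies and the Newton polygon of $g_{G}$ computed ``downstairs'' in $\widehat{\residue\coperfx\malcevt}$ is indeed its Newton polygon for the restricted formalism on $\tateone{K}$. Everything else is either a direct appeal to the spherically complete discussion or the by-now-routine nondiscreteness-of-coefficients argument; the conceptual content is simply that, because $\pi$ serves as the index variable and $x^{1/p^{\infty}}$ as the (nondiscretely valued) coefficients, the approximating elements are already defined over $\mathcal{O}_{K}$.
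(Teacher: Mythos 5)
Your proposal is correct and follows essentially the same route as the paper: embed $\mathcal{O}_{K}\coperfx$ into the spherically complete case via $K\inj K^{alg}\inj K^{sp}$ with $\pi\mapsto t$, restrict the ``swapped'' Newton polygon formalism through Lemma \ref{lem:npf:injection}, and observe that the discrete-approximation witnesses are already $\pi$-adically convergent elements of $\tateone{K}$ because the nondiscretely valued coefficients live in $\mathcal{O}_{K}\coperfx$. Your explicit description of the witnessing elements $\sum_{n}\pi^{n}x^{c_{n}}$ is just a spelled-out version of the paper's terser appeal to nondiscreteness of the Gauss norm on coefficients.
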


\begin{proof}
	Let \(\residue\) be the residue field of \(K^{alg}\),
	and let \(\residue_{K} \in \residue\) be the residue
	field of \(K\).
	Then, by the spherically complete case above, 
	\(\mathcal{O}_{K^{sp}}\coperfx\) 
	has a Newton polgon formalism,
	and we see that along the above mentioned embedding
	\(\mathcal{O}_{K}\coperfx \) has a
	Newton polygon formalism for which 
	\(\pi\) has a preimage.
	Since the Gauss norm remains nondiscrete
	on elements which \(p\) does not divide,
	and \(\tateone{K}\)
	is the completion of
	\(\mathcal{O}_{K}\coperfx \),
	we conclude by Theorem \ref{thm:du:uncountable}.

\end{proof}

If \(K\) has characteristic zero, then
we can run a similar argument:
\[
	K \inj K^{alg} \inj K^{sp} \isom W(\residue)\malcevp
,\] 
similarly to the characteristic \(p\) case.
Notice that \(p\) is always a pseudo-uniformizer.

\begin{thm}
	Let \(K\) be a perfectoid field of characteristic zero.
	Then \(\tateone{K}\) has uncountable Krull dimension.
\end{thm}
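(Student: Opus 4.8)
The plan is to run the argument of Theorem \ref{thm:integraltate:uncountable} essentially verbatim, with the characteristic $p$ Mal'cev--Neumann series $\residue\malcevt$ replaced by the $p$-adic (or, invoking Remark \ref{rmk:piadic:malcev}, the $\pi$-adic) Mal'cev--Neumann series $W(\residue)\malcevp$, and with the role of the uniformizer $t$ played by $p$ (or a chosen pseudo-uniformizer $\pi$). As emphasized in the introduction, nothing here can be deduced formally from the characteristic $p$ theorem because tilting is not additive; however, every ingredient needed is already available directly in the $p$-adic setting: multiplicativity of the Gauss norm (Corollary \ref{cor:malcevp:mult}), the passage from a continuous family of multiplicative Gauss norms to a Newton polygon formalism (Proposition \ref{prop:constr:npf}), its extension along $\pi$-adic completion (Lemma \ref{lem:npf:completion}), and the uncountability input (Theorem \ref{thm:du:uncountable}).

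First I would treat the spherically complete case. If $K$ is spherically complete then $\mathcal{O}_K \isom W(\residue)\malcevp$, and taking $p$ as pseudo-uniformizer, $\tateone{K}$ is the $p$-adic completion of $W(\residue)\malcevp\coperfx$. I would then use the ``swap'' embedding
\[
i \colon W(\residue)\malcevp\coperfx \inj W(\residue)\coperfx\malcevp,
\]
in which the adjoined variable $x$ is reinterpreted as living in the coefficient ring while $p$ becomes the Mal'cev--Neumann variable. By Lemma \ref{lem:coperf:uncountable} the $p$-adic completion of the target carries a Newton polygon formalism coming from multiplicative $p$-adic Gauss norms; by Lemma \ref{lem:npf:injection} this restricts to a Newton polygon formalism on (the completion of) the source. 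The uniformizer $p$ has a preimage under $i$, and the associated valuation is nondiscrete on the coefficient ring $W(\residue)\coperfx$ — concretely on the image of the multiplicative lift, which contains $x^{1/p^n}$ of valuation $1/p^n \to 0$ — so the formalism discretely approximates every convex function with limit zero, and Theorem \ref{thm:du:uncountable} applies.

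For a general characteristic zero perfectoid field I would use the chain $K \inj K^{alg} \inj K^{sp} \isom W(\residue)\malcevprat$ recorded just before the statement, with $\residue$ the residue field of $K^{alg}$; by \cite{KedPoon}, Theorem 3.3 (post-composed with an automorphism of $W(\residue)\malcevprat$) one may arrange that a fixed pseudo-uniformizer $\pi \in K$ — e.g.\ $\pi = p$ — maps to the Mal'cev--Neumann variable, after which Remark \ref{rmk:piadic:malcev} licenses working $\pi$-adically throughout. Restricting the Newton polygon formalism on $\mathcal{O}_{K^{sp}}\coperfx$ obtained in the spherically complete step along the induced inclusion $\mathcal{O}_K\coperfx \inj \mathcal{O}_{K^{sp}}\coperfx$ (Lemma \ref{lem:npf:injection}) yields a Newton polygon formalism on $\mathcal{O}_K\coperfx$ for which $\pi$ has a preimage, and the Gauss norm stays nondiscrete on the elements not divisible by $\pi$; since $\tateone{K}$ is the $\pi$-adic completion of $\mathcal{O}_K\coperfx$, Lemma \ref{lem:npf:completion} extends the formalism to $\tateone{K}$, and Theorem \ref{thm:du:uncountable} concludes.

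The step I expect to be the main obstacle is verifying that the ``swap'' embedding $i$ is legitimate and that the hypotheses of the machinery survive it: one must check that $W(\residue)\coperfx$ is a $W(k)$-algebra equipped with a family of $p$-adic valuations and embedding into $W\big(W(\residue)\coperfx/p\big) \isom W(\residue\coperfx)$ (using that $\residue\coperfx$ is perfect), that $p$-adic carrying does not obstruct reinterpreting $x$ as a coefficient, and that the $p$-completeness hypothesis required by Corollary \ref{cor:malcevp:mult} holds — which is precisely why one passes to completions and why the identification $\widehat{V\malcevp} \isom \widehat{\widehat{V}\malcevp}$ from the proof of Lemma \ref{lem:coperf:uncountable} is used. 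Once these compatibilities are in place, the rest is a transcription of the characteristic $p$ argument.
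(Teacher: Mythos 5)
Your proposal is correct and follows the paper's proof essentially verbatim: the paper's own argument for this theorem is exactly "run the characteristic $p$ proof with $t$ replaced by $p$ and $\residue$ by $W(\residue)$," using the same swap embedding into $W(\residue)\coperfx\malcevp$, Lemma \ref{lem:coperf:uncountable}, Lemmas \ref{lem:npf:injection} and \ref{lem:npf:completion}, and Theorem \ref{thm:du:uncountable}. The only (immaterial) difference is that the paper notes that since $p$ is automatically a pseudo-uniformizer, no post-composition with an automorphism of $K^{sp}$ is needed in the reduction to the spherically complete case.
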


\begin{proof}
	Same argument as in characteristic \(p\),
	just replace \(t\) with \(p\),
	\(\residue\) with \(W(\residue)\), and
	\(\residue_{K}\) with \(W(\residue_{K})\).
	Since \(p\) is always a pseudo-uniformizer,
	we do not need to compose with an automorphism
	of \(K^{sp}\).
\end{proof}

Finally, we extend the proof to higher dimensional
Tate algebras:

\begin{cor} \label{cor:extend:triv}
	Let \(K\) be a perfectoid field.
	Then the perfectoid Tate algebra in \(n\) variables
	has uncountable Krull dimension.
\end{cor}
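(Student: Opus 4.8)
The plan is to reduce the $n$-variable case to the one-variable case, which has already been handled by Theorem~\ref{thm:integraltate:uncountable} (and its characteristic-zero analogue). The key observation is that the perfectoid Tate algebra in $n$ variables,
\[
	K\langle x_{1}^{1 / p^{\infty}}, \ldots, x_{n}^{1 / p^{\infty}}\rangle,
\]
contains the one-variable perfectoid Tate algebra $K\langle x_{1}^{1/p^{\infty}}\rangle$ as a subring via the obvious inclusion (adjoining the remaining variables). An infinite chain of distinct primes in the smaller ring does not automatically pull back or push forward to a chain in the larger ring, so the naive ``subring'' argument is not quite enough; instead I would use a retraction.

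First I would exhibit a ring homomorphism
\[
	\phi \colon K\langle x_{1}^{1 / p^{\infty}}, \ldots, x_{n}^{1 / p^{\infty}}\rangle
	\longrightarrow K\langle x_{1}^{1 / p^{\infty}}\rangle
\]
sending $x_{2}, \ldots, x_{n}$ to $0$ (this is the completed base change along $\mathcal{O}_{K}\langle x_{2}^{1/p^{\infty}},\ldots\rangle \to \mathcal{O}_{K}$, $x_{j}\mapsto 0$, which is well defined and continuous for the respective Banach topologies), together with the section $\iota$ that includes $K\langle x_{1}^{1/p^{\infty}}\rangle$ back in as the subring generated by $x_{1}^{1/p^{\infty}}$. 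Then $\phi \circ \iota = \mathrm{id}$, so $\phi$ is a split surjection. Given an uncountable chain of primes $\{\mathfrak{p}_{\lambda}\}$ in $K\langle x_{1}^{1/p^{\infty}}\rangle$ (which exists by Theorem~\ref{thm:integraltate:uncountable} together with the characteristic-zero theorem, these two covering all perfectoid fields $K$), the preimages $\{\phi^{-1}(\mathfrak{p}_{\lambda})\}$ form a chain of primes in the $n$-variable algebra; they are distinct because $\iota^{-1}(\phi^{-1}(\mathfrak{p}_{\lambda})) = \mathfrak{p}_{\lambda}$ recovers the original chain, so no two preimages can coincide. This yields an uncountable chain of primes in the $n$-variable algebra, hence uncountable Krull dimension.

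Alternatively — and this is probably cleaner to write — one could avoid the retraction entirely and instead run Du's machinery directly: construct a ``nonstandard'' Newton polygon formalism on the $n$-variable perfectoid Tate algebra by embedding it, after passing to the spherical completion of $K$ as in the one-variable proof, into a Mal'cev-Neumann ring of the form $\residue\langle x_{1}^{1/p^{\infty}}, \ldots, x_{n}^{1/p^{\infty}}\rangle\malcevt$ (or its $p$-adic analogue), which by Lemma~\ref{lem:npf:injection} inherits a Newton polygon formalism from the construction of Proposition~\ref{prop:constr:npf}. One then checks that the pseudo-uniformizer $\pi$ (mapped to the series variable $t$) has a preimage and that the relevant valuation remains nondiscrete on the coefficient ring $\residue\coperfx$ adjoined with the extra variables, so that the formalism still discretely approximates every convex function with limit zero, and concludes by Theorem~\ref{thm:du:uncountable}. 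The main obstacle in either approach is a bookkeeping one rather than a conceptual one: in the retraction argument one must verify that $\phi$ is genuinely a well-defined continuous ring map between the completed algebras (that setting $x_{j}=0$ is compatible with the Gauss-norm completion), while in the direct approach one must re-examine the nondiscreteness input to Theorem~\ref{thm:du:uncountable} in the presence of the additional variables. I expect the retraction route to be the shorter one to carry out in full.
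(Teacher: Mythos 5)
Your proof is correct and takes essentially the same route as the paper: the paper's proof is exactly the surjection onto the one-variable algebra obtained by evaluating \(x_{2},\ldots,x_{n}\) at zero, followed by pulling back the chain of primes. The section \(\iota\) you introduce is unnecessary, since surjectivity of \(\phi\) alone guarantees that preimages of distinct primes are distinct (as \(\phi(\phi^{-1}(\mathfrak{p}_{\lambda}))=\mathfrak{p}_{\lambda}\)).
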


\begin{proof}
	We use the surjective
	ring map 
	\[
	\mathcal{O}_{K}\langle x_{1}^{1 / p^{\infty}}, \ldots, x_{n}^{1 / p^{\infty}}\rangle 
	\to \tateone{K}
	,\] 
	which, say, evaluates \(x_{2}, \ldots, x_{n}\) at zero
	and pull back the ideals.
\end{proof}

\subsection{The perfectoid Tate algebra over \(K\)}

Let \(R = \tateone{K}\) be the perfectoid
Tate algebra in one variable over \(\mathcal{O}_{K} \).
We will show that \(p\) is not contained in any of the
prime ideals coming from the Kang-Park theorem,
and deduce that \(K\langle x^{1 / p^{\infty}} \rangle\) 
has uncountable dimension.

Following Du, let \(\mathfrak{m}\) be the ideal
of elements for which every coefficient has 
positive valuation. More precisely,

\[
\mathfrak{m} = \{\sum_{0 \leq i}^{} a_{i}\pi^{i} | 0 < v(a_{i}) \text{ for all } i \}
,\] 

where \(\pi\) is the chosen pseudo-uniformizer from the previous section
and the valuation \(v\) is the \(x\)-adic one.
We can see by construction that \(\mathfrak{p} \ins \mathfrak{m}\) 
and that all 
\(g_{\mu}\) are in \(\mathfrak{m}\), so 
\(I_{\lambda} \ins \mathfrak{m}\) for all \(0 < \lambda < 1\)

Now, recall the following from the proof of Theorem 3
of \cite{KangPark}: the prime ideals
\(p_{\lambda}\) coming from \(I_{\lambda}\) are defined
by taking a certain valuation ring \(W\) which contains
\(R\), extending the ideals \(I_{\lambda}\) to \(W\),
taking radicals, and restricting.
In other words,

\[
	\mathfrak{p}_{\lambda} = \sqrt{I_{\lambda}W} \cap R
.\] 

Note that as \(\mathfrak{m}\) is prime,
we have 
\(\sqrt{\mathfrak{m}W} \cap R \ins \mathfrak{m}\),
and in particular \(p_{\lambda} \ins \mathfrak{m}\) 
for all \(\lambda\).
We have shown

    
\begin{lem}
	 Let \(\pi\) be the chosen pseudo-uniformizer
	 from the previous section 
	 (i.e. whichever arbitrarily chosen one
	  in characteristic \(p\), always \(p\) 
	  in characteristic \(0\)).
     Any polynomial in \(\pi\) 
     that has a coefficient of valuation 0 is not 
     contained in \(\mathfrak{m}\), and thus
     is not contained in any other of the \(\mathfrak{p}_{\lambda}\)
     for \(0 < \lambda \leq 1\).
\end{lem}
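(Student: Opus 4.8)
The plan is to split off the second assertion from the first and then settle the first by unwinding the definition of \(\mathfrak m\). The second assertion requires nothing new: the discussion preceding the lemma exhibits the Kang--Park primes as \(\mathfrak p_\lambda=\sqrt{I_\lambda W}\cap R\) and already establishes \(\mathfrak p_\lambda\subseteq\mathfrak m\) for every \(0<\lambda\le 1\) (together with the primeness of \(\mathfrak m\)), so any element outside \(\mathfrak m\) is automatically outside every \(\mathfrak p_\lambda\). Hence the entire content is the first claim: a polynomial in \(\pi\) whose (canonical) \(\pi\)-adic expansion has a coefficient of \(x\)-adic valuation \(0\) does not lie in \(\mathfrak m\).

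For this I would identify \(\mathfrak m\) as the kernel of the continuous \(\mathcal{O}_{K}\)-algebra homomorphism \(\varepsilon\colon R\to\mathcal{O}_{K}\) sending \(x^{1/p^{\infty}}\mapsto 0\). Indeed, on the canonical \(\pi\)-adic expansion \(f=\sum_i a_i\pi^i\) --- the one from Lemma~\ref{lem:malcevp:unique}, with the \(a_i\) taken from the multiplicative lift as in the Example following Definition~\ref{defn:normfamily:padic}, and in the \(t\)-adic (characteristic \(p\)) case just the Mal'cev--Neumann expansion --- the map \(\varepsilon\) replaces each coefficient by its image modulo the maximal ideal \(\{v>0\}\) of the coefficient ring and re-collects. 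A coefficient dies under \(\varepsilon\) precisely when it has positive \(x\)-adic valuation, so \(\varepsilon(f)=0\) exactly when \(f\in\mathfrak m\); in particular \(\mathfrak m\) is prime because \(\mathcal{O}_{K}\) is a domain. Writing our polynomial in this canonical form as \(g=\sum_{i=0}^{n}a_i\pi^i\) with some \(a_{i_0}\) of valuation \(0\), it then remains to check \(\varepsilon(g)\neq 0\).

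I would split this by characteristic. In characteristic \(p\) we have \(\pi=t\) and \(\varepsilon(g)=\sum_i\bar a_i t^i\in\residue\malcevt\isom\mathcal{O}_{K}\) is a nonzero polynomial in \(t\) over \(\residue\), since its \(t^{i_0}\)-coefficient \(\bar a_{i_0}\) is nonzero; hence \(g\notin\mathfrak m\). In characteristic \(0\) we have \(\pi=p\), the \(a_i\) are multiplicative (Teichm\"uller-type) lifts, and \(\varepsilon(g)=\sum_i\bar a_i p^i\) is the element of \(W(\residue)\subseteq\mathcal{O}_{K}\) whose \(p\)-adic digits are the \(\bar a_i\); since \(\bar a_{i_0}\neq 0\) this Witt vector is nonzero, so again \(g\notin\mathfrak m\), and therefore \(g\notin\mathfrak p_\lambda\) for all \(0<\lambda\le 1\).

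The one step that genuinely needs care --- and the reason I would insist on the canonical form rather than an arbitrary polynomial presentation --- is \(p\)-adic carrying in characteristic \(0\): since \(\pi=p\) is itself a uniformizer of \(\mathcal{O}_{K}\), a polynomial in \(\pi\) written with unrestricted coefficients can conceal cancellation (already \(p-p=0\)), so ``a coefficient of valuation \(0\)'' must be read off the normalized expansion of Lemma~\ref{lem:malcevp:unique}. Once normalized, the non-interaction of distinct multiplicative lifts (equivalently, of distinct \(t\)-monomials in characteristic \(p\)) is exactly what makes \(\varepsilon(g)\neq 0\) transparent, and the rest is routine bookkeeping with the definitions.
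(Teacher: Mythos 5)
Your proof is correct and follows essentially the same route as the paper: the containment \(\mathfrak{p}_{\lambda}\subseteq\mathfrak{m}\) is taken verbatim from the preceding discussion, and the first claim is immediate from the definition of \(\mathfrak{m}\) once the coefficients are read off the canonical \(\pi\)-adic expansion. Your reformulation of \(\mathfrak{m}\) as the kernel of evaluation at \(x=0\) is a mild embellishment rather than a different argument, though it has the merit of justifying the primeness of \(\mathfrak{m}\) (which the paper asserts without proof) and of making explicit the normalization needed to rule out \(p\)-adic carrying in characteristic zero.
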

    
    

The following corollary is an immediate specialization:

\begin{cor} \label{cor:field:uncountable}
    The element \(\pi\) is not in
    any of the \(\mathfrak{p}_{\lambda}\).
	Thus, the perfectoid ring \(R[1 / p]\)
	has uncountable Krull dimension.
	So the perfectoid Tate algebra over \(K\) in
	\(n\) variables
	has uncountable Krull dimension, regardless of characteristic.
\end{cor}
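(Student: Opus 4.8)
The plan is to read off each of the three assertions of the corollary from results already in hand, so the argument is essentially bookkeeping. For $\pi\notin\mathfrak{p}_{\lambda}$: I would observe that the element $\pi\in R=\tateone{K}$ is the monomial $1\cdot\pi^{1}$, so in its expansion $\pi=\sum_{0\le i}a_{i}\pi^{i}$ the only nonzero coefficient is the unit $a_{1}=1$, which has $x$-adic valuation $0$. Hence $\pi$ is a polynomial in $\pi$ with a coefficient of valuation $0$, and the preceding Lemma applies verbatim: $\pi\notin\mathfrak{m}$, and therefore $\pi\notin\mathfrak{p}_{\lambda}$ for all $0<\lambda\le 1$ since each $\mathfrak{p}_{\lambda}$ is contained in $\mathfrak{m}$.

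Next I would deduce that $R[1/p]$ has uncountable Krull dimension. Here I read ``$R[1/p]$'' as the localization $R[1/\pi]$ at the chosen pseudo-uniformizer (Remark \ref{rmk:piadic:malcev}), which literally is $R[1/p]$ when $K$ has characteristic $0$ and which is, by definition of the perfectoid Tate algebra, the ring $K\langle x^{1/p^{\infty}}\rangle$ over $K$. The localization map $\Spec R[1/\pi]\to\Spec R$ is an order-preserving injection whose image is exactly the set of primes of $R$ not containing $\pi$; by the previous step every $\mathfrak{p}_{\lambda}$ lies in that image, so the uncountable totally ordered family $\{\mathfrak{p}_{\lambda}\}$ produced by the Kang--Park construction behind Theorem \ref{thm:integraltate:uncountable} (cf. \cite{KangPark}) transports to an uncountable chain of distinct primes of $R[1/\pi]=K\langle x^{1/p^{\infty}}\rangle$, giving it uncountable Krull dimension.

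For the $n$-variable statement I would copy the argument of Corollary \ref{cor:extend:triv}: the substitution $x_{1}\mapsto x$ and $x_{i}\mapsto 0$ for $i\ge 2$ defines a surjection $K\langle x_{1}^{1/p^{\infty}},\ldots,x_{n}^{1/p^{\infty}}\rangle\twoheadrightarrow K\langle x^{1/p^{\infty}}\rangle$, and pulling the uncountable chain of primes back along this surjection (which induces an order-preserving injection on prime spectra) yields an uncountable chain of distinct primes in the $n$-variable algebra, in both characteristics since both were treated above.

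I do not expect a genuine obstacle; the substantive work sits in the preceding Lemma and in Theorem \ref{thm:integraltate:uncountable}. The only points that need a moment's care are identifying ``$R[1/p]$'' with $R[1/\pi]$, so that the $\pi$-avoidance just established is precisely what keeps the chain alive after localization, and recording that localization at $\pi$ and quotient by the kernel of the evaluation map both induce injective, order-preserving maps on $\Spec$, so that neither operation can merge two members of the chain or destroy its total ordering — both standard facts.
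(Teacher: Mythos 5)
Your proposal is correct and matches the paper's (implicit) argument: the paper treats this corollary as an ``immediate specialization'' of the preceding Lemma, and you supply exactly the intended steps --- $\pi=1\cdot\pi^{1}$ has a unit coefficient of valuation $0$, hence avoids $\mathfrak{m}$ and all $\mathfrak{p}_{\lambda}$, the chain survives localization at $\pi$ (correctly read as the meaning of ``$R[1/p]$'', which is forced in characteristic $p$), and the $n$-variable case follows by the surjection of Corollary \ref{cor:extend:triv}. No gaps.
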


\subsection{The absolute integral closure in mixed characteristic}

Let \((A, \mathfrak{m}, k)\) be a noetherian local domain of mixed characteristic
which is \(\mathfrak{m}\)-adically complete. 
We are concerned with the absolute integral closure \(A^{+}\) of \(A\).
In \cite{HeitmannRplus}, Heitmann asks what the Krull dimension of 
\(A^{+}\) is. 
We form the following conjectural answer.

\begin{conj}
	\label{conj:rplus:uncountable}
	Let \(\dim A = 2\).
	Then the Krull dimension of 
	the \(p\)-adic completion of \(A^{+}\) is uncountable.
\end{conj}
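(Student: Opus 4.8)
The plan is to equip $\widehat{A^{+}}$ with a Newton polygon formalism of domain $(0,1]$ that discretely approximates every convex function with limit zero, and then to apply Theorem \ref{thm:du:uncountable}. First I would put $A$ in standard form: by the Cohen structure theorem, a complete local domain of mixed characteristic with $\dim A = 2$ is module-finite over a subring $\mathcal{O}[[x]]$ with $\mathcal{O}$ a complete discrete valuation ring whose uniformizer divides $p$; since the absolute integral closure depends only on the fraction field up to finite extension, $A^{+} = \mathcal{O}[[x]]^{+}$, and we may assume $A = \mathcal{O}[[x]]$. Here $x$ is the single ``geometric'' variable algebraically independent from $p$ that the heuristic in the introduction asks for; $\widehat{A^{+}}$ is perfectoid, contains the (nondiscrete) integral closure $\overline{\mathcal{O}}$ of $\mathcal{O}$, with residue field $\overline{k}$ and value group $\mathbb{Q}$, and contains all the roots $x^{1/p^{\infty}}$.

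As in the treatment of the perfectoid Tate algebra, the correct ambient ring ``swaps the roles of $p$ and $x$'': $p$ becomes the Mal'cev-Neumann variable --- legitimate, since $\widehat{A^{+}}$ is $p$-adically complete --- and the nondiscrete $x$-adic valuation lives on the coefficient ring, which is what lets Newton polygons approach $0$. Concretely, take $V = W(\overline{k})\coperfxpow$ (or $V = W(\overline{k})\puiseuxx$) with its $x$-adic valuation and evident family of $p$-adic valuations. By Proposition \ref{prop:constr:npf} and Lemma \ref{lem:npf:completion}, the $p$-adic completion of $V\malcevp$ carries a Newton polygon formalism, and by Lemma \ref{lem:coperfxpow:uncountable} (resp.\ Lemma \ref{lem:puiseux:uncountable}) it discretely approximates every convex function with limit zero. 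The key step is to produce a continuous injection
\[
	\widehat{A^{+}} \inj \widehat{V\malcevp},
\]
i.e.\ a ``$p$-adic Newton--Puiseux expansion'' writing each element of $\widehat{A^{+}}$ as $\sum_{i} a_{i}p^{i}$ with $a_{i}$ a convergent $p$-power-indexed (resp.\ Puiseux) series in $x$ over $W(\overline{k})$ --- equivalently, after tilting, an embedding of $\widehat{A^{+}}^{\flat}$, a completed absolute integral closure of $\overline{k}[[x]]$, into $\overline{k}\coperfxpow\malcevt$ built from Newton--Puiseux expansions of elements algebraic over $\overline{k}[[x]]$ together with $x$-adic completion.

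Granting the embedding, one finishes exactly as in the proof of Theorem \ref{thm:integraltate:uncountable}: by Lemma \ref{lem:npf:injection}, $\widehat{A^{+}}$ inherits a Newton polygon formalism, and it remains to see that the inherited formalism still discretely approximates every convex function with limit zero. By Definition \ref{def:discrete:approx} it is enough that the Mal'cev-Neumann variable $p$ has a preimage --- it does, $p \in A^{+}$ --- and that the required shapes are realized inside $\widehat{A^{+}}$. For a prescribed convex decreasing $G$ with $G(i) \to 0$, choose exponents $d_{i} \in \mathbb{Z}[1/p]_{\geq 0}$ with $d_{i} \to 0$ whose lower convex hull approximates (the discrete approximation of) $G$, and set $g = \sum_{i \in \mathbb{N}} x^{d_{i}}p^{i}$; each finite partial sum is a polynomial in $p$ and $x^{1/p^{\infty}}$, hence integral over $\mathcal{O}[x]$ and so in $A^{+}$, while the partial sums converge $p$-adically because the $p$-exponents go to infinity, so $g \in \widehat{A^{+}}$ and its Newton polygon has nodes $(i,d_{i})$, which discretely approximates $G$. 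Theorem \ref{thm:du:uncountable} then yields an uncountable chain of primes in $\widehat{A^{+}}$; one moreover checks, as in Corollary \ref{cor:field:uncountable}, that none of these primes contains $p$, so $\widehat{A^{+}}[1/p]$ has uncountable Krull dimension as well.

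The hard part --- and the reason this remains a conjecture --- is the embedding $\widehat{A^{+}} \inj \widehat{V\malcevp}$ of the second paragraph, or equivalently the extension of the relevant family of multiplicative Gauss-type valuations from the perfectoid Tate subalgebra to all of $\widehat{A^{+}}$. In the equal-characteristic (function-field) picture used for the perfectoid Tate algebra this separation of ``coefficients'' from ``variable'' is clean; but for $A^{+}$ in mixed characteristic the two variables $p$ and $x$ genuinely mix, algebraic extensions of $\mathcal{O}[[x]]$ are not governed by a naive Newton--Puiseux theorem over $\mathcal{O}$, and $\widehat{A^{+}}$ is neither visibly of the form $V\malcevp$ nor visibly integral over the subring $\widehat{\overline{\mathcal{O}}}\langle x^{1/p^{\infty}}\rangle \subseteq \widehat{A^{+}}$ that it contains. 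What the above does give unconditionally is that this large (non-noetherian) subring --- a perfectoid Tate algebra over $\widehat{\overline{\mathcal{O}}}[1/p]$, hence of uncountable Krull dimension by Theorem \ref{thm:integraltate:uncountable} --- already witnesses the phenomenon; promoting it to $\widehat{A^{+}}$ itself would require the structural embedding above or a suitable lying-over statement along $\widehat{\overline{\mathcal{O}}}\langle x^{1/p^{\infty}}\rangle \inj \widehat{A^{+}}$.
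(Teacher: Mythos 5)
The statement you were asked to prove is labelled a \emph{conjecture} in the paper, and the paper offers no proof of it: after stating it, the author explicitly writes that ``with our machinery so far we are only able to show'' the weaker Proposition that the $p$-adic completion of $\mathcal{O}_{K}\puiseuxx$ (for $K$ algebraically closed of residue characteristic $p$) has uncountable Krull dimension. Your proposal does not close the conjecture either --- and, to your credit, you say so. What you have written is a correct reconstruction of the intended strategy together with an accurate diagnosis of the obstruction, and that diagnosis matches the paper's own reason for leaving the statement open: one needs a family of multiplicative Gauss-type valuations (equivalently, a Newton polygon formalism in the sense of Definition \ref{def:npf}) on \emph{all} of $\widehat{A^{+}}$, which in practice means an embedding of $\widehat{A^{+}}$ into a Mal'cev--Neumann-type ring $\widehat{V\malcevp}$ with $V$ carrying a nondiscrete $x$-adic valuation. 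Classical Newton--Puiseux supplies such an expansion only in residue characteristic zero (the paper states it in exactly that generality); in mixed characteristic, wild ramification in $x$ means elements of $A^{+}$ need not admit Puiseux or $p$-power-indexed expansions over $W(\overline{k})$, so the embedding is genuinely missing. Conditional on that embedding, your argument is sound: Lemma \ref{lem:npf:injection} would transfer the formalism, and your observation that the witnesses $g=\sum_{i} x^{d_{i}}p^{i}$ with $d_{i}\in\mathbb{Z}[1/p]_{\geq 0}$ already lie in $\widehat{A^{+}}$ (each partial sum being integral over $\mathcal{O}[x]$, with $p$-adic convergence of the tail) is a correct and slightly sharper point than anything the paper records --- it shows the only missing ingredient is the valuation-theoretic structure on the ambient ring, not the existence of elements with the required Newton polygons.

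Two small cautions. First, your closing remark that the subring $\widehat{\overline{\mathcal{O}}}\langle x^{1/p^{\infty}}\rangle\subseteq\widehat{A^{+}}$ ``already witnesses the phenomenon'' is consistent with the paper's introduction, but an uncountable chain of primes in a subring does not by itself produce one in $\widehat{A^{+}}$; one would need lying-over or going-up along a non-integral extension, which is exactly the kind of statement that is unavailable here. Second, be careful not to conflate the paper's Proposition about $\widehat{\mathcal{O}_{K}\puiseuxx}$ with a statement about $\widehat{A^{+}}$: the former is a ring that is not known to contain (or be contained in) $\widehat{A^{+}}$ in a way that transfers the chain of primes, which is precisely why the conjecture remains open.
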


Knowing this conjecture would be enough to answer Heitmann's question:

\begin{cor}
	Assume that Conjecture \ref{conj:rplus:uncountable}
	holds. 
	Let \(2 \leq \dim A\).
	Then the Krull dimension of 
	the \(p\)-adic completion of \(A^{+}\) is
	uncountable.
\end{cor}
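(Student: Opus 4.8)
The plan is to reduce the general statement to the dimension-two case of Conjecture \ref{conj:rplus:uncountable} by passing to a suitable quotient of \(A\). Write \(d = \dim A \geq 2\). Since \(A\) has mixed characteristic, \(p\) is a nonzerodivisor in the domain \(A\) lying in \(\mathfrak{m}\), so \(\dim A/(p) = d-1\) and \(p\) is part of a system of parameters \(p, y_{1}, \ldots, y_{d-1}\). First I would choose a minimal prime \(\mathfrak{p}\) over the ideal \((y_{1}, \ldots, y_{d-2})\) with \(\dim A/\mathfrak{p}\) as large as possible; since \(\{y_{1}, \ldots, y_{d-2}\}\) is a subset of a system of parameters, \(\dim A/(y_{1},\ldots,y_{d-2}) = d-(d-2)=2\), so \(\dim A/\mathfrak{p} = 2\). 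Moreover \(p \notin \mathfrak{p}\): otherwise \(\mathfrak{p} \supseteq (p, y_{1}, \ldots, y_{d-2})\) would force \(\dim A/\mathfrak{p} \leq d-(d-1) = 1\). Hence \(A/\mathfrak{p}\) is a noetherian local domain, complete as a quotient of the complete local ring \(A\), of dimension \(2\), with \(p \neq 0\) in it and residue field \(k\) of characteristic \(p\) — that is, of mixed characteristic. By Conjecture \ref{conj:rplus:uncountable}, the \(p\)-adic completion of \((A/\mathfrak{p})^{+}\) has uncountable Krull dimension.

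Next I would compare \(A^{+}\) with \((A/\mathfrak{p})^{+}\). Form \(A^{+}\) inside a fixed algebraic closure of \(\Frac(A)\) and choose \(\mathfrak{q} \in \Spec A^{+}\) lying over \(\mathfrak{p}\) (possible by lying over, as \(A^{+}\) is integral over \(A\)). It is standard that \(A^{+}/\mathfrak{q}\) is an absolute integral closure of \(A/\mathfrak{p}\): it is a domain integral over \(A/\mathfrak{p}\), and every monic polynomial over \(A^{+}/\mathfrak{q}\) has a root, since it lifts to a monic polynomial over \(A^{+}\) whose roots in the ambient algebraic closure are integral over \(A^{+}\), hence lie in \(A^{+}\). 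Thus \(\Frac(A^{+}/\mathfrak{q})\) is an algebraic closure of \(\Frac(A/\mathfrak{p})\) in which \(A^{+}/\mathfrak{q}\) is integrally closed, so \(A^{+}/\mathfrak{q} \isom (A/\mathfrak{p})^{+}\) as \(A/\mathfrak{p}\)-algebras.

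Finally I would pass to \(p\)-adic completions. The surjection \(A^{+} \surj A^{+}/\mathfrak{q}\) induces a surjection \(\widehat{A^{+}} \surj \widehat{A^{+}/\mathfrak{q}}\) on \(p\)-adic completions, since \(p\)-adic completion of modules preserves surjections (the kernels \(\mathfrak{q}/(\mathfrak{q} \cap p^{n}A^{+})\) form an inverse system with surjective transition maps, so \(\varprojlim^{1}\) vanishes). Combined with the isomorphism above, \(\widehat{A^{+}/\mathfrak{q}} \isom \widehat{(A/\mathfrak{p})^{+}}\), which has uncountable Krull dimension. A surjection of rings induces an order-preserving injection on spectra, so an uncountable chain of primes in \(\widehat{(A/\mathfrak{p})^{+}}\) pulls back to an uncountable chain in \(\widehat{A^{+}}\); hence the \(p\)-adic completion of \(A^{+}\) has uncountable Krull dimension.

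I do not expect a serious obstacle, as this is a routine dévissage; the steps needing the most care are the identification \(A^{+}/\mathfrak{q} \isom (A/\mathfrak{p})^{+}\) (i.e. that a quotient of an absolute integral closure by a prime is again an absolute integral closure) and the verification that \(p\)-adic completion preserves the relevant surjection so that the uncountable chain of primes descends.
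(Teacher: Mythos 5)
Your proposal is correct and follows essentially the same route as the paper: reduce to the dimension-two case by passing to a quotient of \(A^{+}\) by a prime lying over a suitable prime of \(A\), identify that quotient with the absolute integral closure of a two-dimensional complete local domain, and use that \(p\)-adic completion preserves surjections to pull back the uncountable chain. The only cosmetic difference is that the paper first invokes the Cohen structure theorem to take \(A = W(\residue)[[x_{1},\ldots,x_{n}]]\) and kills the coordinate ideal \((x_{2},\ldots,x_{n})\), citing Huneke for the identification \(A^{+}/\mathfrak{q} \isom (A/\mathfrak{p})^{+}\), whereas you produce the two-dimensional quotient via a system of parameters and prove that identification directly.
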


\begin{proof}
	WLOG \(A \isom W(\residue)[[x_{1}, \ldots, x_{n}]]\) 
	where \(n = \dim A - 1\).
	Pick a prime ideal \(\mathfrak{p}\) in \(A^{+}\) lying over
	\((x_{2}, \ldots, x_{n})\) (which exists
	as the extension is integral).
	Then, using e.g. \cite{HunekeRplusSurvey} (see Discussion 2.5),
	we have that 
	\(A^{+} / \mathfrak{p} \isom A_{2}^{+}\)
	where \(A_{2} := W(\residue)[[x_{1}]] \).
	So we have a surjective ring map 
	\(A^{+} \to A_{2}^{+}\).
	Now, as \(p\)-adic completion preserves surjections,
	we get a surjection \(\widehat{A^{+}} \to \widehat{A_{2}^{+}}\).
	By the above theorem, 
	\(\widehat{A_{2}^{+}}\) has uncountable Krull dimension,
	pull back the ideals.
\end{proof}

The motivation for this conjecture is due to the following classical 
theorem:

\begin{thm}
	[Newton-Puiseux, see \cite{AbhyankarSciEng} Lecture 12]
	Let \(K\) be an algebraically closed field of characteristic zero.
	Then \(\overline{K((X))} \isom K\puiseuxxrat\).
	Moreover, if
	\(R = \mathcal{O}_{K}[[x]]\),
	then \(R^{+} \ins K\puiseuxx\).
\end{thm}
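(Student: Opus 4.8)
The plan is to prove the field-theoretic statement $\overline{K((x))} \isom K\puiseuxxrat$ first, and then obtain $R^{+} \ins K\puiseuxx$ from it by a short normalization argument.

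For the first statement, the two easy parts are that $K\puiseuxxrat = \bigcup_{n} K((x^{1/n}))$ is a field (a directed union of the fields $K((x^{1/n})) \isom K((y))$ under $y \mapsto x^{1/n}$) and that it is algebraic over $K((x))$ (each $K((x^{1/n}))$ is finite of degree $n$ over $K((x))$). So everything comes down to showing $K\puiseuxxrat$ is algebraically closed, i.e. that every finite extension $L/K((x))$ lies inside some $K((x^{1/e}))$. I would prove this using ramification theory of the complete discretely valued field $K((x))$: since $\operatorname{char} K = 0$ the extension $L/K((x))$ is separable, so the integral closure $B$ of $K[[x]]$ in $L$ is a module-finite complete DVR and $ef = [L:K((x))]$, with $e$ the ramification index and $f$ the residue degree. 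The residue field of $K[[x]]$ is $K$, which is algebraically closed, so $f = 1$ and $L/K((x))$ is totally ramified of degree $e$; since the residue characteristic is $0$, this extension is automatically tamely ramified. As $\mu_{e} \ins K \ins K((x))$, the structure theorem for tame totally ramified extensions gives $L = K((x))(\varpi^{1/e})$ for a uniformizer $\varpi$ of $K[[x]]$. Writing $\varpi = ux$ with $u \in K[[x]]^{\times}$ and extracting $u^{1/e} \in K[[x]]^{\times}$ by Hensel's lemma (valid since $K$ is algebraically closed of characteristic zero), we get $L = K((x))(x^{1/e}) = K((x^{1/e}))$. Since every algebraic element over $K((x))$ lies in a finite subextension, fixing a compatible system $x^{1/e} \in \overline{K((x))}$ shows $\overline{K((x))} = \bigcup_{e} K((x^{1/e})) = K\puiseuxxrat$.

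For the second statement, note $R = \mathcal{O}_{K}[[x]] = K[[x]]$ (the field $K$ carrying the trivial valuation), and $R^{+}$ is by definition the integral closure of $R$ in $\overline{\Frac(R)} = \overline{K((x))} \isom K\puiseuxxrat$. Given $\alpha \in R^{+}$, we have $\alpha \in K((x^{1/e}))$ for some $e$. Now $K[[x^{1/e}]]$ is a DVR with fraction field $K((x^{1/e}))$, hence integrally closed in it, and it is module-finite over $K[[x]]$ (free on $x^{i/e}$, $0 \le i < e$), hence integral over $K[[x]]$; by transitivity of integrality, the integral closure of $K[[x]]$ in $K((x^{1/e}))$ equals the integral closure of $K[[x^{1/e}]]$ in $K((x^{1/e}))$, which is $K[[x^{1/e}]]$ itself. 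Hence $\alpha \in K[[x^{1/e}]] \ins \bigcup_{e} K[[x^{1/e}]] = K\puiseuxx$, so $R^{+} \ins K\puiseuxx$.

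The main obstacle is the algebraic closedness of $K\puiseuxxrat$, and the hypothesis that does the work is $\operatorname{char} K = 0$: it is exactly what forbids wild ramification and defect, so that every finite extension of $K((x))$ is tamely totally ramified, hence visibly a Puiseux field $K((x^{1/e}))$. An equivalent, more hands-on route is the Newton--Puiseux algorithm: for monic $f \in K[[x]][T]$, read off from the Newton polygon the valuation $\gamma \in \mathbb{Q}_{>0}$ of a prospective root, solve the associated characteristic equation over the algebraically closed field $K$ to pin down the leading coefficient $\xi$, substitute $T \mapsto \xi x^{\gamma} + T'$, and iterate; in characteristic zero this terminates with all roots lying in $K((x^{1/d!}))$, $d = \deg f$. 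I would present the ramification-theoretic argument, as it is shorter; either way, the remaining steps (that the union is a field, the transitivity-of-integrality reduction) are routine.
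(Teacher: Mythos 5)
The paper does not prove this statement at all: it is quoted as a classical result with a citation to Abhyankar's Lecture 12, so there is no in-paper argument to compare against. Your proof is correct and is one of the two standard routes (the ramification-theoretic one rather than the Newton--Puiseux algorithm you sketch at the end): completeness of $K((x))$ kills the defect so $ef=[L:K((x))]$, algebraic closedness of the residue field $K$ forces $f=1$, residue characteristic zero forces tameness, and the structure theorem for totally tamely ramified extensions plus Hensel's lemma on the unit part of the uniformizer identifies $L$ with $K((x^{1/e}))$; the deduction of $R^{+}\ins K\puiseuxx$ via the integral closedness of the DVR $K[[x^{1/e}]]$ and transitivity of integrality is also right. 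One small interpretive caveat: you read $\mathcal{O}_{K}$ as $K$ itself (trivial valuation), whereas in the paper's surrounding context $\mathcal{O}_{K}$ is more plausibly the ring of integers of a nonarchimedean algebraically closed field. This costs nothing: any $\alpha\in R^{+}$ is integral over $\mathcal{O}_{K}[[x]]\ins K[[x]]$, hence integral over $K[[x]]$, and your second argument then places it in $\bigcup_{n}K[[x^{1/n}]]=K\puiseuxx$; it would be worth one sentence making that reduction explicit rather than assuming $\mathcal{O}_{K}=K$.
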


This gives a way to explicitly write down elements of \(A^{+}\) in
the dimension 2 case
(note that by the Cohen Structure Theorem it is enough to show 
Conjecture \ref{conj:rplus:uncountable} 
for \(A = R\)). 
One might hope to leverage this to answer Conjecture \ref{conj:rplus:uncountable}.
However, with our machinery so far we are only able to show the following.

\begin{prop}
	Let \(K\) be an algebraically closed field with residue characteristic \(p\).
	Then the \(p\)-adic completion of
	\(S = \mathcal{O}_{K}\puiseuxx \) has uncountable Krull dimension.
	Moreover \(\widehat{S}[1 / p]\) has uncountable Krull dimension.
\end{prop}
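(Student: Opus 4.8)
The plan is to imitate the now-familiar template: exhibit a Newton polygon formalism on $S = \mathcal{O}_K\puiseuxx$ arising from multiplicative $x$-adic Gauss norms, extend it to the $p$-adic completion, check that the formalism discretely approximates every convex function with limit zero, and invoke Theorem \ref{thm:du:uncountable}. Concretely, since $K$ is algebraically closed of residue characteristic $p$, its spherical completion is $W(\residue)\malcevprat$ (resp. $\residue\malcevtrat$ in equal characteristic), so we obtain a chain of inclusions $S = \mathcal{O}_K\puiseuxx \inj \mathcal{O}_{K^{sp}}\puiseuxx \inj W(\residue)\puiseuxx\malcevp$ (resp. the $t$-adic version), where in the last ring the roles of the Puiseux variable $x$ and the Mal'cev--Neumann variable are genuinely distinct. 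By Lemma \ref{lem:puiseux:uncountable} the outermost ring has a Newton polygon formalism and has uncountable Krull dimension, and by Lemma \ref{lem:npf:injection} the formalism restricts to $S$ (and to its $p$-adic completion, using Lemma \ref{lem:npf:completion}).

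The key point, exactly as in the proof of Theorem \ref{thm:integraltate:uncountable}, is that under this embedding the pseudo-uniformizer ($p$, or a chosen $t$ in equal characteristic) has a preimage \emph{in} $S$ while the $x$-adic valuation remains nondiscrete on the preimage of $\residue\puiseuxx$ — we have all $n$-th roots of $x$ available, so we can pick coefficients of arbitrary $x$-adic valuation in a cofinal subset of the value group. This is precisely what is needed for the formalism to discretely approximate an arbitrary convex function with limit zero: given such a function and its discrete approximation $g$, choose elements $a_i$ (monomials $x^{g(i)}$, up to approximation by Puiseux exponents) with $v(a_i)$ as close to $g(i)$ as desired, and form $\sum_i a_i (\text{unif})^i$, which converges because we have passed to the $p$-adically (resp. $t$-adically) complete ring. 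One checks that its Newton polygon and the secant slopes are asymptotically arbitrarily close to $g$, as in the proof of the Kang--Park corollary.

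Having established that $\widehat{S}$ has uncountable Krull dimension via Theorem \ref{thm:du:uncountable}, the statement about $\widehat{S}[1/p]$ follows by the same argument as Corollary \ref{cor:field:uncountable}: the uncountable chain of primes $\mathfrak{p}_\lambda$ produced by the Kang--Park construction all lie inside the prime ideal $\mathfrak{m}$ of series all of whose coefficients have positive $x$-adic valuation, and $p$ (having a unit coefficient, namely $1$, in its expansion) is not in $\mathfrak{m}$, hence not in any $\mathfrak{p}_\lambda$; therefore the chain survives in the localization $\widehat{S}[1/p]$.

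The main obstacle I anticipate is bookkeeping rather than conceptual: one must verify that $\mathcal{O}_K$ (for $K$ algebraically closed, not necessarily spherically complete) really does embed into $W(\residue)\puiseuxx\malcevp$ compatibly with the chosen uniformizer — this uses the explicit description of the spherical completion from \cite{PoonThesis} together with an automorphism of $\residue\malcevtrat$ (as in \cite{KedPoon}, Theorem 3.3) sending the pseudo-uniformizer to $t$ — and that the $x$-adic Gauss norms on the Puiseux--Mal'cev--Neumann ring are genuinely multiplicative, which is Corollary \ref{cor:malcevp:mult} applied with $V = W(\residue)\puiseuxx$ after checking $V$ embeds into $W(V/p)$ and has a family of $p$-adic valuations (done in Lemma \ref{lem:puiseux:uncountable}). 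Everything else is a verbatim transcription of arguments already in place.
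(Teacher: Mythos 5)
Your proposal is correct and follows essentially the same route as the paper: embed $\mathcal{O}_K\puiseuxx$ into the Mal'cev--Neumann ring over $W(\residue)\puiseuxx$ via the spherical completion, invoke Lemma \ref{lem:puiseux:uncountable} together with Lemmas \ref{lem:npf:injection} and \ref{lem:npf:completion} to get a Newton polygon formalism on the $p$-adic completion that discretely approximates every convex function with limit zero, conclude by Theorem \ref{thm:du:uncountable}, and then invert $p$ exactly as in Corollary \ref{cor:field:uncountable}. If anything, your writeup is slightly more careful than the paper's about the equal-characteristic case and about how $\mathcal{O}_K$ for a general algebraically closed $K$ (not just $\mathbb{C}_p$) embeds compatibly with the uniformizer.
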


\begin{proof}


	We first show that 
	\(\mathcal{O}_{K}\puiseuxx\)
	has uncountable Krull dimension.
	As before, we can embed into a spherical completion,
	letting \(\residue = \overline{\mathbb{F}_{p}}\)
	and \(\Gamma = \mathbb{Q}\), we get
	\[
	\mathcal{O}_{\mathbb{C}_{p}}\puiseuxx
	\inj
	\residue\malcevp\puiseuxx
	.\] 
	We may now use the same argument as for the perfectoid
	Tate algebra, with Lemma \ref{lem:coperf:uncountable}
	replaced by Lemma \ref{lem:puiseux:uncountable}.
	We need only to show that 
	the Mal'cev-Neumann series variable \(p\) pulls back
	and that the norm remains nondiscrete on coefficients.
	But these both follow as before,
	and these two things with \(p\)-adic completeness
	together imply that the \(p\)-adic completion
	of the ring on the left has a Newton polygon 
	formalism which discretely approximates
	any function with limit zero.
	By Theorem \ref{thm:du:uncountable}, we conclude.

	Finally, we may use the same argument as 
	\ref{cor:field:uncountable} to invert \(p\).
\end{proof}

\subsection{The case of \(A_{\infty}\)}

We now consider the ring \(A_{\infty}\) as defined
in the paper \cite{PerfectoidSignature}.
Recall the construction, 
let \(A = W(k)[[x_{2}, \ldots, x_{d}]]\) for
\(k\) a perfect field of characteristic \(p\).
Then
\[
	A_{\infty,d} = A[p^{1 / p^{\infty}}, x_{2}^{1 / p^{\infty}}, \ldots, x_{d}^{1 / p^{\infty}}]
.\] 

\begin{thm}
	\(A_{\infty,2}\) has uncountable Krull dimension.
\end{thm}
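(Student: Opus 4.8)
The plan is to run the argument of Theorem \ref{thm:integraltate:uncountable} against a different target ring: produce a ``nonstandard'' Newton polygon formalism on $A_{\infty,2}$ by embedding it into a $p$-adic Mal'cev--Neumann ring for which such a formalism has already been built, and then invoke Theorem \ref{thm:du:uncountable}. Set
\[
	V \;=\; W(k)[[x_2^{1 / p^{\infty}}]],
\]
which is exactly the ring $V$ appearing in Lemma \ref{lem:coperfxpow:uncountable}, with perfect residue field $k$ (already perfect, so no enlargement is needed) and series variable $x_2$. Fixing value group $\Gamma = \mathbb{Z}[1/p]$, there is a ring injection
\[
	A_{\infty,2} = W(k)[[x_2]]\bigl[\,p^{1 / p^{\infty}},\ x_2^{1 / p^{\infty}}\,\bigr] \inj V\malcevp,
\]
sending $W(k)[[x_2]]$ and each $x_2^{1/p^n}$ into $V \inj V\malcevp$ (the structure map being injective by Lemma \ref{lem:malcevp:unique}, since $V/p = k[[x_2^{1/p^\infty}]]$ is perfect) and each $p^{1/p^n}$ to the monomial $p^{1/p^n}\in V\malcevp$; well-definedness and injectivity follow from uniqueness of Mal'cev--Neumann expansions (Lemma \ref{lem:malcevp:unique}) just as in the earlier examples. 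Since $A_{\infty,2}$ is $p$-adically complete (by its construction in \cite{PerfectoidSignature} as a perfectoid ring; equivalently, one first $p$-adically completes $A[p^{1/p^\infty},x_2^{1/p^\infty}]$), this extends to an injection $A_{\infty,2} \inj \widehat{V\malcevp}$ into the $p$-adic completion.

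By Lemma \ref{lem:coperfxpow:uncountable} (via Proposition \ref{prop:constr:npf} and Lemma \ref{lem:npf:completion}), $\widehat{V\malcevp}$ carries a Newton polygon formalism with Newton polygon variable $p$ and nondiscrete coefficient valuation (the $x_2$-adic valuation on $V$), and this formalism discretely approximates every convex $G \in \contposreal$ with $\lim_{i\to\infty} g(i) = 0$. By Lemma \ref{lem:npf:injection} the formalism restricts to $A_{\infty,2}$, and to conclude via Theorem \ref{thm:du:uncountable} we need only check, exactly as in the proof of Theorem \ref{thm:integraltate:uncountable}, that the restricted formalism still discretely approximates every such $G$. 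This reduces to two observations: the Newton polygon variable $p$ lies in $A_{\infty,2}$, and the coefficient valuation remains nondiscrete on the image because the elements $x_2^{1/p^n}\in A_{\infty,2}$ have $x_2$-adic valuation $1/p^n \to 0$. Granting these, the elements $g_\mu$ realizing the Newton polygons asymptotic to $\leg^{-1}(t^\mu)$ can be taken of the shape $\sum_i x_2^{\delta_i} p^{i}$ with $\delta_i \in \mathbb{Z}[1/p]_{\ge 0}$ approximating the required values, and these indeed lie in $A_{\infty,2}$.

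The one point genuinely requiring attention — the analogue of the ``completeness'' ingredient present in all of Du's examples — is that the $g_\mu$ are infinite ($p$-adically convergent) sums, so it is essential that $A_{\infty,2}$ be $p$-adically complete rather than merely the (non-complete) ring $A[p^{1/p^\infty},x_2^{1/p^\infty}]$; this is precisely the role of the completion in Lemma \ref{lem:coperfxpow:uncountable}. Everything else — that the displayed map is a well-defined ring homomorphism, that the $p$-adic topology $A_{\infty,2}$ inherits from $\widehat{V\malcevp}$ is its own, and that the nondiscreteness bookkeeping goes through — is routine and parallels Sections 3--5 verbatim.
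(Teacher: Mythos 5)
Your proposal is correct and is essentially the paper's own argument: the paper's proof is literally ``use the same proof as Theorem \ref{thm:integraltate:uncountable}, but with Lemma \ref{lem:coperfxpow:uncountable} in place of Lemma \ref{lem:coperf:uncountable},'' and your embedding $A_{\infty,2}\inj \widehat{V\malcevp}$ with $V=W(k)\coperfxpow$, together with the checks that $p$ pulls back and the $x_2$-adic valuation stays nondiscrete on the image, is exactly the intended instantiation. Your explicit remarks on $p$-adic completeness and on where the $g_\mu$ live fill in details the paper leaves implicit, but do not change the route.
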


\begin{proof}
	Use the same proof as 
	Theorem \ref{thm:integraltate:uncountable},
	but with Lemma \ref{lem:coperfxpow:uncountable} 
	instead of Lemma \ref{lem:coperf:uncountable}.
	%
	%
\end{proof}

\begin{cor}
	Let \(2 \leq d\). Then
	\(A_{\infty, d}\) has uncountable Krull dimenison.
\end{cor}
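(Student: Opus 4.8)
The plan is to reduce to the case $d = 2$, which is the content of the theorem just proved, by exactly the argument used in Corollary \ref{cor:extend:triv}. Write $A = W(k)[[x_2, \ldots, x_d]]$ as in the construction of $A_{\infty,d}$, and consider the $W(k)$-algebra homomorphism $A \to W(k)[[x_2]]$ that fixes $x_2$ and sends $x_j \mapsto 0$ for $3 \le j \le d$; this is a surjection of $p$-adically complete rings. Since $0$ admits compatible $p$-power roots (namely $0$ itself), this map extends to a surjective ring homomorphism
\[
A_{\infty,d} = A[p^{1/p^\infty}, x_2^{1/p^\infty}, \ldots, x_d^{1/p^\infty}] \surj W(k)[[x_2]][p^{1/p^\infty}, x_2^{1/p^\infty}] = A_{\infty,2},
\]
which fixes all $p$-power roots of $p$ and of $x_2$ and sends $x_j^{1/p^n} \mapsto 0$ for $j \ge 3$.

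By the preceding theorem, $A_{\infty,2}$ has an uncountable chain of distinct prime ideals. I would then pull this chain back along the surjection above: for a surjection $\pi \colon R \surj S$, the assignment $\mathfrak{q} \mapsto \pi^{-1}(\mathfrak{q})$ is an inclusion-preserving bijection from the primes of $S$ onto the primes of $R$ containing $\ker \pi$, so an uncountable chain of distinct primes in $A_{\infty,2}$ yields an uncountable chain of distinct primes in $A_{\infty,d}$. Hence $A_{\infty,d}$ has uncountable Krull dimension.

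The only point that genuinely requires verification is that the evaluation map descends to the root-adjoined (perfectoidized) ring and remains surjective; both are immediate from the explicit presentation of $A_{\infty,d}$ in \cite{PerfectoidSignature} as the ring obtained from $A$ by adjoining the indicated $p$-power roots, since the map respects all the relations among those roots. I do not expect any real obstacle here — all of the substance is already contained in the $d = 2$ case.
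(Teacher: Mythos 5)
Your proposal is correct and matches the paper's proof exactly: the paper likewise invokes the surjection $A_{\infty,d} \to A_{\infty,2}$ obtained by killing $x_3,\ldots,x_d$ (and their $p$-power roots) and pulls back the uncountable chain of primes from the $d=2$ case. Your additional remarks on why the evaluation map descends to the root-adjoined ring and why preimages of a chain of distinct primes remain distinct are just the details the paper leaves implicit.
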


\begin{proof}
	As with the perfectoid tate algebra, 
	we have a surjection \(A_{\infty,d} \to A_{\infty,2}\).
\end{proof}

\bibliographystyle{alpha}
\bibliography{main}

\newcommand{\etalchar}[1]{$^{#1}$}
\begin{thebibliography}{BCKW19}

\bibitem[Abh90]{AbhyankarSciEng}
Shreeram~Shankar Abhyankar.
\newblock {\em Algebraic geometry for scientists and engineers}.
\newblock Mathematical surveys and monographs, number 35. American Mathematical
  Society, Providence, R.I, 1990 - 1990.

\bibitem[And18]{AndreDirectSummand}
Yves Andr{\'e}.
\newblock La conjecture du facteur direct.
\newblock {\em Publications math{\'e}matiques de l'IH{\'E}S}, 127(1):71--93,
  2018.

\bibitem[BCKW19]{AWSNotes}
Bhargav Bhatt, Ana Caraiani, Kiran~S. Kedlaya, and Jared Weinstein.
\newblock {\em Perfectoid spaces}, volume 242 of {\em Mathematical Surveys and
  Monographs}.
\newblock American Mathematical Society, Providence, RI, 2019.
\newblock Lectures from the 2017 Arizona Winter School, held in Tucson, AZ,
  March 11--17, Edited and with a preface by Bryden Cais, With an introduction
  by Peter Scholze.

\bibitem[BIM19]{BhattIyengarMa}
Bhargav Bhatt, Srikanth~B. Iyengar, and Linquan Ma.
\newblock Regular rings and perfect(oid) algebras.
\newblock {\em Communications in Algebra}, 47(6):2367--2383, 2019.

\bibitem[CLM{\etalchar{+}}23]{PerfectoidSignature}
Hanlin Cai, Seungsu Lee, Linquan Ma, Karl Schwede, and Kevin Tucker.
\newblock Perfectoid signature, perfectoid hilbert-kunz multiplicity, and an
  application to local fundamental groups, 2023.

\bibitem[CS17]{CarianiScholze1}
Ana Caraiani and Peter Scholze.
\newblock On the generic part of the cohomology of compact unitary {S}himura
  varieties.
\newblock {\em Ann. of Math. (2)}, 186(3):649--766, 2017.

\bibitem[Du20]{DuUncountable}
Heng Du.
\newblock $\mathbf{A}_{\text {inf}}$ has uncountable krull dimension, 2020.

\bibitem[FF18]{FFCourbes}
Laurent Fargues and Jean-Marc Fontaine.
\newblock Courbes et fibr\'{e}s vectoriels en th\'{e}orie de {H}odge
  {$p$}-adique.
\newblock {\em Ast\'{e}risque}, (406):xiii+382, 2018.
\newblock With a preface by Pierre Colmez.

\bibitem[FS21]{FarguesScholze}
Laurent Fargues and Peter Scholze.
\newblock Geometrization of the local langlands correspondence, 2021.

\bibitem[Hei22]{HeitmannRplus}
Raymond~C. Heitmann.
\newblock {$\widehat {R^+}$} is surprisingly an integral domain.
\newblock {\em J. Pure Appl. Algebra}, 226(1):Paper No. 106809, 7, 2022.

\bibitem[Hub96]{HuberAdicSpaces}
Roland Huber.
\newblock {\em \'{E}tale cohomology of rigid analytic varieties and adic
  spaces}.
\newblock Aspects of Mathematics, E30. Friedr. Vieweg \& Sohn, Braunschweig,
  1996.

\bibitem[Hun11]{HunekeRplusSurvey}
Craig Huneke.
\newblock Absolute integral closure.
\newblock In {\em Commutative algebra and its connections to geometry}, volume
  555 of {\em Contemp. Math.}, pages 119--135. Amer. Math. Soc., Providence,
  RI, 2011.

\bibitem[Kap42]{KaplanskyMaximal1}
Irving Kaplansky.
\newblock {Maximal fields with valuations}.
\newblock {\em Duke Mathematical Journal}, 9(2):303 -- 321, 1942.

\bibitem[Kap45]{KaplanskyMaximal2}
Irving Kaplansky.
\newblock Maximal fields with valuations. {II}.
\newblock {\em Duke Math. J.}, 12:243--248, 1945.

\bibitem[KL15]{KedlayaLiu1}
Kiran~S. Kedlaya and Ruochuan Liu.
\newblock Relative {$p$}-adic {H}odge theory: foundations.
\newblock {\em Ast\'{e}risque}, (371):239, 2015.

\bibitem[KP05]{KedPoon}
Kiran~S. Kedlaya and Bjorn Poonen.
\newblock Orbits of automorphism groups of fields.
\newblock {\em Journal of Algebra}, 293(1):167--184, 2005.

\bibitem[KP13]{KangPark}
B.G. Kang and M.H. Park.
\newblock Krull-dimension of the power series ring over a nondiscrete valuation
  domain is uncountable.
\newblock {\em Journal of Algebra}, 378:12--21, 2013.

\bibitem[LL21]{LangLudwig}
Jaclyn Lang and Judith Ludwig.
\newblock {$\Bbb{A}_{\inf}$} is infinite dimensional.
\newblock {\em J. Inst. Math. Jussieu}, 20(6):1983--1989, 2021.

\bibitem[MS21]{MaSchwedeBCM}
Linquan Ma and Karl Schwede.
\newblock {Singularities in mixed characteristic via perfectoid big
  Cohen–Macaulay algebras}.
\newblock {\em Duke Mathematical Journal}, 170(13):2815 -- 2890, 2021.

\bibitem[Poo93]{PoonThesis}
Bjorn Poonen.
\newblock Maximally complete fields.
\newblock {\em Enseign. Math. (2)}, 39(1-2):87--106, 1993.

\bibitem[Sch12]{ScholzePerfSpaces}
Peter Scholze.
\newblock Perfectoid {Spaces}.
\newblock {\em Publications Math\'ematiques de l'IH\'ES}, 116:245--313, 2012.

\bibitem[Sch13]{ScholzeHodgeRigid}
Peter Scholze.
\newblock {$p$}-adic {H}odge theory for rigid-analytic varieties.
\newblock {\em Forum Math. Pi}, 1:e1, 77, 2013.

\bibitem[Sch14]{ScholzeSurvey}
Peter Scholze.
\newblock Perfectoid spaces and their applications.
\newblock In {\em Proceedings of the {I}nternational {C}ongress of
  {M}athematicians---{S}eoul 2014. {V}ol. {II}}, pages 461--486. Kyung Moon Sa,
  Seoul, 2014.

\bibitem[Sch15]{ScholzeTorsion}
Peter Scholze.
\newblock On torsion in the cohomology of locally symmetric varieties.
\newblock {\em Ann. of Math. (2)}, 182(3):945--1066, 2015.

\end{thebibliography}

\end{document}